\pgfplotsset{width=7cm,compat=1.8}
\newcommand{\bk}{\textbf{A}}
\newcommand{\bb}{\textbf{b}}
\newcommand{\R}{\mathbb{R}}
\newcommand{\ve}{V_h(P)}
\newcommand{\p}{\mathcal{P}}
\newcommand{\T}{\mathcal{T}}
\newcommand{\e}{\mathcal{E}}
\newcommand{\tT}{\widehat{{\cal T}}}
\newcommand{\pid}{\Pi^\nabla}
\newcommand{\bn}{\textbf{n}}
\newcommand{\po}{\Pi_0}
\def\h{\hspace{1mm}}
\newcommand{\dv}{\text{div}}
\newcommand{\ol}{\overline}
\newcommand{\pw}{\mathrm{pw}}
\newcommand{\s}{\mathrm{s}}
\newcommand{\ds}{\displaystyle}
\newcommand{\bs}{\bm{\sigma}}
\newcommand{\PF}{\mathrm{PF}}
\newcommand{\F}{\mathrm{F}}
\newcommand{\NC}{\mathrm{NC}}
\def\Xint#1{\mathchoice
	{\XXint\displaystyle\textstyle{#1}}%
	{\XXint\textstyle\scriptstyle{#1}}%
	{\XXint\scriptstyle\scriptscriptstyle{#1}}%
	{\XXint\scriptscriptstyle\scriptscriptstyle{#1}}%
	\!\int}
\def\XXint#1#2#3{{\setbox0=\hbox{$#1{#2#3}{\int}$ }
		\vcenter{\hbox{$#2#3$ }}\kern-.6\wd0}}
\def\dashint{\Xint-}
\newtheorem{propn}{Proposition}[section]
\newtheorem{thm}[propn]{Theorem}
\newtheorem{lemma}[propn]{Lemma}
\newtheorem*{lemma*}{Lemma}
\newtheorem{cor}[propn]{Corollary}
\newtheorem*{thm*}{Theorem}
\theoremstyle{definition}
\newtheorem{defn}[propn]{Definition}
\newtheorem{example}{Example}[section]
\newtheorem{rem}{Remark}%[section]
\title{A priori and a posteriori error analysis of the lowest-order NCVEM for  second-order linear indefinite elliptic problems}
\author{Carsten Carstensen\thanks{ Department of Mathematics, Humboldt-Universit\"{a}t zu Berlin, 10099 Berlin, Germany.
	Email: cc@math.hu-berlin.de},  Rekha Khot\thanks{Department of Mathematics, Indian Institute of Technology Bombay, Powai, Mumbai, 400076. Email: rekhamp@math.iitb.ac.in, akp@math.iitb.ac.in} \h and Amiya K. Pani\footnotemark[2]}
\begin{document}
	\maketitle
	\begin{abstract}
		The  nonconforming virtual element method (NCVEM) for the approximation of the weak solution %$u$ 
		to a general linear second-order non-selfadjoint  indefinite elliptic PDE in a polygonal domain $\Omega$ is analyzed under reduced elliptic regularity. The main tool in the \textit{a priori}  error analysis is the connection between the nonconforming virtual element space and the Sobolev space $H^1_0(\Omega)$ by a right-inverse $J$ of the interpolation operator $I_h$.  The stability of the discrete solution allows for the proof of existence of a unique discrete solution, of a discrete inf-sup estimate and, consequently, for optimal error estimates in the $H^1$ and $L^2$ norms. The explicit residual-based \textit{a posteriori} error estimate for the NCVEM is reliable and efficient up to the oscillation terms. Numerical experiments on different types of polygonal meshes illustrate the robustness of an error estimator and support the improved convergence rate of an adaptive mesh-refinement  in comparison to the uniform mesh-refinement.
	\end{abstract}
\noindent
\textbf{Keywords}: second-order linear indefinite elliptic problems,  virtual elements,  nonconforming, \par polytopes,  enrichment, stability, \textit{a priori} error estimates, a residual-based \textit{a posteriori} error \par estimate, adaptive mesh-refinement. 
\\
\\
\textbf{AMS subject classifications}: 65N12, 65N15, 65N30, 65N50.	
		\numberwithin{equation}{section}
		\numberwithin{figure}{section}
		\section{Introduction}
		The nonconforming virtual element method  approximates the weak solution $u\in H^1_0(\Omega)$ to the second-order linear elliptic boundary value problem 
		\begin{align}
		{\cal L} u:=-\dv(\bk \nabla u + \bb u)+\gamma u = f \quad\mbox {in}\quad\Omega\label{1}
		\end{align}
		for a given $f\in L^2(\Omega)$ in a bounded polygonal Lipschitz domain $\Omega \subset {\R}^2$ subject to homogeneous Dirichlet boundary conditions.
		\subsection{General introduction}
		The virtual element method (VEM) introduced in \cite{1} is one of the well-received polygonal methods for  approximating the  solutions to partial differential equations (PDEs) in the continuation of the mimetic finite difference method \cite{da2014mimetic}. This method is becoming increasingly popular \cite{9,4,3,6,5,2} for its ability to deal with  fairly general polygonal/polyhedral meshes. On the account of its versatility in shape of polygonal domains, the local finite-dimensional space (the space of shape functions) comprises non-polynomial functions. The novelty of this approach lies in the fact that it does not demand for the explicit construction of non-polynomial functions and the knowledge of degrees of freedom along with suitable projections onto polynomials is sufficient to implement the method. \par
		Recently,  Beir{\~a}o da Veiga \textit{et al.} discuss a conforming VEM for the indefinite problem (\ref{1}) in \cite{4}. Cangiani \textit{et al.} \cite{5} develop a nonconforming VEM under the additional condition 
		\begin{align}
		0\leq\gamma-\frac{1}{2}\dv(\bb),\label{1.1}
		\end{align}
		which makes the bilinear form coercive and significantly simplifies the analysis.
		The two papers \cite{4,5} prove \textit{a priori} error estimates for a solution $u\in H^2(\Omega)\cap H^1_0(\Omega)$ in a convex domain $\Omega$. 	The \textit{a priori} error analysis for the nonconforming VEM  in \cite{5} can be extended to the case when the exact solution $u\in H^{1+\sigma}(\Omega)\cap H^1_0(\Omega)$ with $\sigma>1/2$ as it is based on  traces. This paper shows it for all $\sigma>0$ and circumvents any trace inequality.  Huang \textit{et al.} \cite{HUANG2021113229} discuss \textit{a priori} error analysis of the nonconforming VEM  applied to Poisson and Biharmonic problems for $\sigma>0$. 
		  An \textit{a posteriori} error estimate in \cite{6}  explores  the conforming VEM for (\ref{1}) under the assumption (\ref{1.1}). There are a few contributions \cite{6,da2015residual,mora2015virtual} on residual-based \textit{a posteriori} error control for the conforming VEM. This paper presents \textit{a priori} and \textit{a posteriori} error estimates for the nonconforming VEM without (\ref{1.1}), but under the assumption that the Fredholm operator ${\cal L}$ is injective.
	\par
		 %The medius analysis \cite{12,8,gudi2010new} effectively uses an enrichment operator in bridging the nonconforming virtual element space and the Sobolev space $H^1_0(\Omega)$.  
		\subsection{Assumptions on (\ref{1})}
		This paper solely imposes the following assumptions \ref{A1}-\ref{A3} on the coefficients $\bk, \bb, \gamma$ and the operator ${\cal L}$ in (\ref{1}) with $f\in L^2(\Omega)$.
		\begin{enumerate}[label=(\textbf{A\arabic*})]
			\item\label{A1} The coefficients $\bk_{jk}, \bb_{j},\gamma$ for $j,k=1,2$ are piecewise Lipschitz continuous functions.  For any decomposition $\cal{T}$ (admissible in the sense of Subsection $2.1$) and any polygonal domain $P\in\cal{T}$, the coefficients $\bk, \bb, \gamma$  are bounded pointwise a.e. by $\|\bk\|_{\infty}, \|\bb\|_{\infty}, \|\gamma\|_{\infty}$ and their piecewise first derivatives by $|\bk|_{1,\infty},|\bb|_{1,\infty}, |\gamma|_{1,\infty}$.
			\item \label{A2} There exist  positive constants $a_0$ and $a_1$ such that, for a.e. $x\in \Omega$, $\bk(x)$ is SPD and 
			\begin{align}
			a_0|\xi|^2 \leq \sum_{j,k=1}^{2} \bk_{jk}(x)\xi_{j}\xi_{k}\leq a_1|\xi|^2\quad\text{for all}\; \xi \in {\R}^2. \label{2}
			\end{align}
			\item \label{A3}The linear operator ${\cal L}:H^{1}_0(\Omega)\to H^{-1}(\Omega)$ is injective, {\it i.e.}, zero is not an eigenvalue of ${\cal L}$ .
		\end{enumerate}	
	Since the bounded linear operator ${\cal L}$ is a Fredholm operator \cite[p.~321]{Evans}, \ref{A3} implies that ${\cal L}$ is bijective with bounded inverse ${\cal L}^{-1}:H^{-1}(\Omega)\to H^1_0(\Omega)$.  The Fredholm theory also entails the existence of a unique solution to the adjoint problem, that is, for every $g\in L^2(\Omega)$, there exists a unique solution $\Phi\in H^1_0(\Omega)$ to
	\begin{align}
	{\cal L}^*\Phi:=-\dv(\bk\nabla \Phi)+\bb\cdot\nabla \Phi+\gamma \Phi=g.\label{5}
	\end{align}
	The bounded polygonal Lipschitz domain $\Omega$, the homogeneous Dirichlet boundary conditions, and \ref{A1}-\ref{A2} lead to some $0<\sigma\leq 1$ and positive constants $C_{\text{reg}}$ and $C^*_{\text{reg}}$ (depending only on $\sigma, \Omega$ and coefficients of ${\cal L}$)  such that, for any $f,g\in L^2(\Omega)$, the unique solution $u$ to (\ref{1}) and the unique solution $\Phi$ to (\ref{5}) belong to $H^{1+\sigma}(\Omega)\cap H^1_0(\Omega)$ and satisfy
	\begin{align}
	\|u\|_{1+\sigma,\Omega}\leq C_{\text{reg}}\|f\|_{L^2(\Omega)}\;
	\text{	and}\;\;
	\|\Phi\|_{1+\sigma,\Omega}\leq C^*_{\text{reg}}\|g\|_{L^2(\Omega)}.\label{6}
	\end{align}
	(The restriction $\sigma\leq 1$ is for convenience owing to the limitation to  first-order convergence of the scheme.)
	\subsection{Weak formulation}
	Given the coefficients $\bk, \bb,\gamma$ with \ref{A1}-\ref{A2}, define, for all $u,v\in V:=H^1_0(\Omega)$, 
	\begin{equation}
	a(u,v):=(\bk \nabla u,\nabla v)_{L^2(\Omega)},\hspace{0.5cm} b(u,v):=(u,\bb \cdot\nabla v)_{L^2(\Omega)},\hspace{0.5cm}c(u,v):=(\gamma u, v)_{L^2(\Omega)}\label{7}
	\end{equation}
	and 
	\begin{equation}
	B(u,v):=a(u,v)+b(u,v)+c(u,v)\label{8}
	\end{equation}
	(with piecewise versions $a_{\pw}, b_{\pw}, c_{\pw}$ and $ B_{\pw}$ for $\nabla$ replaced by the piecewise gradient $\nabla_{\pw}$ and local contributions $a^P, b^P , c^P$ defined in Subsection~3.1 throughout this paper). The weak formulation of the problem (\ref{1}) seeks $u\in V$ such that
	\begin{equation}
	B(u,v) = (f,v) \quad\text{for all}\; v \in V.\label{9}
	\end{equation}
	Assumptions \ref{A1}-\ref{A3} imply that the bilinear form $B(\cdot,\cdot)$ is continuous and satisfies an inf-sup condition \cite{braess2007finite}
	\begin{align}
	 0<\beta_0:=\inf_{0\neq v\in V}\sup_{0\neq w\in V}\frac{B(v,w)}{\|v\|_{1,\Omega}\|w\|_{1,\Omega}}.\label{9.1}
	\end{align}
	\subsection{Main results and outline}
	Section $2$ introduces the VEM and guides the reader to the first-order nonconforming VEM on polygonal meshes. It explains the continuity of the interpolation operator and related error estimates in detail. Section $3$ starts with the discrete bilinear forms and their properties, followed by some preliminary estimates for the consistency error and the nonconformity error. The  nonconformity error uses a new conforming companion operator resulting in the well-posedness of the discrete problem for sufficiently fine meshes. Section $4$ proves the discrete inf-sup estimate and optimal \textit{a priori} error estimates. Section $5$ discusses both reliability and efficiency of an explicit residual-based \textit{a posteriori} error estimator. Numerical experiments in Section $6$ for three computational benchmarks illustrate the performance of an error estimator and show the improved convergence rate in   adaptive mesh-refinement.
	\subsection{Notation}
	Throughout this paper, standard notation applies to Lebesgue and Sobolev spaces $H^m$ with norm $\|
	\cdot\|_{m,\cal{D}}$ (resp. seminorm $|\cdot|_{m,\cal{D}}$) for $m>0$, while $(\cdot,\cdot)_{L^2({\cal D})}$ and $\|\cdot\|_{L^2({\cal D})}$  denote the $L^2$ scalar product and $L^2$ norm on a domain ${\cal D}$. The space $C^0(\cal D)$ consists of all continuous functions  vanishing on the boundary of a domain ${\cal D}$. The dual space of $H^1_0(\Omega)$ is denoted by $H^{-1}(\Omega)$ with dual norm $\|\cdot\|_{-1}$. An inequality $A\lesssim B$ abbreviates $A\leq CB$ for   a generic constant $C$,  that may depend on the coefficients of ${\cal L}$, the universal constants $\sigma$, $\rho$ (from \ref{M2} below), but that is independent of the mesh-size. Let $\p_k({\cal D})$ denote  the set of polynomials of degree at most $k\in\mathbb{N}_0$ defined on a domain ${\cal D }$ and let $\Pi_k$ denote the piecewise $L^2$ projection on $\p_k({\cal T})$ for any admissible partition $\T\in\mathbb{T}$ (hidden in the notation $\Pi_k$).  The notation $H^s(P):= H^s(\text{int}P)$ for a compact polygonal domain $P$ means the Sobolev space $H^s$ \cite{Evans} defined in the interior $\text{int}(P)$ of $P$ throughout this paper.  The outward normal derivative is denoted by $\frac{\partial\;\bullet}{\partial\bn_P}=\bn_P\cdot\nabla\bullet$ for the exterior  unit normal vector $\bn_P$ along the boundary $\partial P$ of the domain $P$.

	\section{First-order virtual element method on a polygonal mesh}
	This section describes class of admissible partitions of $\Omega$ into polygonal domains and the lowest-order nonconforming virtual element method for the problem (\ref{1}) \cite{5,2}. 
	\subsection{Polygonal meshes}
	A  polygonal domain $P$ in this paper is a non-void compact simply-connected set $P$ with polygonal boundary $\partial P $ so that $\text{int}(P)$ is a Lipschitz domain. The polygonal boundary $\partial P$ is a simple closed polygon described by a finite sequence of distinct points. The set ${\cal N}(\partial P)=\{z_1,z_2,\dots,z_J\}$ of nodes of a polygon $P$ is enumerated with $z_{J+1}:=z_1$ such that $E(j):=\text{conv}\{z_j,z_{j+1}\}$ defines an edge and all $J$ edges cover the boundary $\partial P=E(1)\cup\dots\cup E(J)$ with an intersection $E(j)\cap E(j+1)=\{z_{j+1}\}$ for $j=1,\dots,J-1$ and $E(J)\cap E(1)={z_1}$ with $\text{dist}(E(j),E(k)) >0$ for all distinct indices $j\neq k$.
	\bigskip
\\	Let $\mathbb{T}$ be a family of partitions of  $\overline{\Omega}$  into polygonal domains, which satisfies the  conditions \ref{M1}-\ref{M2} with a universal positive constant $\rho$.
	\begin{enumerate}[label={(\bfseries M\arabic*)}]
		\item \label{M1}Admissibility.  Any two distinct polygonal domains $P$ and $P'$ in $\T\in\mathbb{T}$ are disjoint or share  a  finite number of edges  or vertices.
		\begin{figure}[H]
				\centering
				\includegraphics[width=0.3\linewidth]{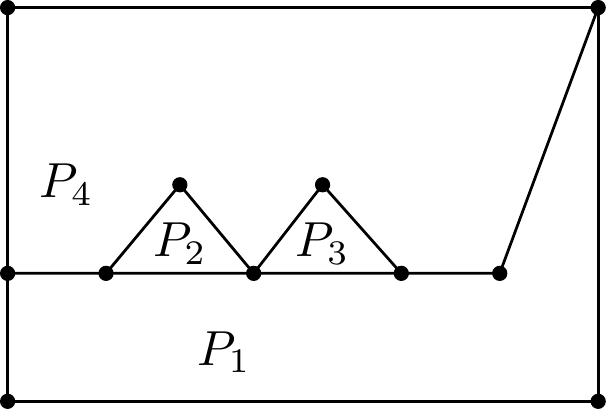}
				\caption{}
		\end{figure}
		\item \label{M2}Mesh regularity.   Every polygonal domain $P$ of diameter $h_P$ is star-shaped with respect to every point of a ball of radius greater than equal to $\rho h_P$ and every edge $E$ of $P$ has a length $|E|$ greater than equal to $\rho h_P$.
	\end{enumerate}
Here and throughout this paper,  $h_\T|_P:=h_P$ denotes the piecewise constant mesh-size   and   $\mathbb{T}(\delta):=\{\T\in\mathbb{T} : h_{\text{max}}\leq\delta\leq 1\}$ with  the maximum diameter $h_{\text{max}}$ of the polygonal domains in $\T$ denotes the subclass of partitions of $\overline{\Omega}$ into polygonal domains of maximal mesh-size $\leq\delta$. Let $|P|$ denote the area of polygonal domain $P$ and $|E|$ denote the length of an edge $E$. With a fixed orientation to a polygonal domain $P$, assign the outer unit normal $\bn_{P}$ along the boundary $\partial P$ and $\bn_E:=\bn_P|_{E}$ for an edge $E$ of $P$. Let $\e$ (resp. $\widehat{\e}$) denote the set of edges $E$ of $\T$ (resp. of $\tT$) and $\e(P)$ denote the set of edges of polygonal domain $P\in\T$.   For a polygonal domain $P$, define \begin{align*}
\text{mid}(P):=\frac{1}{|P|}\int_P x\,dx\quad\text{and}\quad\text{mid}(\partial P):=\frac{1}{|\partial P|}\int_{\partial P}x\, ds. \end{align*}\\ Let
$\p_k({\cal T}):=\{v\in L^2(\Omega):\forall P\in\T\quad v|_{P}\in\p_k(P)\}$ for $k\in\mathbb{N}_0$ and $\Pi_k$ denote the piecewise $L^2$ projection onto $\p_k({\cal T})$. The notation $\Pi_k$ hides its dependence on $\T$ and also assume $\Pi_k$ applies componentwise to vectors.
 Given a decomposition ${\cal T}\in\mathbb{T}$ of $\Omega$ and a function $f\in L^2(\Omega)$, its oscillation reads
\begin{align*}
\mathrm{osc}_k(f,P):= \|h_P(1-\Pi_k)f\|_{L^2(P)}\quad\text{and}\quad
\mathrm{osc}_k(f,{\cal T}):=\left(\sum_{P\in{\cal T}}\|h_P(1-\Pi_k)f\|_{L^2(P)}^2\right)^{\displaystyle\nicefrac{1}{2}}
\end{align*}
with $\mathrm{osc}(f,\bullet):=\mathrm{osc}_0(f,\bullet)$.
\begin{rem}[consequence of mesh regularity assumption]\label{2.4c}
%	\begin{enumerate}[label=$\left(\alph*\right)$]
		There exists an interior node $c$   in the sub-triangulation $\tT(P):=\{T(E)=\text{conv}(c,E): E\in\e(P)\}$ of a polygonal domain $P$  with $h_{T(E)}\leq h_P\leq C_{\text{sr}}h_{T(E)}$ as illustrated in  Figure \ref{tz}. Each polygonal domain $P$ can be divided into triangles so that the resulting sub-triangulation $\tT|_P:=\tT(P)$ of $\T$ is shape-regular. The minimum angle in the sub-triangulation solely depends on $\rho$  \cite[Sec.~2.1]{14}. %\textcolor{blue}{Note that if an assumption $h_E\geq\rho h_P$ on the edges $E\in\e(P)$ for $P\in\T$  is dropped, the sub-triangulation $\tT$ may not be shape-regular.  This allows anisotropic polygonal subdomains, but the analysis requires  a reined technique \cite{cao2019anisotropic} and hence, not studied in this paper for the simplicity of the presentation.}
		\begin{figure}[H]
			\centering
			\begin{subfigure}{.5\textwidth}
				\centering
				\includegraphics[width=0.5\linewidth]{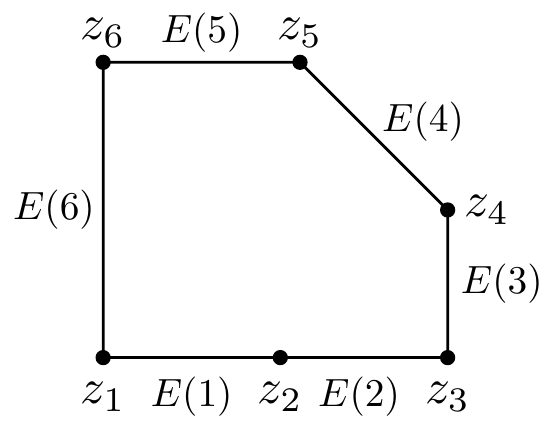}               % {mesh2}
				\caption{}
				\vspace{1cm}
				\label{d1}
			\end{subfigure}%
			\begin{subfigure}{.5\textwidth}
				\centering
				\includegraphics[width=0.5\linewidth]{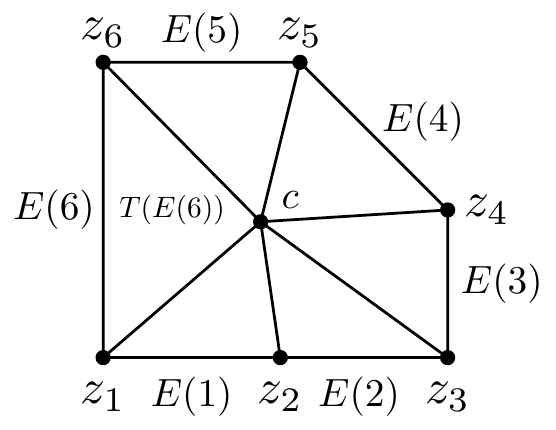}
				\caption{}
				\vspace{1cm}
				\label{d2}
			\end{subfigure}
			\caption{(a) Polygon $P$ and (b) its sub-triangulation $\tT(P)$.}
			\label{tz}
		\end{figure}
%\end{enumerate}
\end{rem}
\begin{lemma}[Poincar\'e-Friedrichs inequality]\label{2.4b} %\begin{enumerate}[label=$\left(\alph*\right)$]
	  There exists a positive constant $C_\PF$, that  depends solely on $\rho$,  such that
	\begin{align}
	\|f\|_{L^2(P)}\leq C_\PF h_P|f|_{1,P} \label{eq}
	\end{align}
	 holds for any $f\in H^1(P)$ with 
	 $ \sum_{j\in J}\int_{E(j)} f\,ds=0$ for a nonempty subset $J\subseteq \{1,\dots,m\}$ of indices in the notation $\partial P=E(1)\cup \dots\cup E(m)$ of Figure~\ref{tz}. The constant $C_\PF$ depends exclusively on the number $m:=|\e(P)|$ of the edges in the polygonal domain $P$ and the quotient of the maximal area divided by the minimal area of a triangle in the triangulation $\tT(P)$.
\end{lemma}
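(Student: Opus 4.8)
The plan is to establish the Poincaré–Friedrichs inequality on the polygonal domain $P$ by transferring the standard triangle-wise Poincaré inequality through the shape-regular sub-triangulation $\tT(P)$ from Remark~\ref{2.4c}. First I would reduce to a compactness/contradiction argument on a fixed reference configuration, or alternatively build the estimate up triangle-by-triangle; I will sketch the constructive route since it makes the dependence on $m$ and the area quotient transparent. Write $\partial P=E(1)\cup\dots\cup E(m)$ and let $T(E(j))=\mathrm{conv}(c,E(j))$ be the triangles of $\tT(P)$ sharing the interior node $c$. On each triangle $T(E(j))$ the trace $\int_{E(j)}f\,ds$ controls $\|f\|_{L^2(T(E(j)))}$ up to $h_P|f|_{1,T(E(j))}$ by a scaled Poincaré–trace inequality on the shape-regular triangle $T(E(j))$ (here one uses $h_{T(E)}\le h_P\le C_{\mathrm{sr}}h_{T(E)}$ and the minimum-angle bound depending only on $\rho$).

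The main obstacle is that the hypothesis only gives $\sum_{j\in J}\int_{E(j)}f\,ds=0$ for \emph{one} nonempty index subset $J$, not that each individual edge integral vanishes, so the per-triangle estimates cannot be applied directly. To bridge this, I would first control the oscillation of $f$ between neighbouring triangles: for any two edges $E(j), E(k)$ I can compare $\fint_{E(j)}f\,ds$ and $\fint_{E(k)}f\,ds$ by chaining through the shared vertex $c$ (or through adjacent triangles), each step costing $h_P|f|_{1,T}$ with a constant depending on the minimum angle. Summing at most $m$ such steps shows $|\fint_{E(j)}f\,ds - \fint_{E(k)}f\,ds|\lesssim m\, h_P^{1/2}|f|_{1,P}$ (with the right power of $|E|\ge\rho h_P$ absorbed). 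Since $\sum_{j\in J}\int_{E(j)}f\,ds=0$, at least one edge mean $\fint_{E(j_0)}f\,ds$ is bounded in modulus by the average spread, hence $|\fint_{E(j_0)}f\,ds|\lesssim m\,h_P^{1/2}|f|_{1,P}$; then every edge mean, and in particular every quantity $\int_{E(j)}f\,ds$, is likewise controlled.

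Finally I would assemble the global estimate: on each triangle $T(E(j))$,
\[
\|f\|_{L^2(T(E(j)))}\lesssim h_P|f|_{1,T(E(j))}+|E(j)|^{-1/2}\Bigl|\int_{E(j)}f\,ds\Bigr|,
\]
and summing the squares over $j=1,\dots,m$, using $|E(j)|\ge\rho h_P$ and the bound on the edge integrals from the previous step, yields $\|f\|_{L^2(P)}^2\lesssim h_P^2|f|_{1,P}^2$ with a constant $C_\PF$ that depends only on $m$, on the area quotient $\max_j|T(E(j))|/\min_j|T(E(j))|$ (which enters the scaling of the trace inequalities), and on $\rho$ through the minimum angle. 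Taking the square root gives \eqref{eq}. The one point requiring care is keeping the constant independent of $h_P$ throughout, which follows because every auxiliary inequality is a scaled version of a fixed inequality on a reference triangle and the number of chaining steps is at most $m$; I expect the bookkeeping of these scalings, rather than any single inequality, to be the most delicate part.
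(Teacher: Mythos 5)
Your plan follows essentially the same route as the paper's proof: split $P$ into the sub-triangulation $\tT(P)$, reduce the $L^2$ bound on each triangle $T(j)$ to the edge mean $x_j:=\dashint_{E(j)}f\,ds$ via a trace identity together with the scaled Poincar\'e inequality, estimate the differences $x_j-x_{j-1}$ of neighbouring edge means via a trace identity across the interior faces $\mathrm{conv}\{c,z_j\}$, and exploit $\sum_{j\in J}\int_{E(j)}f\,ds=0$ through the observation that it forces $0\in\mathrm{conv}\{x_1,\dots,x_m\}$. The only genuine deviation is the last aggregation step: the paper invokes the discrete inequality $\sum_k x_k^2\leq\mathcal{M}\sum_k(x_k-x_{k-1})^2$ with $\mathcal{M}=1/(2(1-\cos(\pi/m)))$, valid when $0\in\mathrm{conv}\{x_k\}$, whereas you use the cruder bound $\max_k|x_k|\le\mathrm{diam}\{x_k\}\le\sum_k|x_k-x_{k-1}|$ followed by Cauchy--Schwarz. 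Both close the argument with a constant that depends only on $m$ and $t_{\mathrm{max}}/t_{\mathrm{min}}$, yours being larger by a harmless factor of order $\pi$. When you make this precise, repair two scaling slips: the claimed $|x_j-x_k|\lesssim m\,h_P^{1/2}|f|_{1,P}$ carries a spurious $h_P^{1/2}$ (in 2D the edge means and $|f|_{1,P}$ share the same units, and each chaining step costs $h_P t_{\mathrm{min}}^{-1/2}|f|_{1,T(j-1)\cup T(j)}$, which is $O(1)\cdot|f|_{1,T(j-1)\cup T(j)}$), and the per-triangle inequality should read $\|f\|_{L^2(T(j))}\lesssim h_{T(j)}|f|_{1,T(j)}+|T(j)|^{1/2}|x_j|$ rather than $h_P|f|_{1,T}+|E(j)|^{-1/2}\bigl|\int_{E(j)}f\,ds\bigr|$, which is short by $h_P^{1/2}$. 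These are bookkeeping fixes, which you anticipate yourself, and they do not affect the soundness of the plan.
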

Some comments on $C_\PF$ for anisotropic meshes are in order before the proof gives an explicit expression for $C_\PF$.
\begin{example}
	Consider a rectangle $P$ with a large aspect ratio divided into four congruent sub-triangles all with vertex $c=\text{mid}(P)$. Then, $m=4$ and the quotient of the maximal area divided by the minimal area of a triangle in the criss-cross triangulation $\tT(P)$ is one. Hence $C_\PF\leq 1.4231$ (from the proof below) is independent of the aspect ratio of $P$.
\end{example}
\begin{proof}[Proof of Lemma~\ref{2.4b}]
	The case $J=\{1,\dots,m\}$ with $f\in H^1(P)$ and $\int_{\partial P}f\,ds=0$ is well-known cf. e.g. \cite[Sec.~2.1.5]{14}, and follows from the Bramble-Hilbert lemma \cite[Lemma~4.3.8]{7} and the trace inequality \cite[Sec.~2.1.1]{14}. The remaining part of the proof shows the inequality \eqref{eq}  for the  case $J\subseteq\{1,\dots,m\}$. The polygonal domain $P$ and its triangulation $\tT(P)$ from Figure~\ref{tz}	has the center $c$ and the nodes $z_1,\dots,z_m$ for the $m:=|\e(P)|=|\tT(P)|$ edges $E(1),\dots,E(m)$ and the triangles $T(1),\dots,T(m)$ with $T(j)=T(E(j))=\text{conv}\{c,E(j)\}=\text{conv}\{c,z_j,z_{j+1}\}$ for $j=1,\dots,m$. Here and throughout this proof, all indices are understood modulo $m$,  e.g., $z_{0}=z_m$. The proof uses the trace identity
\begin{align}
\dashint_{E(j)} f\,ds = \dashint_{T(j)}f\,dx+\frac{1}{2}\dashint_{T(j)}(x-c)\cdot\nabla f(x)\,dx\label{e1}
\end{align}
for $f\in H^1(P)$ as in the lemma. This follows from an integration by parts and the observation that $(x-c)\cdot\bn_F = 0$ on $F\in\e(T(j))\backslash E(j)$ and the height $(x-c)\cdot\bn_{E(j)}= \frac{2|T(j)|}{|E(j)}$  of the edge $E(j)$   in the triangle $T(j)$, for $x\in E(j)$; cf. \cite[Lemma~2.1]{carstensen2012explicit} or \cite[Lemma~2.6]{8} for the remaining details. Another version of the trace identity \eqref{e1} concerns $\text{conv}\{z_j,c\}=: F(j)=\partial T(j-1)\cap\partial T(j)$ and reads
\begin{align}
\dashint_{F(j)} f\,ds& = \dashint_{T(j-1)}f\,dx+\frac{1}{2}\dashint_{T(j-1)}(x-z_{j-1})\cdot\nabla f(x)\,dx\nonumber\\&=\dashint_{T(j)}f\,dx+\frac{1}{2}\dashint_{T(j)}(x-z_{j+1})\cdot\nabla f(x)\,dx\label{e2}
\end{align}
 in $T(j-1)$ and $T(j)$. The three trace identities in \eqref{e1}-\eqref{e2} are rewritten with the following abbreviations, for $j=1,\dots m$,
\begin{align*}
&x_j:=\dashint_{E(j)}f\,ds,\quad f_j:=\dashint_{T(j)}f\,dx,\quad a_j:=\frac{1}{2}\dashint_{T(j)}(x-c)\cdot\nabla f(x)\,dx,\\ &b_j:=\frac{1}{2}\dashint_{T(j)}(x-z_{j})\cdot\nabla f(x)\,dx,\quad c_j:=\frac{1}{2}\dashint_{T(j)}(x-z_{j+1})\cdot\nabla f(x)\,dx.
\end{align*}
Let $ t_{\text{min}}=\min_{T\in\tT(P)}|T|$ and $ t_{\text{max}}=\max_{T\in\tT(P)}|T|$  abbreviate the minimal and maximal area of a triangle in $\tT(P)$ and let $\widehat{\Pi}_0f\in\p_0(\tT(P))$ denote the piecewise integral means of $f$ with respect to the triangulation $\tT(P)$. The Poincar\'e inequality in a triangle with the constant $C_\text{P}:=1/j_{1,1}$ and the first positive root $j_{1,1}  \approx 3.8317$ of the Bessel function $J_1$   from \cite[Thm.~2.1]{carstensen2012explicit} allows for
\begin{align*}
\|f-\widehat{\Pi}_0f\|_{L^2(T(j))}\leq C_\text{P}h_{T(j)} |f|_{1,T(j)}\quad\text{for}\;j=1,\dots,m.
\end{align*}
Hence $\|f-\widehat{\Pi}_0f\|_{L^2(P)}\leq C_\text{P}h_{P} |f|_{1,P}$. This and the Pythagoras theorem (with $f-\widehat{\Pi}_0f\perp\p_0(\tT(P))$ in $L^2(P)$) show 
\begin{align}
\|f\|^2_{L^2(P)}=\|\widehat{\Pi}_0f\|^2_{L^2(P))}+\|f-\widehat{\Pi}_0f\|^2_{L^2(P))}\leq \|\widehat{\Pi}_0f\|^2_{L^2(P))}+C_\text{P}^2h_P^2|f|^2_{1,P}.\label{2.4}
\end{align}
 It remains to bound the term $\|\widehat{\Pi}_0f\|^2_{L^2(P))}$. The assumption on $f$ reads $\sum_{j\in J}\int_{E(j)}f\,ds=\sum_{j\in J}|E(j)|x_j=0$
for a subset $J\subset \{1,\dots,m\}$ so that $0\in\text{conv}\{|E(1)|x_1,\dots,|E(m)|x_m\}$. It follows $0\in\text{conv}\{x_1,\dots,x_m\}$ and it is known that this implies 
\begin{align}
\sum_{k=1}^{m}x_k^2\leq {\cal{M}}\sum_{k=1}^{m}(x_{k}-x_{k-1})^2\label{e3}
\end{align}
for a constant ${\cal{M}} = \frac{1}{2(1-\cos(\pi/m))}$ that depends exclusively on $m$  \cite[Lemma~4.2]{8}. Recall \eqref{e1} in the form $x_j=f_j+a_j$ to deduce from a triangle inequality and \eqref{e3} that
\begin{align*}
\frac{1}{2}\sum_{j=1}^{m}f_j^2\leq \sum_{k=1}^m x_k^2+\sum_{\ell=1}^{m}a_\ell^2\leq {\cal{M}} \sum_{k=1}^{m}(x_{k}-x_{k-1})^2+\sum_{\ell=1}^{m}a_\ell^2.
\end{align*}
This shows that
\begin{align*}
t_{\text{max}}^{-1}\|\widehat{\Pi}_0f\|^2_{L^2(P)} =t_{\text{max}}^{-1}\sum_{j=1}^{m}|T(j)|f_j^2\leq \sum_{j=1}^{m}f_j^2\leq 2{\cal {M}}\sum_{k=1}^{m}(x_{k}-x_{k-1})^2+2\sum_{\ell =1}^{m}a_\ell^2.
\end{align*}
 Recall \eqref{e1}-\eqref{e2} in the form $f_{j}-f_{j-1}=b_{j-1}-c_j$ and $x_{j}-x_{j-1}=f_{j}-f_{j-1}+a_{j}-a_{j-1}=b_{j-1}-a_{j-1}+a_{j}-c_{j}$ for all $j=1,\dots,m$. This and the Cauchy-Schwarz inequality imply the first two estimates in
\begin{align*}
2|x_{j}-x_{j-1}|&=\bigg|\dashint_{T(j-1)}(c-z_{j-1})\cdot\nabla f(x)\,dx+\dashint_{T(j)}(z_{j+1}-c)\cdot\nabla f(x)\,dx\bigg|\\&\leq \max\{|c-z_{j-1}|,|c-z_{j+1}|\}\Big(|T(j-1)|^{-1/2}|f|_{1,T(j-1)}+|T(j)|^{-1/2}|f|_{1,T(j)}\Big)\\&\leq h_Pt_{\text{min}}^{-1/2}|f|_{1,T(j-1)\cup T(j)} 
\end{align*}
with the definition of  $h_P$ and $t_{\text{min}}$ in the end. The  inequality $\int_{T(j)}|x-c|^2\;dx \leq \frac{1}{2}h^2_{T(j)}|T(j)|$  \cite[Lemma~2.7]{8}  and the Cauchy-Schwarz inequality show, for $j=1,\dots,m$, that
\begin{align*}
|a_j|\leq 2^{-3/2}h_{T(j)}|T(j)|^{-1/2}|f|_{1,|T(j)|}\leq 2^{-3/2}h_Pt_{\text{min}}^{-1/2}|f|_{1,|T(j)|}.
\end{align*}
The combination of the previous three displayed estimates result in
\begin{align*}
4h_P^{-2}(t_{\text{min}}/t_{\text{max}}) \|\widehat{\Pi}_0f\|^2_{L^2(P)}\leq 2{\cal{M}}\sum_{k=1}^{m}|f|^2_{T(k-1)\cup T(k)}+\sum_{\ell =1}^{m}|f|^2_{1,T(\ell)}=(4{\cal{M}}+1)|f|^2_{1,P}.
\end{align*}
 This and \eqref{2.4} conclude the proof with the constant $C_\PF^2 = ({\cal{M}}+1/4)(t_{\text{max}}/t_{\text{min}})+C_\text{P}^{2}$.
\end{proof}
 In the nonconforming VEM,  the finite-dimensional space $V_h$  is a subset of the piecewise Sobolev space 
\[H^1(\T):=\{v \in L^2(\Omega): \forall P \in \T\quad v|_P \in H^1(P)\}\equiv\prod_{P\in\T}H^1(P).\]
 The piecewise $H^1$ seminorm (piecewise with respect to $\T$ hidden in the notation for brevity) reads
\[|v_h|_{1,\text{pw}}:=\bigg(\sum_{P\in\T}| v_h|_{1,P}^2\bigg)^{1/2}\quad\text{for any}\; v_h\in H^1(\T).\]

\subsection{Local virtual element space}
The first nonconforming virtual element space \cite{2} is %concerns 
a subspace of harmonic functions with edgewise constant Neumann boundary values on each polygon. 
The extended nonconforming virtual element space \cite{9,5} reads
\begin{align}
\widehat{V}_h(P):=\begin{rcases}\begin{dcases} v_h \in H^{1}(P):& \Delta v_h\in\p_1(P)\quad\text{and}\quad \forall E \in  \e(P)\quad {\frac{\partial v_h}{\partial\bn_P}}\Big|_{E} \in \p_0(E) \end{dcases}\end{rcases}.\label{2.1}
\end{align}
 %Note that $\p_1(P)\subset\widehat{V}_h(P)\subset H^{1+s}(P)\subset C(P)$ for $s>\frac{1}{2}$ that depends on the angles in $\partial P$.
 \begin{defn}[Ritz projection]\label{def1}
 	Let $\pid_1$  be the Ritz projection from $ H^1(P)$ onto the affine functions $\p_1(P)$ in the $H^1$ seminorm defined, for $v_h\in H^1(P)$, by
 %	\begin{subequations}
 		\begin{align}
 		(\nabla\pid_1 v_h -\nabla v_h, \nabla\chi)_{L^2(P)} = 0\quad \text{for all}\; \chi \in \p_1(P)\quad\text{and}\quad
 		\int_{\partial P} \pid_1 v_h \,ds= \int_{\partial P} v_h\,ds.\label{11.2}
 		\end{align}
 %	\end{subequations}
 \end{defn}
\begin{rem}[integral mean]\label{rem2}
	For $P\in\T$ and $f\in H^1(P)$, $\nabla\pid_1f=\Pi_0\nabla f$. (This follows from (\ref{11.2}.a) and the definition of the $L^2$ projection operator $\Pi_0$ (acting componentwise) onto the piecewise constants $\p_0(P;\mathbb{R}^2)$.)
\end{rem}

\begin{rem}[representation of $\pid_1$]\label{rem3}
	For $P\in\T$ and $f\in H^1(P)$,   the Ritz projection $\pid_1f$ reads
	%(\ref{11.1}) is $\nabla(\pid_1 f-f)\perp\p_0(P)$. Hence,  $\nabla \pid_1 f=\Pi_0\nabla f=\frac{1}{|P|}\int_{\partial P}f\bn_P\,ds$.
	\begin{align}
	(\pid_1 f)(x)=\frac{1}{|P|}\Big(\int_{\partial P}f\bn_P\,ds\Big)\cdot\Big(x-\text{mid}(\partial P)\Big)+\dashint_{\partial P}f\,ds \quad\text{for}\;x\in P.\label{pid}
	\end{align}
	(The proof of (\ref{pid}) consists in the verification of (\ref{11.2}): The equation (\ref{11.2}.a) follows from Remark~\ref{rem2} with an integration by parts. The equation (\ref{11.2}.b) follows from the definition of $\text{mid}(\partial P)$ as the barycenter of $\partial P$. \qedsymbol)
	\end{rem}

The enhanced virtual element spaces \cite{9,5} are designed with a computable $L^2$ projection $\Pi_1$ onto $\p_1(\T)$. 
The resulting local discrete space under consideration throughout this paper reads
\begin{align}
\ve :=\begin{rcases}\begin{dcases} v_h \in \widehat{V}_h(P):  v_h - \pid_1 v_h \perp \p_1(P) \quad \text{in}\; L^2(P)\end{dcases}\end{rcases}.\label{10}
\end{align}
The point in the selection of $V_h(P)$ is that the Ritz projection $\pid_1 v_h$ coincides with  the $L^2$ projection $\Pi_1 v_h$ for all $v_h\in V_h(P)$.
The degrees of freedom on  $P$ are given  by 
\begin{align}
\text{dof}_E(v)=\frac{1}{|E|}\int_E v\,ds \quad \textrm{for all}\; E\in\e(P)\;\text{and}\; v\in \ve.\label{10.2}
\end{align}

\begin{propn}\label{lem2.2}
%	\begin{enumerate}[label=$\left(\alph*\right)$]
	$(a)$ The vector space $\widehat{V}_h(P)$ from (\ref{2.1}) is of dimension $3+|\e( P)|$. $(b)$ $\ve$ from \eqref{10} is of dimension $|\e(P)|$ and  the triplet $(P,V_h(P),\text{dof}_E:E\in\e(P))$ is a finite element in the sense of Ciarlet \cite{ciarlet1978finite}.
%	\end{enumerate}
\end{propn}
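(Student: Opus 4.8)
The plan is to prove the two dimension counts and then verify the Ciarlet triple. For part (a), I would count the degrees of freedom defining $\widehat{V}_h(P)$ in the representation (\ref{2.1}): a function $v_h \in \widehat{V}_h(P)$ is harmonic-up-to-$\p_1(P)$, so $\Delta v_h$ ranges over $\p_1(P)$ which has dimension $3$, and the Neumann data $\partial v_h/\partial\bn_P|_E \in \p_0(E)$ contributes one scalar per edge, i.e.\ $|\e(P)|$ parameters. The honest way to make this rigorous is the standard PDE argument: for each prescribed $g \in \p_1(P)$ and each edgewise-constant Neumann datum $\phi \in \prod_{E}\p_0(E)$ satisfying the compatibility condition $\int_P g\,dx = \int_{\partial P}\phi\,ds$, there is a solution of the Neumann problem $\Delta v_h = g$, $\partial v_h/\partial\bn_P = \phi$, unique up to an additive constant. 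So the data $(g,\phi)$ live in a space of dimension $3 + |\e(P)|$, the compatibility condition removes one, and the additive constant puts one back, giving $\dim\widehat{V}_h(P) = 3 + |\e(P)|$. I would cite \cite{2,5} for this computation, or spell out the short Fredholm/Lax–Milgram argument on the quotient space.

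For part (b), the space $\ve$ is cut out of $\widehat{V}_h(P)$ by imposing $v_h - \pid_1 v_h \perp \p_1(P)$ in $L^2(P)$, which is three linear constraints since $\dim\p_1(P)=3$. I would argue these three constraints are linearly independent on $\widehat{V}_h(P)$: the map $v_h \mapsto \Pi_1(v_h - \pid_1 v_h)$ from $\widehat{V}_h(P)$ to $\p_1(P)$ is surjective. One clean way to see surjectivity is to note that the three ``bubble-type'' directions in $\widehat{V}_h(P)$ corresponding to nonzero $\Delta v_h \in \p_1(P)$ are mapped onto a full-dimensional subspace of $\p_1(P)$: if $\Delta v_h = q \in \p_1(P)$ with homogeneous Neumann data (possible when $\int_P q = 0$, i.e.\ a $2$-dimensional subspace) then $v_h$ is a genuine non-polynomial with $\Pi_1(v_h - \pid_1 v_h)$ detecting $q$, and the remaining constant direction handles the last dimension via the normalization in (\ref{11.2}.b). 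Hence the three constraints drop the dimension by exactly $3$, giving $\dim\ve = 3 + |\e(P)| - 3 = |\e(P)|$.

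It then remains to check that $\{\text{dof}_E : E \in \e(P)\}$ is a unisolvent set of functionals on $\ve$, i.e.\ the $|\e(P)| \times |\e(P)|$ system is invertible; equivalently, if $v_h \in \ve$ has $\int_E v_h\,ds = 0$ for every $E \in \e(P)$ then $v_h = 0$. Given such $v_h$, integration by parts on $P$ gives
\[
\|\nabla v_h\|_{L^2(P)}^2 = -\int_P v_h\,\Delta v_h\,dx + \int_{\partial P} v_h\,\frac{\partial v_h}{\partial \bn_P}\,ds.
\]
The boundary term vanishes because $\partial v_h/\partial\bn_P$ is edgewise constant and $\int_E v_h\,ds = 0$ on each edge. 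For the volume term, $\Delta v_h \in \p_1(P)$; writing $\Delta v_h = \pid_1(\Delta v_h) + (1-\pid_1)(\Delta v_h)$ and using the enhancement property $v_h - \pid_1 v_h \perp \p_1(P)$ together with Remark~\ref{rem2} ($\nabla \pid_1$ is the $L^2$-projection of the gradient), one reduces $\int_P v_h\,\Delta v_h\,dx$ to $\int_P \pid_1 v_h\,\Delta v_h\,dx = \int_{\partial P}(\pid_1 v_h)\,\phi\,ds$ where $\phi = \partial v_h/\partial\bn_P$ is edgewise constant; since $\pid_1 v_h$ has, on each edge, the same mean as $v_h$ (this uses (\ref{11.2}.b) and the structure of $\pid_1$ in (\ref{pid})), and $\int_E v_h = 0$, this also vanishes. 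Hence $\nabla v_h = 0$, so $v_h$ is constant, and $\int_E v_h = 0$ forces $v_h \equiv 0$. I expect the volume-term bookkeeping — carefully exploiting the enhancement constraint and the edgewise mean property of $\pid_1$ to kill $\int_P v_h \Delta v_h$ — to be the only delicate point; everything else is dimension counting and a one-line integration by parts.
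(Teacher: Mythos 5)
Part (a) is fine and matches the paper's Neumann--problem counting argument. The issues are in part (b).

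First, the surjectivity argument for the map $T:\widehat{V}_h(P)\to\p_1(P)$, $v_h\mapsto\Pi_1(v_h-\pid_1 v_h)$, does not work as you state it. The constant functions are in the \emph{kernel} of $T$, not a useful direction for surjectivity: if $c$ is a constant then $\pid_1(v_h+c)=\pid_1 v_h+c$, so $(v_h+c)-\pid_1(v_h+c)=v_h-\pid_1 v_h$ and $T(v_h+c)=T(v_h)$. Thus the normalization $(\ref{11.2}.b)$ cannot ``handle the last dimension''. Fortunately, the surjectivity of $T$ is not needed at all: the linear conditions $v_h-\pid_1 v_h\perp\p_1(P)$ can only reduce the dimension by at most $3$, giving $\dim V_h(P)\geq m$, while your unisolvency argument (once repaired) shows the $m$ edge functionals are injective on $V_h(P)$, giving $\dim V_h(P)\leq m$. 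This is precisely the paper's route, except that the paper establishes the lower bound by exhibiting $m$ linearly independent nodal basis functions of a Ciarlet element on $\widehat{V}_h(P)$ that land inside $V_h(P)$.

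Second, the bookkeeping in your unisolvency proof contains two incorrect claims. The identity $\int_P\pid_1 v_h\,\Delta v_h\,dx=\int_{\partial P}\pid_1 v_h\,\phi\,ds$ is not a one-step integration by parts: Green's first identity leaves a term $\int_P\nabla(\pid_1 v_h)\cdot\nabla v_h\,dx$, and Green's second identity leaves $\int_{\partial P}v_h\,\partial_{\bn}(\pid_1 v_h)\,ds$, so some further argument is needed. More seriously, the statement ``$\pid_1 v_h$ has, on each edge, the same mean as $v_h$'' is false in general; equation $(\ref{11.2}.b)$ equalizes only the mean over the entire boundary $\partial P$, and the explicit formula $(\ref{pid})$ shows the edge means of $\pid_1 v_h$ generally differ from those of $v_h$. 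The clean way to run the argument is the one the paper uses: under the hypothesis $\int_E v_h\,ds=0$ for every $E\in\e(P)$, formula $(\ref{pid})$ immediately gives $\pid_1 v_h\equiv 0$ (both $\int_{\partial P}v_h\bn_P\,ds$ and $\dashint_{\partial P}v_h\,ds$ vanish). The enhancement constraint then forces $\Pi_1 v_h=\pid_1 v_h=0$, so the volume term $-(\Delta v_h,v_h)_{L^2(P)}=-(\Delta v_h,\Pi_1 v_h)_{L^2(P)}=0$ since $\Delta v_h\in\p_1(P)$, and the boundary term vanishes as you correctly argued. This yields $\nabla v_h=0$ and then $v_h\equiv 0$ by $\dashint_{\partial P}v_h\,ds=0$. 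Your outline has the right structure, but the intermediate justifications must be replaced by this direct observation that $\pid_1 v_h=0$.
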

\begin{proof}
	Let $E(1),\dots,E(m)$ be an enumeration of the edges $\e(P)$ of the polygonal domain $P$ in a consecutive way as depicted in Figure \ref{tz}.a and define $W(P):=\p_1(P)\times\p_0(E{(1)})\times\dots\times\p_0(E{(m)})$. Recall $\widehat{V}_h(P)$ from (\ref{2.1}) and identify the quotient space $ \widehat{V}_h(P)/\mathbb{R}\equiv\left\{f\in \widehat{V}_h(P):\right.\\\left. \int_{\partial P}f\,ds=0\right\}$ with all functions in $\widehat{V}_h(P)$ having zero integral over the boundary $\partial P$ of  $P$. Since the space $\widehat{V}_h(P)$ consists of functions with an affine Laplacian and edgewise constant Neumann data, the map
\begin{align*}
S:\widehat{V}_h(P)/\mathbb{R}\to W(P),\quad\quad f\mapsto\left(-\Delta f,\frac{\partial f}{\partial \bn_P}\Big|_{E{(1)}},\dots,\frac{\partial f}{\partial \bn_P}\Big|_{E{(m)}}\right)
\end{align*}	
is well-defined and linear. The compatibility conditions for the existence of a solution of a Laplacian problem with Neumann data show that the image of $S$ is equal to
\begin{align*}	
\mathcal{R}(S)=\left\{(f_1,g_1,\dots,g_m)\in W(P):\int_P f_1dx+\sum_{j=1}^m g_j|E(j)|=0\right\}.
\end{align*}
(The proof of this identity assumes the compatible data $(f_1,g_1,\dots,g_m)$ from the set on the right-hand side and solves the Neumann problem with a unique solution $\widehat{u}$ in $\widehat{V}_h(P)/\mathbb{R} $ and $S\widehat{u}=(f_1,g_1,\dots,g_m)$.) It is known that the Neumann problem has a unique solution up to an additive constant and so $S$ is a bijection and the dimension $m+2$ of $\widehat{V}_h(P)/\mathbb{R}$ is that of $\mathcal{R}(S)$. In particular, dimension of $\widehat{V}_h(P)$ is $m+3$. This proves $(a)$.\\
Let $\Lambda_0,\Lambda_1,\Lambda_2: H^1(P)\to\mathbb{R}$ be linear functionals
\begin{align*}
\Lambda_0f:=\Pi_0f,\quad \Lambda_jf:={\cal M}_j((\pid_1-\Pi_1)f)
\end{align*}
 with ${\cal M}_jf:=\Pi_0((x_j-c_j)f)$ for $j=1,2$ and $f\in H^1(P)$ that determines an affine function $p_1\in\p_1(P)$ such that $(P,\p_1(P),(\Lambda_0,\Lambda_1,\Lambda_2))$ is a finite element in the sense of Ciarlet. For any edge $E(j)\in\e(P)$, define $\Lambda_{j+2}f=\dashint_{E(j)}f\,ds$ as integral mean of the traces of $f$ in $H^1(P)$ on $E(j)$. It is elementary to see that $\Lambda_0,\dots,\Lambda_{m+2}$ are linearly independent: If $f$ in $\widehat{V}_h(P)$ belongs to the kernel of all the linear functionals, then $\pid_1f=0$ from (\ref{pid}) with $\Lambda_jf=0$ for each $j=3,\dots,2+m$. Since the functionals $\Lambda_jf=0$ for $j=1,2$,    $(x_j-c_j)(\pid_1-\Pi_1)f=0$ and $\pid_1f=0$ imply $\Pi_1f=0$. An integration by parts leads to
\begin{align*}
\|\nabla f\|_{L^2(P)}^2=(-\Delta f,f)_{L^2(P)}+\Big(f,\frac{\partial f}{\partial\bn_P}\Big)_{L^2(\partial P)}=0.
\end{align*}
This and $\dashint_{\partial P} f\,ds=0$ show $f\equiv0$. Consequently, the intersection $\cap_{j=0}^{m+2}\text{Ker}(\Lambda_j)$ of all kernels Ker$(\Lambda_0),\dots,\text{Ker}(\Lambda)_{m+2}$ is trivial and so that the functionals $\Lambda_0,\dots,\Lambda_{m+2}$ are linearly independent. Since the number of the linear functionals is equal to the dimension of $\widehat{V}_h(P)$,  $(P,\widehat{V}_h(P),\{\Lambda_0,\dots,\Lambda_{m+2}\})$ is a finite element in the sense of Ciarlet and there exists a nodal basis $\psi_0,\dots,\psi_{m+2}$ of $\widehat{V}_h(P)$  with
\begin{align*}
\Lambda_j(\psi_k)=\delta_{jk}\quad\text{for all}\;j,k =0,\dots,m+2.
\end{align*}
The linearly independent functions $\psi_3,\dots,\psi_{m+2}$ belong to $V_h(P)$ and so dim$(V_h(P))\geq m$. Since $V_h(P)\subset \widehat{V}_h(P)$ and three linearly independent conditions $(1 -\pid_1) v_h \perp \p_1(P)$ in $L^2(P)$  are imposed on $\widehat{V}_h(P)$ to define $V_h(P)$, dim$(V_h(P)) \leq m$. This shows that dim$(V_h(P)) = m$ and hence,  the linear functionals $\text{dof}_E=\dashint_E\bullet\,ds$ for $E\in\e(P)$ form a  dual basis of  $V_h(P)$. This concludes the proof of $(b)$.
\end{proof}
\begin{rem}[stability of $L^2$ projection]\label{rem5}
	The  $L^2$ projection $\Pi_k$ for $k=0 ,1$ is $H^1$ and $L^2$ stable in $\ve$, in the sense that any $v_h$ in $V_h(P)$ satisfies
	\begin{align}
	\|\Pi_kv_h\|_{L^2(P)}\leq\|v_h\|_{L^2(P)}\; \text{and}\;\|\nabla(\Pi_kv_h)\|_{L^2(P)}\leq\|\nabla v_h\|_{L^2(P)}.\label{s1}
	\end{align}
	(The first inequality follows from the definition of $\Pi_k$. The orthogonality in \eqref{10} and the definition of $\Pi_1$  imply that the Ritz projection $\pid_1$ and the $L^2$ projection $\Pi_1$ coincide on the space $V_h(P)$ for $P\in\T$. This with the definition of the Ritz projection $\pid_1$ verifies the second inequality. \qedsymbol)
\end{rem}
\begin{defn}[Fractional order Sobolev space \cite{7}]
	Let $\alpha:=(\alpha_1,\alpha_2)$ denote a multi-index with $\alpha_j\in \mathbb{N}_0$ for $j=1,2$ and $|\alpha|:=\alpha_1+\alpha_2.$ For a real number $m$ with $0<m<1$, define
	\begin{align*}
	H^{1+m}(\omega):=\left\{v\in H^1(\omega):\frac{|v^{\alpha}(x)-v^{\alpha}(y)|}{|x-y|^{(1+m)}}\in L^2(\omega\times\omega)\quad\text{for all}\;|\alpha|=1\right\}
	\end{align*}
	with $v^\alpha$ as the partial derivative of $v$ of order $\alpha$. Define  the seminorm $|\cdot|_{1+m}$ and Sobolev-Slobodeckij norm $\|\cdot\|_{1+m}$  by
	\begin{align*}
	|v|_{1+m,\omega}^2=\sum_{|\alpha|=1}\int_{\omega}\int_{\omega}\frac{{|v^{\alpha}(x)-v^{\alpha}(y)|}^2}{|x-y|^{2(1+m)}}\,dx\,dy\quad\text{and}\quad
	\|v\|_{1+m,\omega}^2=\|v\|^2_{1,\omega}+|v|_{1+m,\omega}^2.
	\end{align*}
\end{defn}
\noindent
% Recall the regularity results from  (\ref{6}).
\begin{propn}[approximation by polynomials {\cite[Thm.~6.1]{10}}] \label{prop2.6}
	Under the assumption \ref{M2}, there exists a positive constant $C_{\mathrm{apx}}$ (depending on $\rho$ and on the polynomial degree $k$) such that, for every $v\in H^{m}(P)$, the $L^2$ projection $\Pi_k(P)$ on $\p_k$ for $k\in\mathbb{N}_0$ satisfies
	\begin{align}
	\|v-\Pi_kv\|_{L^2(P)}+h_P|v-\Pi_kv|_{1,P}\leq C_{\mathrm{apx}}h_P^{m}|v|_{m,P}\quad\text{for}\;1\leq m\leq k+1.\label{25.1}
	\end{align}
\end{propn}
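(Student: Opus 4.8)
The estimate \eqref{25.1} is the standard Bramble--Hilbert/Dupont--Scott polynomial approximation bound on a star-shaped domain, and the plan is to reduce it to two ingredients: the existence of \emph{some} polynomial approximation of the right quality, and the fact that $\Pi_k$ is the best $L^2(P)$ approximation, complemented by an inverse estimate to transfer optimality to the $H^1$ seminorm. Fix $P\in\T$ of diameter $h_P$. By \ref{M2}, $P$ is star-shaped with respect to a ball $B\subseteq P$ of radius $r\ge\rho h_P$, so the chunkiness parameter $h_P/r\le 1/\rho$ of $P$ is controlled by $\rho$ alone; the Bramble--Hilbert lemma on such a domain (in its fractional-order version for $m\notin\mathbb{N}$) provides, for each $1\le m\le k+1$, a polynomial $p\in\p_k(P)$ with
\begin{align*}
\|v-p\|_{L^2(P)}\le C_1\,h_P^{m}\,|v|_{m,P}\qquad\text{and}\qquad |v-p|_{1,P}\le C_1\,h_P^{m-1}\,|v|_{m,P},
\end{align*}
where $C_1$ depends only on $\rho$ and $k$. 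This is precisely the content of \cite[Thm.~6.1]{10}, so in the paper it may simply be quoted; the sketch below reconstructs how \eqref{25.1} follows from it.

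For the $L^2$ term of \eqref{25.1}, the variational characterisation $\|v-\Pi_k v\|_{L^2(P)}=\min_{q\in\p_k(P)}\|v-q\|_{L^2(P)}\le\|v-p\|_{L^2(P)}$ already gives the bound. For the seminorm term I would insert $p$ and use the triangle inequality
\begin{align*}
|v-\Pi_k v|_{1,P}\le |v-p|_{1,P}+|p-\Pi_k v|_{1,P},
\end{align*}
where the second summand involves only the polynomial $p-\Pi_k v\in\p_k(P)$. Bounding it by an inverse estimate $|q|_{1,P}\le C_{\mathrm{inv}}\,h_P^{-1}\|q\|_{L^2(P)}$ for $q\in\p_k(P)$ and using $\|p-\Pi_k v\|_{L^2(P)}\le\|p-v\|_{L^2(P)}+\|v-\Pi_k v\|_{L^2(P)}\le 2\|v-p\|_{L^2(P)}$ yields $|p-\Pi_k v|_{1,P}\le 2C_{\mathrm{inv}}C_1 h_P^{m-1}|v|_{m,P}$. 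Hence $h_P|v-\Pi_k v|_{1,P}\le (1+2C_{\mathrm{inv}})C_1 h_P^{m}|v|_{m,P}$, and adding this to the $L^2$ bound proves \eqref{25.1} with $C_{\mathrm{apx}}:=(2+2C_{\mathrm{inv}})C_1$.

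It remains to supply the inverse estimate on the polygon $P$ with a constant depending only on $\rho$ and $k$, which is the one place the geometry of $P$ rather than just the ball $B$ enters. I would use the shape-regular sub-triangulation $\tT(P)$ from Remark~\ref{2.4c}: restricting $q\in\p_k(P)$ to a triangle $T\in\tT(P)$, the elementary inverse inequality $|q|_{1,T}\le c\,h_T^{-1}\|q\|_{L^2(T)}$ holds with $c$ depending only on $k$ and the minimal angle of $T$ (hence only on $\rho$), and $h_P\le C_{\text{sr}}h_T$ from Remark~\ref{2.4c} turns it into $|q|_{1,T}\le cC_{\text{sr}}h_P^{-1}\|q\|_{L^2(T)}$; squaring and summing over $T\in\tT(P)$ gives $|q|_{1,P}\le cC_{\text{sr}}h_P^{-1}\|q\|_{L^2(P)}$, so one may take $C_{\mathrm{inv}}=cC_{\text{sr}}$.

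The genuinely nontrivial step is the fractional-order Bramble--Hilbert estimate on the star-shaped (not necessarily convex) domain $P$ with a constant depending only on the chunkiness parameter $1/\rho$ and on $k$: for integer $m$ this is classical Dupont--Scott, while for real $m$ it follows, e.g., by real interpolation between the integer cases or by a direct compactness argument, and it is exactly what \cite[Thm.~6.1]{10} delivers. Everything else---$L^2$-optimality of $\Pi_k$, the triangle inequality, and the inverse estimate through the shape-regular sub-triangulation---is routine, and the only bookkeeping needed is to verify that no constant secretly depends on $|\e(P)|$ or on $h_{\mathrm{max}}$, which holds because $\rho$ alone governs $\tT(P)$ and the chunkiness of $P$.
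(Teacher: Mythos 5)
The paper states Proposition~\ref{prop2.6} as a citation of \cite[Thm.~6.1]{10} and gives no proof of its own, so there is no in-paper argument to compare against. Your reconstruction is correct: combining the Dupont--Scott (averaged Taylor) polynomial approximation on a star-shaped domain, the $L^2$-best-approximation property of $\Pi_k$, the triangle inequality, and an inverse estimate for $\p_k(P)$ obtained by summing the elementwise inverse inequalities over the shape-regular sub-triangulation $\tT(P)$ from Remark~\ref{2.4c} yields exactly \eqref{25.1}, with all constants depending only on $\rho$ and $k$ as claimed. One small remark: depending on how \cite[Thm.~6.1]{10} is phrased, the estimate there may already be stated for the $L^2$ projection itself, in which case the detour via best approximation and an inverse estimate is unnecessary---but it is a valid reduction, and it is the standard way to pass from a Bramble--Hilbert existence statement to a bound for $\Pi_k$ in both the $L^2$ and $H^1$ seminorms.
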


\subsection{Global virtual element space}
  Define the global nonconforming virtual element space, for any $\T\in\mathbb{T}$, by
\begin{align}
V_h:=\left\{v_h \in H^1(\T): \forall P \in \T \quad v_h|_P \in \ve \quad\text{and}\quad \forall E\in \e\quad \int_{E}[v_h]_E\,ds=0\right\}.\label{12}
\end{align}
Let $[\cdot]_E$ denote the jump across an edge $E\in \e$:   For two neighboring polygonal domains $P^+$ and $P^-$ sharing a common edge $E\in\e(P^+)\cap\e(P^-)$, $[v_h]_E:=v_{h|P^{+}}-v_{h|P^{-}}$, where $P^+$ denote the adjoint polygonal domain with $\bn_{P^+|E}=\bn_E$ and $P^-$ denote the polygonal domain with $\bn_{P^-|E}=-\bn_E$. If $E\subset\partial\Omega$ is a boundary edge, then $[v_h]_E:=v_h|_E$.
\begin{example}\label{ex}
 If each polygonal domain $P$ is a triangle, then the finite-dimensional space $V_h$ coincides with CR-FEM space.   (Since the dimension of the vector space $V_h(P)$ is three and $\p_1(P)\subset V_h(P)$, $V_h(P)=\p_1(P)$ for  $P\in\T$.)
 \end{example}
 \begin{lemma}\label{lem2.5}
  There exists a universal constant  $ C_\F$ (that depends only on $\rho$ from \ref{M2}) such that, for all ${\cal T }\in\mathbb{T}$, any $ v_h\in V_h$ from (\ref{12}) satisfies
 \begin{align}
 \|v_h\|_{L^2(\Omega)}\leq C_{\mathrm{F}}|v_h|_{1,\pw}.\label{13.1a}
 \end{align} 
 \end{lemma}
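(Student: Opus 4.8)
The plan is to establish a discrete Poincaré–Friedrichs (Sobolev) inequality for the nonconforming space $V_h$ by localizing to each polygon, using the edge-integral continuity built into the definition \eqref{12} to transfer information across interelement boundaries, and then summing. The key mechanism is that although $v_h \in V_h$ is discontinuous across edges, the jump $[v_h]_E$ has vanishing integral mean on every interior edge $E$, and $v_h|_E$ has vanishing integral mean on every boundary edge $E \subset \partial\Omega$.

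First I would reduce the estimate to the piecewise integral means. On each polygon $P$, write $v_h = \widehat{\Pi}_0 v_h + (v_h - \widehat{\Pi}_0 v_h)$ with respect to the shape-regular sub-triangulation $\widehat{\mathcal T}(P)$ from Remark~\ref{2.4c}; by Proposition~\ref{prop2.6} (or the triangle Poincaré inequality used in the proof of Lemma~\ref{2.4b}) the oscillation term $\|v_h - \widehat{\Pi}_0 v_h\|_{L^2(P)}$ is controlled by $h_P |v_h|_{1,P}$. So it suffices to bound $\sum_P \|\widehat{\Pi}_0 v_h\|_{L^2(P)}^2$, equivalently the vector of edge-averages $\mathrm{dof}_E(v_h) = \dashint_E v_h\,ds$ over all $E \in \mathcal E$, which (via the trace identity \eqref{e1}) are themselves close to the triangle averages $f_j$. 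The natural route is then to view the collection $(\mathrm{dof}_E(v_h))_{E \in \mathcal E}$ as a Crouzeix–Raviart-type object on the sub-triangulation $\widehat{\mathcal T} = \bigcup_P \widehat{\mathcal T}(P)$: the edge-average continuity condition $\int_E [v_h]_E\,ds = 0$ is exactly the defining constraint of the nonconforming $P_1$ space on the interior edges of $\widehat{\mathcal T}$ that come from $\mathcal E$, and on the auxiliary edges $F(j) = \mathrm{conv}\{z_j, c\}$ interior to each polygon the two-sided trace identity \eqref{e2} forces the averages $f_{j-1}, f_j$ of $v_h$ (on the two triangles meeting at $F(j)$) to differ only by a gradient term $b_{j-1} - c_j$ bounded by $h_P t_{\min}^{-1/2}|v_h|_{1,P}$.

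Hence I would build an auxiliary Crouzeix–Raviart function $w_h$ on $\widehat{\mathcal T}$ whose edge-averages match $\mathrm{dof}_E(v_h)$ on edges in $\mathcal E$ and are chosen (e.g. as the average of the two one-sided values, or propagated from one triangle) on the auxiliary edges, apply the classical discrete Friedrichs inequality for CR-FEM on the globally shape-regular mesh $\widehat{\mathcal T}$ — which holds with a constant depending only on $\rho$ — and then estimate $\|v_h - w_h\|_{L^2(\Omega)}$ and $|v_h - w_h|_{1,\pw}$ polygon-by-polygon by the gradient of $v_h$, using the trace identities \eqref{e1}–\eqref{e2} and the area ratios $t_{\max}/t_{\min}$ (bounded in terms of $\rho$ via Remark~\ref{2.4c}) much as in the proof of Lemma~\ref{2.4b}. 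Summing the local contributions and absorbing the oscillation term from the first step yields \eqref{13.1a} with $C_{\F}$ depending only on $\rho$.

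\textbf{The main obstacle} I anticipate is the bookkeeping at the auxiliary edges: one must ensure that assigning values on the interior edges $F(j)$ of each polygon does not destroy the global edge-average continuity on the shared edges in $\mathcal E$, and that the resulting perturbation $v_h - w_h$ is genuinely controlled by $|v_h|_{1,\pw}$ and not by $\|v_h\|_{L^2}$ (otherwise the argument is circular). This is handled precisely because \eqref{e1}–\eqref{e2} express all discrepancies between the various averages purely through $\nabla v_h$ with constants scaling like $h_P$ and $t_{\min}^{-1/2}$; the shape-regularity of $\widehat{\mathcal T}$ then keeps all these constants uniform. An alternative, avoiding the explicit CR construction, is a direct argument: extend $v_h$ by zero outside $\Omega$, use that its piecewise gradient plus the (bounded) sum of edge-jump contributions controls a broken Sobolev norm, and invoke a compactness/contradiction argument on the family $\mathbb T$ — but since $C_{\F}$ is claimed to depend only on $\rho$, the constructive CR-comparison route is the one I would pursue to keep the constant explicit.
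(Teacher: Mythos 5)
Your proposal is correct in substance but takes a more circuitous route than the paper's one-line argument. The paper observes that on the shape-regular sub-triangulation $\tT$ from Remark~\ref{2.4c}, any $v_h\in V_h$ is already a piecewise $H^1$ function whose edge-mean jumps vanish: across the auxiliary edges interior to a polygon $P$ this is automatic because $v_h|_P\in H^1(P)$, and across edges in $\e$ it is the defining constraint in \eqref{12}. Hence the broken Friedrichs inequality for $H^1(\tT)$ (Brenner--Scott, Thm.~10.6.16) applies to $v_h$ directly, with constant depending only on the shape regularity of $\tT$ (hence only on $\rho$), and the $\tT$-piecewise $H^1$ seminorm coincides with $|v_h|_{1,\pw}$ since $v_h$ has no distributional jump inside any $P$. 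You instead construct the Crouzeix--Raviart interpolant $w_h=I_{\text{CR}}v_h$ on $\tT$ and invoke the CR version of the discrete Friedrichs inequality; this works, and the operator you want is exactly $I_{\text{CR}}$ from \eqref{CR} with approximation property \eqref{im}, which the paper builds anyway in Subsection~3.4 for the companion operator. However, the ``main obstacle'' you flag does not arise: since $v_h\in H^1(P)$, the edge mean $\dashint_{F(j)}v_h\,ds$ on an auxiliary edge is single-valued, so $I_{\text{CR}}v_h$ is well-defined outright and no reconciliation of one-sided values is needed; the trace identities \eqref{e1}--\eqref{e2} control discrepancies between \emph{triangle} averages, not the edge averages you would assign. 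In short, both routes yield a constant depending only on $\rho$, but the CR detour and the associated bookkeeping are unnecessary once one recalls that the discrete Friedrichs inequality is available for the full piecewise $H^1$ space on a shape-regular mesh with vanishing edge-mean jumps, not merely for the piecewise-linear CR subspace.
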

 \begin{proof}%\phantom{\qedhere}
 	Recall from Remark~\ref{2.4c} that  $\tT$ is a shape regular sub-triangulation of $\T$ into triangles. Since $V_h\subset H^1(\tT)$ and the  Friedrichs' inequality  holds for all functions in $ H^1(\tT)$ \cite[Thm.~10.6.16]{7}, there exists a positive constant $C_{\text{F}}$ such that the (first) inequality holds in
 	\begin{align*}
 	\|v_h\|_{L^2(\Omega)}\leq C_{\text{F}}\left(\sum_{T\in\tT}\|\nabla v_h\|_{L^2(T)}^2\right)^{1/2}= C_{\text{F}}|v_h|_{1,\pw}.
 	\end{align*}
 	The (second) equality follows for  $v_h\in H^1(P)$ with $P\in\T$.
 	%and $C_{\text{F}}$ depends exclusively on the shape regularity of $\tT$, and hence on $\rho$ from Section $2.1$.
 \end{proof}
 Lemma~\ref{lem2.5} implies that the seminorm $|\cdot|_{1,\pw}$ is equivalent to the norm $\|\cdot\|_{1,\pw}:=\|\cdot\|^2_{L^2(\Omega)}+|\cdot|^2_{1,\pw}$ in $V_h$ with mesh-size independent equivalence constants.
\subsection{Interpolation}
\begin{defn}[interpolation operator]\label{2.3}
Let $(\psi_E : E\in\e)$ be the nodal basis  of $V_h$ defined by $\text{dof}_E(\psi_E)=1$ and $\text{dof}_{F}(\psi_E)=0$ for all other edges $F\in\e\setminus\{E\}$. The global interpolation operator $I_h:H^1_0(\Omega)\to V_h$  reads
\begin{align*}
I_hv:=\sum_{E\in\e}\Big(\dashint_E v\,ds\Big)\psi_E\quad\text{for}\;v\in V.
\end{align*}
\end{defn}\noindent
Since a Sobolev function $v\in V$ has  traces and  the jumps  $[v]_E$ vanish across any edge $E\in\e$, the interpolation operator $I_h$ is well-defined. Recall $\rho$ from \ref{M2}, $C_{\PF}$ from Lemma~\ref{2.4b}, and $C_{\text{apx}}$ from Proposition~\ref{prop2.6}.

\begin{thm}[interpolation error]\label{thm2.7}
	\begin{enumerate}[label=$\left(\alph*\right)$]
 \item There exists a positive constant $C_{\mathrm{Itn}}$ (depending  on $\rho$) such that 
 any $v\in H^1(P)$ and its interpolation $I_hv\in V_h(P)$  satisfy 
\begin{align*}
\|\nabla I_hv\|_{L^2(P)}\leq C_{\mathrm{Itn}}\|\nabla v\|_{L^2(P)}.
\end{align*}
\item  Any $P\in\T\in\mathbb{T}$ and $v\in H^1(P)$ satisfy 
%\begin{align*}
  $|v-I_hv|_{1,P}\leq (1+C_{\mathrm{Itn}})\|(1-\Pi_0)\nabla v\|_{L^2(P)}$
 % \end{align*}
  and \begin{align*}
  h_P^{-1} \|(1-\Pi_1I_h)v\|_{L^2(P)}+ |(1-\Pi_1I_h)v|_{1,P}\leq (1+C_\PF)\|(1-\Pi_0)\nabla v\|_{L^2(P)}.
  \end{align*}
  \item 	The positive constant $C_\mathrm{I}:=C_{\mathrm{apx}}(1+C_{\mathrm{Itn}})(1+C_{\PF})$, any $0<\sigma\leq 1$, and any $v\in H^{1+\sigma}(P)$ with the local interpolation  $I_hv|_P\in V_h(P)$ satisfy
  \begin{align}
  \|v-I_hv\|_{L^2(P)}+h_P|v-I_hv|_{1,P}\leq C_\mathrm{I}h^{1+\sigma}_P|v|_{1+\sigma,P}.\label{25.2}
  \end{align}
\end{enumerate}
\end{thm}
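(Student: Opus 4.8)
The plan is to prove the three parts in order, since each feeds into the next, and to reduce everything to the polynomial approximation estimate of Proposition~\ref{prop2.6} together with the Poincaré--Friedrichs inequality of Lemma~\ref{2.4b}. For part~(a), I would first observe that the interpolation $I_hv|_P$ depends on $v$ only through the edge averages $\dashint_E v\,ds$, and that adding a constant to $v$ does not change $\nabla I_hv$; hence I may replace $v$ by $v - \dashint_{\partial P}v\,ds$ and assume without loss of generality that $\dashint_{\partial P}v\,ds=0$. The key point is then to bound $\|\nabla I_hv\|_{L^2(P)}$ by $\|I_hv\|_{L^2(P)}$ (or by $|I_hv|_{1,P}$ itself) using an inverse-type estimate: $I_hv\in V_h(P)$ has affine Laplacian and edgewise constant Neumann data, and an integration by parts gives $\|\nabla I_hv\|^2_{L^2(P)} = -(\Delta I_hv, I_hv)_{L^2(P)} + (\partial_{\bn_P}I_hv, I_hv)_{L^2(\partial P)}$. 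The interior term is controlled via $\Pi_1$-stability (Remark~\ref{rem5}) and Proposition~\ref{prop2.6}, while the boundary term I would rewrite using that the Neumann data is edgewise constant so $(\partial_{\bn_P}I_hv, I_hv)_{L^2(\partial P)} = \sum_{E}(\partial_{\bn_P}I_hv)|_E \int_E I_hv\,ds = \sum_E (\partial_{\bn_P}I_hv)|_E \int_E v\,ds$, which brings $v$ back in; combined with a scaled trace inequality on $P$ and the Poincaré inequality (using $\dashint_{\partial P}v\,ds=0$) this yields the bound with $C_{\mathrm{Itn}}$ depending only on $\rho$.

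For part~(b), the first inequality is immediate from part~(a): since $\Pi_0\nabla v$ is the gradient of an affine function $q$, and $I_hq = q$ (because $\p_1(P)\subset V_h(P)$ and $I_h$ reproduces affine functions, as $\mathrm{dof}_E$ is exact on them), I write $v - I_hv = (v-q) - I_h(v-q)$ and estimate $|v-I_hv|_{1,P}\le |v-q|_{1,P} + |I_h(v-q)|_{1,P}\le (1+C_{\mathrm{Itn}})\|(1-\Pi_0)\nabla v\|_{L^2(P)}$. For the second inequality, I note that $\Pi_1 I_h$ reproduces affine functions too, so $(1-\Pi_1 I_h)v = (1-\Pi_1 I_h)(v-q)$ for any $q\in\p_1(P)$; choosing $q$ appropriately and using the $L^2$- and $H^1$-stability of $\Pi_1$ on $V_h(P)$ from Remark~\ref{rem5} plus the $H^1$-stability of $I_h$, the seminorm bound $|(1-\Pi_1 I_h)v|_{1,P}\lesssim \|(1-\Pi_0)\nabla v\|_{L^2(P)}$ follows. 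The $L^2$ part then needs a Poincaré--Friedrichs argument: since $v-I_hv$ has vanishing edge averages on every $E\in\e(P)$ (this is exactly the defining property of $I_hv$), I apply Lemma~\ref{2.4b} with $J=\{1,\dots,m\}$ to get $\|v-I_hv\|_{L^2(P)}\le C_{\PF}h_P|v-I_hv|_{1,P}$, and I likewise control $\|(1-\Pi_1)(v-I_hv)\|$; assembling these gives $h_P^{-1}\|(1-\Pi_1 I_h)v\|_{L^2(P)}\le (1+C_{\PF})\|(1-\Pi_0)\nabla v\|_{L^2(P)}$ after bookkeeping of the constants.

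Part~(c) is then a short combination: write $\|v-I_hv\|_{L^2(P)} + h_P|v-I_hv|_{1,P}$, bound $|v-I_hv|_{1,P}$ by part~(b) by $(1+C_{\mathrm{Itn}})\|(1-\Pi_0)\nabla v\|_{L^2(P)}$, bound $\|v-I_hv\|_{L^2(P)}$ using part~(b) via the $(1-\Pi_1 I_h)$ term plus $\|(1-\Pi_1)v\|_{L^2(P)}$ (or directly via the Poincaré--Friedrichs bound), and then invoke Proposition~\ref{prop2.6} with $k=0$ applied to $\nabla v\in H^\sigma(P)$: $\|(1-\Pi_0)\nabla v\|_{L^2(P)}\le C_{\mathrm{apx}}h_P^\sigma|\nabla v|_{\sigma,P} = C_{\mathrm{apx}}h_P^\sigma|v|_{1+\sigma,P}$. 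Collecting the constants gives exactly $C_{\mathrm I} = C_{\mathrm{apx}}(1+C_{\mathrm{Itn}})(1+C_{\PF})$. The main obstacle I anticipate is part~(a): making the inverse/trace estimate on the virtual function $I_hv$ genuinely $\rho$-explicit (and not merely $h_P$- and shape-dependent) requires care, because $V_h(P)$ contains non-polynomial functions, so one must work through the harmonic-plus-$\p_1$ structure and the edgewise-constant Neumann data rather than appealing to a finite-dimensional norm-equivalence with implicit constants; scaling to a reference configuration is not available for arbitrary polygons, so the argument must stay on $P$ and use Lemma~\ref{2.4b} and the trace inequality from \cite[Sec.~2.1.1]{14} directly.
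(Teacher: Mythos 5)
Your plans for parts (b) and (c) match the paper's route: for~(b) the paper also exploits $I_h|_{\p_1(P)}=\mathrm{id}$ (equivalently the identity $\Pi_0\nabla(1-I_h)v=0$ from \eqref{i1}) and reduces to $(1-\Pi_0)\nabla v$, then uses Lemma~\ref{2.4b} for the $L^2$ part, and (c) is the composition with Proposition~\ref{prop2.6}. But your plan for part~(a) has a genuine gap, and it is exactly the step you yourself flag as the ``main obstacle''. You integrate by parts in $\|\nabla I_hv\|^2_{L^2(P)}$ directly, keeping the boundary term $(\partial_{\bn_P}I_hv, I_hv)_{L^2(\partial P)}$; to absorb it you then need an inverse trace bound $\|\partial_{\bn_P}I_hv\|_{L^2(\partial P)}\lesssim h_P^{-1/2}\|\nabla I_hv\|_{L^2(P)}$ for the (non-polynomial, $\Delta$-in-$\p_1$) function $I_hv$, and that is not available with $\rho$-explicit constants at this stage --- any route through the nodal basis of $V_h(P)$ or through elliptic regularity of the local Neumann problem is circular or geometry-opaque. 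The paper avoids the boundary term altogether: it writes $\|\nabla I_hv\|^2=(\nabla I_hv,\nabla(I_hv-v))+(\nabla I_hv,\nabla v)$ and observes that in the first term, after integration by parts, the boundary contribution $\sum_E(\partial_{\bn_P}I_hv)|_E\int_E(I_hv-v)\,ds$ is exactly zero by the defining property $\int_E(I_hv-v)\,ds=0$. That leaves $(q_1,\pid_1I_hv-v)$ with $q_1:=-\Delta I_hv\in\p_1(P)$, where $\pid_1 I_hv-v$ has vanishing boundary integral and $\nabla(\pid_1I_hv-v)=(\Pi_0-1)\nabla v$, so Lemma~\ref{2.4b} gives $\|\pid_1I_hv-v\|_{L^2(P)}\le C_\PF h_P\|(1-\Pi_0)\nabla v\|_{L^2(P)}$.

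The second missing ingredient is the bound on $\|q_1\|_{L^2(P)}$: you say the interior term is ``controlled via $\Pi_1$-stability and Proposition~\ref{prop2.6}'', but $\Pi_1$-stability only yields $(q_1,I_hv)\le\|q_1\|_{L^2(P)}\|I_hv\|_{L^2(P)}$, and even granting the WLOG normalisation and Poincar\'e--Friedrichs you end up with $(q_1,I_hv)\lesssim\|q_1\|_{L^2(P)}h_P\|\nabla I_hv\|_{L^2(P)}$, which still requires $\|q_1\|_{L^2(P)}\lesssim h_P^{-1}\|\nabla I_hv\|_{L^2(P)}$ and even then absorbs into the wrong side unless the product of constants is below~$1$. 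The paper proves exactly this inverse estimate \eqref{inv} via a bubble-function argument: with $\phi_c\in S^1_0(\tT(P))$ the hat function at the interior node $c$, one has $C_1^{-1}\|q_1\|^2_{L^2(P)}\le(\phi_cq_1,q_1)_{L^2(P)}=(\nabla I_hv,\nabla(\phi_cq_1))_{L^2(P)}$ (integration by parts is exact because $\phi_cq_1\in H^1_0(P)$), and a standard $\p_2$ inverse estimate on the sub-triangles gives $\|q_1\|_{L^2(P)}\le C_1C_2C_{\mathrm{sr}}h_P^{-1}\|\nabla I_hv\|_{L^2(P)}$ with $\rho$-explicit constants. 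You do not mention this estimate, and without it the interior term cannot be closed. Finally, note that your intention to rely on ``the trace inequality from \cite[Sec.~2.1.1]{14} directly'' runs against the paper's stated design of circumventing trace inequalities; the bubble-function/integration-by-parts route is precisely what makes that possible and should replace the trace-inequality step in your write-up of~(a).
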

\begin{proof}[Proof of $(a)$]
	The boundedness of the interpolation operator in $V_h(P)$ is mentioned in \cite{5} with a soft proof in its appendix. The subsequent analysis aims at a clarification that $C_{\text{I}}$ depends exclusively on the parameter $\rho$ in \ref{M2}. The elementary arguments apply to more general situations in particular to 3D.
	Given $I_hv\in V_h(P)$, $q_1:=-\Delta I_hv\in\p_1(P)$ is affine and  $\int_E(v-I_hv)\,ds=0$. Since  $\frac{\partial I_hv}{\partial\bn_P}$ is edgewise constant, this shows $\int_E{\frac{\partial I_hv}{\partial\bn_P}}|_E(v-I_hv)\,ds=0$ for all $E\in\e(P)$ and so $\big\langle\frac{\partial I_hv}{\partial\bn_P},v-I_hv\big\rangle_{\partial P}=0$. An integration by parts leads to
	\begin{align*}
	(\nabla I_hv,\nabla(I_hv-v))_{L^2(P)}=(q_1,I_hv-v)_{L^2(P)}=(q_1,\pid_1I_hv-v)_{L^2(P)}
	\end{align*}
	with $q_1\in\p_1(P)$ and $\Pi_1v_h=\pid_1v_h$ for $v_h\in\ve$ in the last step. Consequently,
	\begin{align}
	\|\nabla I_hv\|_{L^2(P)}^2&=(\nabla I_hv,\nabla(I_hv-v))_{L^2(P)}+(\nabla I_hv,\nabla v)_{L^2(P)}\nonumber\\
	&=(q_1,\pid_1 I_hv-v)_{L^2(P)}+(\nabla I_hv,\nabla v)_{L^2(P)}\nonumber\\
	&\leq \|q_1\|_{L^2(P)}\|v-\pid_1I_hv\|_{L^2(P)}+\|\nabla I_hv\|_{L^2(P)}\|\nabla v\|_{L^2(P)}\label{1s}
	\end{align}
	with the Cauchy inequality in the last step. %An integration by parts with the definition of $I_h$ shows
	  Remark~\ref{rem2} and \ref{rem3} on the Ritz projection,  and the definition of $I_h$ show
	\begin{align}
\Pi_0\nabla v=	\nabla\pid_1 v=|P|^{-1}\int_{\partial P}v\,\bn_P\,ds=|P|^{-1}\int_{\partial P}I_hv \bn_P\,ds=\Pi_0 \nabla I_hv=\nabla\pid_1 I_hv.\label{i1}
	\end{align} 
	The function $f:=v-\pid_1I_hv\in H^1(P)$ satisfies $\int_{\partial P}f\,ds=\int_{\partial P}(v-I_hv)\,ds=0$ and the Poincar\'e-Friedrichs inequality from Lemma~\ref{2.4b}.a shows
	\begin{align}
	\|v-\pid_1I_hv\|_{L^2(P)}\leq C_{\text{PF}}h_P\|\nabla(v-\pid_1I_hv)\|_{L^2(P)}=C_{\text{PF}}h_P\|(1-\Pi_0)\nabla v\|_{L^2(P)}\label{2s}
	\end{align}
	with  (\ref{i1}) in the last step.	Let $\phi_c\in S^1_0(\tT(P)):=\{w\in C^0(P):w|_{T(E)}\in\p_1(T(E))\quad\text{for all}\;E\in\e(P)\}$ denote the piecewise linear nodal basis function of the interior node $c$ with respect to the triangulation $\tT(P)=\{T(E): E\in\e(P)\}$ (cf. Figure \ref{tz}.b for an illustration of $\tT(P)$). An inverse estimate 
	\begin{align*}
	\|f_1\|_{L^2(T(E))}\leq C_1\|\phi_c^{1/2}f_1\|_{L^2(T(E))}\quad\text{for all}\;f_1\in\p_1(\tT(P))
	\end{align*}
	on the triangle $T(E):=\text{conv}(E\cup\{c\})$ holds with the universal constant $C_1$. A constructive proof computes the mass matrices for $T$ with and without the weight $\phi_c$ to verify that the universal constant $C_1$  does not depend on the shape of the triangle $T(E)$. This implies 
	\begin{align}
	C_1^{-1}\|q_1\|_{L^2(P)}^2\leq (\phi_cq_1,q_1)_{L^2(P)}=(-\Delta I_hv,\phi_cq_1)=(\nabla I_hv,\nabla(\phi_cq_1))_{L^2(P)}\label{3s}
	\end{align}
	with an integration by parts for $\phi_c q_1\in H^1_0(P)$ and $I_hv$ in the last step. The mesh-size independent constant $C_2$ in the standard inverse estimate
\begin{align*}
h_{T(E)}\|\nabla q_2\|_{L^2(T(E))}\leq C_2\|q_2\|_{L^2(T(E))}\quad\text{for all}\;q_2\in\p_2(T(E))
\end{align*}
depends merely on the angles in the triangle $T(E), E\in \e(P),$ and so exclusively on $\rho$. With $C^{-1}_{\text{sr}}h_P\leq h_{T(E)}$ from Remark~\ref{2.4c}, this shows
\begin{align*}
C_2^{-1}C_{\text{sr}}^{-1}h_P\|\nabla\phi_cq_1\|_{L^2(P)}\leq\|\phi_cq_1\|_{L^2(P)}\leq\|q_1\|_{L^2(P)}.
\end{align*} 
This and (\ref{3s}) lead to
\begin{align}
\|q_1\|_{L^2(P)}\leq C_1C_2C_{\text{sr}}h_P^{-1}\|\nabla I_hv\|_{L^2(P)}.\label{inv}
\end{align}
The combination with (\ref{1s})-(\ref{2s}) proves
\begin{align}
\|\nabla I_hv\|_{L^2(P)}^2&\leq (C_1C_2C_{\text{sr}}C_{\text{PF}}\|(1-\Pi_0)\nabla v\|_{L^2(P)}+\|\nabla v\|_{L^2(P)})\|\nabla I_hv\|_{L^2(P)}\nonumber\\
&\leq(1+C_1C_2C_{\text{sr}}C_{\text{PF}})\|\nabla v\|_{L^2(P)}\|\nabla I_hv\|_{L^2(P)}.\nonumber
\qedhere
\end{align}
	\end{proof}
\begin{proof}[Proof of $(b)$]
	
	The identity (\ref{i1}) reads $\Pi_0\nabla(1-I_h)v=0$ and the triangle inequality results in
	\begin{align}
	|v-I_hv|_{1,P}&=\|(1-\Pi_0)\nabla(1-I_h)v\|_{L^2(p)}
	\nonumber\\&\leq \|(1-\Pi_0)\nabla v\|_{L^2(P)}+\|(1-\Pi_0)\nabla I_hv\|_{L^2(P)}.\label{c1}
	\end{align}
	Since $I_h$ is the identity in $\p_1(P)$, it follows $
	(1-\Pi_0)\nabla I_hv=(1-\Pi_0)\nabla I_h(v-\pid_1v).$
	  This and the boundedness of the interpolation operator $I_h$ lead to
	\begin{align}
	\|(1-\Pi_0)\nabla I_hv\|_{L^2(P)}&\leq\|\nabla I_h(1-\pid_1)v\|_{L^2(P)}\nonumber\\&\leq C_{\mathrm{Itn}}\|\nabla(1-\pid_1)v\|_{L^2(P)}=C_{\mathrm{Itn}}\|(1-\Pi_0)\nabla v\|_{L^2(P)}\label{c2}
	\end{align}  
	with Remark~\ref{rem2} in the last step.
	The combination of (\ref{c1}) and (\ref{c2}) proves the first part of $(b)$.\\
	%\phantom\qedhere	
	The identity	$|(1-\Pi_1I_h)v|_{1,P}=\|(1-\Pi_0)\nabla v\|_{L^2(P)}$ follows from  (\ref{i1}). Since $\Pi_1=\pid_1$ in $V_h$ and
	$\int_{\partial P}v\,ds=\int_{\partial P}I_hv\,ds=\int_{\partial P}\pid_1I_hv\,ds$,
	the Poincar\'e-Friedrichs inequality   \[\|(1-\Pi_1I_h)v\|_{L^2(P)}\leq C_{\text{PF}}h_P|(1-\Pi_1I_h)v|_{1,P}\] follows from Lemma~\ref{2.4b}.a. This concludes the proof of $(b)$.
	\end{proof}
\begin{proof}[Proof of $(c)$]
	This is an immediate consequence of the part $(b)$ with \eqref{25.1} and the Poincar\'e-Friedrichs inequality for $v-I_hv$ (from above) in Lemma~\ref{2.4b}.a.
\end{proof}

\section{Preliminary estimates}
This subsection formulates the discrete problem along with the properties of the discrete bilinear form such as boundedness and a G$\mathring{a}$rding-type inequality. 
\subsection{The discrete problem}
%\par 
Denote the restriction of the bilinear forms $a(\cdot,\cdot),\h b(\cdot,\cdot)$ and $c(\cdot,\cdot)$ on a polygonal domain $P\in\T$ by $a^P(\cdot,\cdot),\h b^P(\cdot,\cdot)$ and $c^P(\cdot,\cdot)$.  The corresponding local discrete bilinear forms are defined for $u_h, v_h\in V_h(P)$ by
\begin{align}
a_h^P(u_h,v_h)&:=  (\bk \nabla \Pi_1 u_h, \nabla \Pi_1 v_h)_{L^2(P)}+S^P((1 - \Pi_1)u_h,(1 - \Pi_1)v_h) ,\label{14}\\
b_h^P(u_h,v_h)&:=\ (\Pi_1 u_h,\bb \cdot \nabla \Pi_1v_h)_{L^2(P)},\label{15}\\
c_h^P(u_h,v_h)&:= (\gamma\Pi_1 u_h,\Pi_1 v_h)_{L^2(P)},\label{16}\\
B_h^P(u_h,v_h)&:=a_h^P(u_h,v_h)+b_h^P(u_h,v_h)+c_h^P(u_h,v_h).\label{17}
\end{align}
%Recall $\p_1(P)\subset V_h(P)$ such that the stabilizable term in (\ref{14}) is well defined.
 Choose the stability term $S^P(u_h,v_h)$ as a symmetric positive definite bilinear form on $\ve \times \ve$ for a positive constant $C_s$ independent of $P$ and $h_P$ satisfying
\begin{equation}
C_s^{-1}a^P(v_h,v_h) \leq S^P(v_h,v_h) \leq C_s a^P(v_h,v_h) \quad \text{for all}\h v_h\in V_h(P) \h \text{with}\h\Pi_1v_h=0.\label{13}
\end{equation}
For some positive constant  approximation $\overline{\bk}_P$ of $\bk$ over $P$ and the number $N_P:=|\e(P)|$ of the degrees of freedom (\ref{10.2}) of $V_h(P)$, a standard example of a stabilization term from \cite{1},\cite[Sec.~4.3]{sutton2017virtual} with a scaling coefficient $\overline{\bk}_P$  reads
\begin{align}
S^P(v_h,w_h):=\overline{\bk}_P\sum_{r=1}^{N_P} \text{dof}_r(v_h)\text{dof}_r(w_h) \quad\text{for all}\; v_h, w_h\in V_h.\label{5.1s}
\end{align}
Note that an approximation $\overline{\textbf{A}}_P$ is a positive real number (not a matrix) and  can be chosen as $\sqrt{a_0a_1}$ with the positive constants $a_0$ and $a_1$ from (\textbf{A2}).
 For $f\in L^2(\Omega)$ and $v_h\in V_h$, define the right-hand side functional $f_h$ on $V_h$ by
\begin{align}
(f_h,v_h)_{L^2(P)}&:=( f, \Pi_1v_h)_{L^2(P)}.\label{18}
\end{align}
The sum over all the polygonal domains  $P\in\T$ reads 
\begin{align*}
a_h(u_h,v_h)&:=\sum_{P\in\T} a_h^P(u_h,v_h),\hspace{0.5cm} b_h(u_h,v_h):=\sum_{P\in\T}b_h^P(u_h,v_h),\\
c_h(u_h,v_h)&:=\sum_{P\in\T} c_h^P(u_h,v_h),\hspace{0.5cm} s_h(u_h,v_h):=\sum_{P\in\T}S^P((1-\Pi_1)u_h,(1-\Pi_1)v_h),\\
B_h(u_h,v_h)&:=\sum_{P\in\T}B_h^P(u_h,v_h),\hspace{0.5cm} (f_h,v_h):=\sum_{P\in\T}(f_h,v_h)_P \quad\text{for all}\; u_h, v_h\in V_h.
\end{align*}
The discrete problem seeks $u_h\in V_h$ such that
\begin{align}
B_h(u_h,v_h)=(f_h,v_h)\quad\text{for all}\; v_h\in V_h.\label{19}
\end{align}
\begin{rem}[polygonal mesh with  small edges]
		The conditions \ref{M1}-\ref{M2} are well established and apply throughout the paper. The sub-triangulation $\tT$ may not be shape-regular without the edge condition $|E|\geq\rho h_P$ for an edge $E\in\T(P)$ and $P\in\T$, but satisfies the maximal angle condition and the arguments  employed in the proof of  \cite[Lemma~6.3]{beirao2017stability} can be applied to show \eqref{inv} in Theorem~\ref{thm2.7}.a. For more general
		star-shaped polygon domains with short edges, the recent anisotropic analysis  \cite{beirao2017stability, brenner2018virtual, cao2019anisotropic} indicates that the stabilization term has to be modified as well to avoid a logarithmic factor in the optimal error estimates.
\end{rem}
\subsection{Properties of the discrete bilinear form}
The following proposition provides two main properties of the discrete bilinear form $B_h$.
\begin{propn}\label{prop2.4}
	There exist  positive universal constants $M, \alpha$ and a universal nonnegative constant $\beta$ depending on the coefficients $\bk,\bb,\gamma$  such that
	\begin{enumerate}[label=$\left(\alph*\right)$]
	\item \label{a} Boundedness:  
	$|B_h(u_h,v_h)| \leq M|u_h|_{1,\pw}|v_h|_{1,\pw} \quad\text{for all}\h u_h,v_h \in V_h.$
	\item \label{b}G$\mathring{a}$rding-type inequality: 
	$\alpha|v_h|^2_{1,\pw}-\beta\|v_h\|^2_{L^2(\Omega)}\leq B_h(v_h,v_h) \quad\text{for all}\h v_h\in V_h.$
	\end{enumerate}
	\begin{proof}[Proof of \ref{a}]
		  The upper bound of the coefficients from the assumption \ref{A1}, the Cauchy-Schwarz inequality, the stability (\ref{s1}) of $\Pi_1$, and  the definition (\ref{13}) of the stabilization term imply the boundedness of $B_h$ with $M:=(1+C_s)\|\bk\|_{\infty}+C_\F\|\bb\|_{\infty}+C_\F^2\|\gamma\|_{\infty}$. The details of the proof follow as in \cite[Lemma~5.2]{4}
		   with the constant $C_\F$ from Lemma~\ref{lem2.5}. %\begin{align}
		  % &a_h(u_h,v_h)\leq \|\bk\|_\infty(1+C_s)|u_h|_{1,\pw}|v_h|_{1,\pw}\label{ah},\\b_h(u_h,v_h)\leq \|\bb\|_{\infty}C_F&|u_h|_{1,\pw}|v_h|_{1,\pw}\quad\text{and}\quad c_h(u_h,v_h)\leq \|\gamma\|_\infty C_F^2|u_h|_{1,\pw}|v_h|_{1,\pw}.\label{bh}
		  %\end{align}
		\end{proof}
	\begin{proof}[Proof of \ref{b}]
		  The first step shows that $a_h(\cdot,\cdot)$ is coercive. For $v_h \in \ve$,  $\Pi_1v_h=\pid_1v_h$ and $\nabla \Pi_1v_h\perp\nabla(v_h-\pid_1v_h)$ in $L^2(P;\mathbb{R}^2)$. This orthogonality, the assumption \ref{A2}, and the definition of the stability term (\ref{13}) with the constant $C_s^{-1}\leq 1$ imply for $\alpha_0=a_0C_s^{-1}$  that
		\begin{align}
		&\alpha_0|v_h|_{1,\pw}^2\leq  a_0\|\nabla_\pw\Pi_1 v_h\|^2_{L^2(\Omega)}+a_0C_s^{-1}\|\nabla_\pw (1-\Pi_1) v_h\|^2_{L^2(\Omega)}\nonumber\\&\quad\leq \left(\bk\nabla_\pw \Pi_1v_h,\nabla_\pw \Pi_1v_h)_{L^2(\Omega)}+C_s^{-1} (\bk\nabla_\pw(1-\Pi_1) v_h,\nabla_\pw(1-\Pi_1)v_h\right)_{L^2(\Omega)}\nonumber
		 \\&\quad\leq (\bk\nabla_\pw \Pi_1v_h,\nabla_\pw \Pi_1v_h)_{L^2(\Omega)} +s_h((1-\Pi_1)v_h,(1-\Pi_1)v_h)=a_h(v_h,v_h).\label{23}
		\end{align}
		The Cauchy-Schwarz inequality, (\ref{s1}), and the Young inequality lead to
		\begin{align}
		|b_h(v_h,v_h)+c_h(v_h,v_h)|
		&\leq \|\bb\|_\infty\|\Pi_1 v_h\|_{L^2(\Omega)}\|\nabla_\pw\Pi_1 v_h\|_{L^2(\Omega)}+\|\gamma\|_\infty\|\Pi_1 v_h\|_{L^2(\Omega)}^2\nonumber\\
		&\leq \|\bb\|_\infty\|v_h\|_{L^2(\Omega)}|v_h|_{1,\pw}+\|\gamma\|_\infty\|v_h\|_{L^2(\Omega)}^2\nonumber\\
		&\leq \frac{\|\bb\|^2_\infty}{2\alpha_0}\|v_h\|_{L^2(\Omega)}^2+\frac{\alpha_0}{2}|v_h|^2_{1,\pw}+\|\gamma\|_\infty\|v_h\|_{L^2(\Omega)}^2.\label{24}
		\end{align}
		The combination of (\ref{23})-(\ref{24}) proves
		\begin{align*}
		 \frac{\alpha_0}{2}|v_h|^2_{1,\pw}-\left(\frac{\|\bb\|^2_\infty}{2\alpha_0}+\|\gamma\|_\infty\right)\|v_h\|^2_{L^2(\Omega)}\leq B_h(v_h,v_h).
		\end{align*}
		This concludes the proof of \ref{b} with $\alpha=\frac{\alpha_0}{2}$ and  $\beta=\frac{\|\bb\|^2_\infty}{2\alpha_0}+\|\gamma\|_{\infty}$.
	\end{proof}
\end{propn}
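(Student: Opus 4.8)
The plan is to split $B_h=a_h+b_h+c_h$ into its three constituents and estimate each one locally on $P\in\T$ before summing. For the boundedness statement, on the diffusion part I would combine the coefficient bound from \ref{A1} with the Cauchy–Schwarz inequality applied to $(\bk\nabla\Pi_1 u_h,\nabla\Pi_1 v_h)_{L^2(P)}$ and then the $H^1$-stability of $\Pi_1$ on $\ve$ from \eqref{s1} to replace $\|\nabla\Pi_1 u_h\|_{L^2(P)}$ by $|u_h|_{1,P}$; for the stabilization contribution I would apply the Cauchy–Schwarz inequality for the SPD form $S^P$ followed by \eqref{13} to bound it by $C_s|u_h|_{1,P}|v_h|_{1,P}$. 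For $b_h^P$ and $c_h^P$ the $L^2$- and $H^1$-stability of $\Pi_1$ in \eqref{s1} yield the local bounds $\|\bb\|_\infty\|u_h\|_{L^2(P)}|v_h|_{1,P}$ and $\|\gamma\|_\infty\|u_h\|_{L^2(P)}\|v_h\|_{L^2(P)}$. Summing over $P$, a discrete Cauchy–Schwarz step in the finite sum, and the Friedrichs inequality of Lemma~\ref{lem2.5} (used once for $b_h$ and twice for $c_h$) to turn each $\|\cdot\|_{L^2(\Omega)}$ into $C_\F|\cdot|_{1,\pw}$ then give the bound with $M=(1+C_s)\|\bk\|_\infty+C_\F\|\bb\|_\infty+C_\F^2\|\gamma\|_\infty$.

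For the G{\aa}rding-type inequality the structural fact I would exploit is that on $\ve$ one has $\Pi_1 v_h=\pid_1 v_h$, so $\nabla\Pi_1 v_h$ and $\nabla(1-\Pi_1)v_h$ are $L^2(P)$-orthogonal and $|v_h|_{1,P}^2=\|\nabla\Pi_1 v_h\|_{L^2(P)}^2+\|\nabla(1-\Pi_1)v_h\|_{L^2(P)}^2$. Using the lower ellipticity bound $a_0$ from \ref{A2} on the first summand and the inequality $C_s^{-1}a^P\le S^P$ from \eqref{13} (with $C_s^{-1}\le 1$) on the stabilization term, this produces $a_h(v_h,v_h)\ge a_0 C_s^{-1}|v_h|_{1,\pw}^2=:\alpha_0|v_h|_{1,\pw}^2$. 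For the lower-order terms I would bound $|b_h(v_h,v_h)+c_h(v_h,v_h)|$ from above by $\|\bb\|_\infty\|v_h\|_{L^2(\Omega)}|v_h|_{1,\pw}+\|\gamma\|_\infty\|v_h\|_{L^2(\Omega)}^2$ via \eqref{s1}, and then split the first product by Young's inequality into $\tfrac{\|\bb\|_\infty^2}{2\alpha_0}\|v_h\|_{L^2(\Omega)}^2+\tfrac{\alpha_0}{2}|v_h|_{1,\pw}^2$. Absorbing the term $\tfrac{\alpha_0}{2}|v_h|_{1,\pw}^2$ into the coercivity of $a_h$ leaves $B_h(v_h,v_h)\ge\tfrac{\alpha_0}{2}|v_h|_{1,\pw}^2-\bigl(\tfrac{\|\bb\|_\infty^2}{2\alpha_0}+\|\gamma\|_\infty\bigr)\|v_h\|_{L^2(\Omega)}^2$, that is $\alpha=\alpha_0/2$ and $\beta=\tfrac{\|\bb\|_\infty^2}{2\alpha_0}+\|\gamma\|_\infty$.

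I do not expect a genuine obstacle; this is a routine adaptation of the continuity and coercivity estimates familiar from conforming VEM. The two places that need attention are: first, the use of \eqref{13} in both directions — the upper bound $S^P\le C_s a^P$ for boundedness and the lower bound $C_s^{-1}a^P\le S^P$ for the G{\aa}rding estimate — which is only meaningful because the enhancement in \eqref{10} forces $\Pi_1 v_h=\pid_1 v_h$ and hence the Pythagorean splitting of $|v_h|_{1,P}^2$ used above; and second, the global passage in the boundedness proof, where the $L^2(\Omega)$ norms generated by $b_h$ and $c_h$ must be absorbed into $|\cdot|_{1,\pw}$ by the discrete Friedrichs inequality of Lemma~\ref{lem2.5}, which is available precisely because the mean-jump conditions in \eqref{12} place $V_h$ inside $H^1(\tT)$.
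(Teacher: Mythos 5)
Your proposal is correct and follows essentially the same route as the paper: for (a) you fill in the standard continuity estimates that the paper delegates to \cite[Lemma~5.2]{4} (local Cauchy--Schwarz on each of $a_h^P, b_h^P, c_h^P$, stability of $\Pi_1$, the $S^P\le C_s a^P$ direction of \eqref{13}, then the discrete Friedrichs inequality of Lemma~\ref{lem2.5}), and for (b) you reproduce the paper's argument verbatim — the Pythagorean splitting from $\Pi_1=\pid_1$ on $V_h(P)$, the $C_s^{-1}a^P\le S^P$ direction of \eqref{13}, and Young's inequality to absorb the advection term — arriving at the same $M$, $\alpha$, and $\beta$. The only minor imprecision is that the stabilization contribution in (a) should carry the factor $\|\bk\|_{\infty}$ (i.e.\ be $C_s\|\bk\|_\infty|u_h|_{1,P}|v_h|_{1,P}$) since \eqref{13} compares $S^P$ with $a^P$ and not with the $H^1$-seminorm; your final $M$ is correct, so this is a slip in the sketch rather than a gap.
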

\begin{rem}[$\|\cdot\|_h\approx |\cdot|_{1,\pw}$]\label{rem6}
The discrete space $V_h$ of the nonconforming VEM is endowed with the natural norm $\|\cdot\|_h := a_h(\cdot,\cdot)^{1/2}$ induced by the scalar product $a_h$. The boundedness of $a_h$ is proven in $(a)$, while \eqref{23} shows the converse estimate in the equivalence $\|\cdot\|_h\approx |\cdot|_{1,\pw}$ in $V_h$, namely
\[\alpha_0|v_h|^2_{1,\pw}\leq a_h(v_h,v_h)\leq \|\bk\|_\infty(1+C_s)|v_h|_{1,\pw}^2\quad\text{for all}\; v_h\in V_h.\]
\end{rem}
%\section{\textit{A priori} error analysis}
%This section presents construction of the conforming companion operator, and its application in the proof of existence of a unique discrete solution.  A priori error estimates in $L^2$ and $H^1$-norm are established.
\subsection{Consistency error}
This subsection discusses the consistency error between the continuous bilinear form $B$ and the corresponding discrete bilinear form $B_h$. Recall the definition  $B^P(\cdot,\cdot)\equiv a^P(\cdot,\cdot)+b^P(\cdot,\cdot)+c^P(\cdot,\cdot)$
and 
 $B_h^P(\cdot,\cdot)\equiv a_h^P(\cdot,\cdot)+b_h^P(\cdot,\cdot)+c_h^P(\cdot,\cdot)$ for a polygonal domain $P\in\T$ from Subsection~2.1.
\begin{lemma}[consistency]\label{l1}
	%\begin{enumerate}[label=$\left(\alph*\right)$]
		$(a)$ There exists a positive constant $C_{\text{cst}}$  (depending only on $\rho$) such that any $v\in H^1(\Omega)$ and $w_h\in V_h$ satisfy
		\begin{align} 
		B^P(\Pi_1 v,w_h)-B_h^P(\Pi_1 v,w_h)\leq C_{\mathrm{cst}}\,h_P\|v\|_{1,P}|w_h|_{1,P}\quad\text{for all}\; P\in \T.\label{25}
		\end{align}
		$(b)$ Any $f\in L^2(\Omega)$ and $f_h:=\Pi_1f$ satisfy
		\begin{align}
		\|f-f_h\|_{V_h^*}:=\sup_{0\neq v_h\in V_h}\frac{(f-f_h,v_h)}{\|v_h\|_{1,\pw}}\leq C_\PF\, \mathrm{osc}_1(f,\T).\label{26}
		\end{align}	
%	\end{enumerate}
\end{lemma}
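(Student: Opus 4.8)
The plan is to estimate the two parts separately, exploiting that $\Pi_1 v$ is a polynomial of degree $\le 1$ on each $P$, so that the difference between $B^P$ and $B_h^P$ collapses on the leading term and what survives are only the low-order perturbations coming from the non-constant coefficients $\bk,\bb,\gamma$ plus the stabilization contribution. For part $(a)$, I would write $B^P(\Pi_1 v,w_h)-B_h^P(\Pi_1 v,w_h)$ as the sum of the three pieces $a^P-a_h^P$, $b^P-b_h^P$, $c^P-c_h^P$ evaluated at $(\Pi_1 v, w_h)$. In the $a$-term, since $\nabla\Pi_1 v$ is a constant vector $\Pi_0\nabla v$, the genuine (polynomial) parts of $a^P$ and $a_h^P$ would coincide if $\bk$ were constant; replacing $\bk$ by $\overline{\bk}_P:=\Pi_0\bk$ (or any fixed constant approximation) and using $|\bk-\overline{\bk}_P|\lesssim |\bk|_{1,\infty}h_P$ from \ref{A1} controls $(a^P-a_h^P)(\Pi_1v,w_h)$ by $h_P\|\nabla\Pi_1 v\|_{L^2(P)}(\|\nabla w_h\|_{L^2(P)}+\|\nabla\Pi_1 w_h\|_{L^2(P)})$; the stabilization term $S^P((1-\Pi_1)\Pi_1 v,(1-\Pi_1)w_h)=0$ vanishes identically because $(1-\Pi_1)\Pi_1 v=0$. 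The $b$- and $c$-terms are handled the same way: after inserting the constant coefficient approximations $\Pi_0\bb,\Pi_0\gamma$, the polynomial parts match (here $b_h^P$ uses $\Pi_1 w_h$ in place of $w_h$, so one also needs $(\bb-\Pi_0\bb)\cdot\nabla\Pi_1 v\in$ something orthogonal to, or small against, $w_h-\Pi_1 w_h$; since $\nabla\Pi_1 v$ is constant, $(\Pi_0\bb)\cdot\nabla\Pi_1 v$ is constant and $(\Pi_0\bb\cdot\nabla\Pi_1 v, w_h-\Pi_1 w_h)_{L^2(P)}=0$), and the remainder is again $O(h_P)$ times the relevant seminorms. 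Throughout I would use the stability bounds \eqref{s1} for $\Pi_1$ on $V_h(P)$ and the equivalence $\|w_h\|_{L^2(P)}\lesssim h_P|w_h|_{1,P}$ (Lemma~\ref{2.4b}, since one may subtract the edge mean) to convert all $L^2(P)$-norms of $w_h$ and $\Pi_1 v$ into the right-hand side $h_P\|v\|_{1,P}|w_h|_{1,P}$.

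For part $(b)$, the estimate is a direct consequence of the definition \eqref{18} of $f_h$: for any $v_h\in V_h$,
\begin{align*}
(f-f_h,v_h)=\se (f-\Pi_1 f,v_h)_{L^2(P)}=\se (f-\Pi_1 f,(1-\Pi_1)v_h)_{L^2(P)},
\end{align*}
using that $f-\Pi_1 f\perp\p_1(P)$ in $L^2(P)$. Applying Cauchy–Schwarz on each $P$ and then the Poincaré–Friedrichs inequality of Lemma~\ref{2.4b} to $(1-\Pi_1)v_h$ — which has vanishing integral mean on $\partial P$ up to the appropriate edge conditions, or more simply $\|(1-\Pi_1)v_h\|_{L^2(P)}\le\|(1-\Pi_0)v_h\|_{L^2(P)}\le C_\PF h_P|v_h|_{1,P}$ since $\Pi_1$ is the $L^2$-best approximation and $\Pi_0$ a competitor — gives $\se \|(1-\Pi_1)f\|_{L^2(P)}\,C_\PF h_P|v_h|_{1,P}$, and a final Cauchy–Schwarz over $P$ yields $C_\PF\,\mathrm{osc}_1(f,\T)\,|v_h|_{1,\pw}$. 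Dividing by $\|v_h\|_{1,\pw}\ge|v_h|_{1,\pw}$ completes $(b)$.

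The main obstacle is the bookkeeping in part $(a)$: one must be careful that in the $b_h^P$ and $c_h^P$ terms the discrete form tests against $\Pi_1 w_h$ rather than $w_h$, so the naive splitting $B^P-B_h^P$ does not immediately telescope — the trick is to add and subtract the continuous form evaluated with $\Pi_1 w_h$ (i.e. write $b^P(\Pi_1 v,w_h)-b^P(\Pi_1 v,\Pi_1 w_h)$ plus $b^P(\Pi_1 v,\Pi_1 w_h)-b_h^P(\Pi_1 v,w_h)$, the latter now being a coefficient-oscillation term because both use $\Pi_1 v,\Pi_1 w_h$), and to use the orthogonality $\p_1(P)\perp(1-\Pi_1)w_h$ to kill the constant-coefficient contribution of the first difference. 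Once that splitting is set up, every surviving term carries either an explicit factor $|\bk-\overline{\bk}_P|_\infty$, $|\bb-\overline{\bb}_P|_\infty$, $|\gamma-\overline\gamma_P|_\infty$ bounded by $h_P$ times $|\bk|_{1,\infty}$, $\|\bb\|_{1,\infty}$, $|\gamma|_{1,\infty}$, or an $L^2(P)$-norm of $(1-\Pi_1)w_h$ (again $\le C_\PF h_P|w_h|_{1,P}$), so collecting constants gives $C_{\mathrm{cst}}$ depending only on $\rho$ and the coefficients of $\mathcal L$, as claimed.
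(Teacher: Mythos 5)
Your proposal follows essentially the same route as the paper: you decompose $B^P-B_h^P$ into the $a$-, $b$- and $c$-contributions, observe that the stabilization vanishes because $(1-\Pi_1)\Pi_1 v=0$, insert the piecewise-constant coefficient approximations $\Pi_0\bk,\Pi_0\bb,\Pi_0\gamma$, kill the surviving constant contributions via the $L^2$-orthogonalities $\nabla(1-\Pi_1)w_h\perp\p_0$ and $(1-\Pi_1)w_h\perp\p_1$, and bound the remainders by the Lipschitz constants of the coefficients together with the Poincar\'e--Friedrichs inequality, exactly as in the paper's $T_1,T_2,T_3$ splitting; part (b) is likewise the paper's argument up to a cosmetic change (you pass to $(1-\Pi_0)v_h$ by best-approximation of $\Pi_1$, whereas the paper uses $\int_{\partial P}(v_h-\Pi_1 v_h)\,ds=0$ from $\Pi_1=\pid_1$ in $V_h$ to invoke Lemma~\ref{2.4b} directly). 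The only slight imprecision is your citing $C_\PF$ for the zero-mean Poincar\'e inequality applied to $(1-\Pi_0)v_h$: Lemma~\ref{2.4b} as stated requires zero mean on (part of) $\partial P$, not zero mean on $P$; the paper's route via $\pid_1$ avoids this, but the zero-mean Poincar\'e constant is anyway dominated by $C_\PF$, so no genuine gap results.
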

\begin{proof}
	Observe that  $S^P((1-\Pi_1)\Pi_1 v,(1-\Pi_1)w_h)=0$ follows from $(1-\Pi_1)\Pi_1 v=0$. The definition of $B^P$ and $B_h^P$ show
	\begin{align}
	B^P(\Pi_1 v,w_h)-B_h^P(\Pi_1 v,w_h)=: T_1+T_2+T_3.\label{27}
	%\Big(a^P(\Pi_1 v,w_h)-a_h^P(\Pi_1 v,w_h)\Big)\nonumber\\&+\Big(b^P(\Pi_1 v,w_h)-b_h^P(\Pi_1 v,w_h)\Big)+\Big(c^P(\Pi_1 v,w_h)-c_h^P(\Pi_1 v,w_h)\Big).
	\end{align}
	The term $T_1$ in (\ref{27}) is defined as the difference of the  contributions from $a^P$ and $a^P_h$. Their definitions prove the equality (at the end of the first line below) and the definition of $\Pi_1$ prove the next equality in
	\begin{align*}
	T_1&:=a^P(\Pi_1 v,w_h)-a_h^p(\Pi_1 v,w_h)
	=(\bk\nabla\Pi_1 v,\nabla (1-\Pi_1) w_h)_{L^2(P)}\\
	&=((\bk-\po\bk)(\nabla\Pi_1 v),\nabla (1-\Pi_1) w_h)_{L^2(P)}\leq h_P  |\bk|_{1,\infty}|v|_{1,P}|w_h|_{1,P}.
	\end{align*}
	 The last inequality follows from the Cauchy-Schwarz inequality, the Lipschitz continuity of $\bk$, and the stabilities $\|\nabla\Pi_1v_h\|_{L^2(P)}\leq \|\nabla v_h\|_{L^2(P)}$ and $\|\nabla(1-\Pi_1)w_h\|_{L^2(P)}\leq\|\nabla w_h\|_{L^2(P)}$ from Remark~\ref{rem5}. 
	 Similar arguments apply to $T_2$ from the  differences of $b^P$ and $b^P_h$, and $T_3$ from those of $c^P$ and $c_h^P$ in (\ref{27}). This leads to 
	\begin{align*}
	T_2&:=b^P(\Pi_1v,w_h)-b_h^P(\Pi_1 v,w_h)\nonumber\\&=((\bb-\Pi_0\bb)\Pi_1v,\nabla(1-\Pi_1)w_h)_{L^2(P)}+((\Pi_0\bb)(1-\Pi_0)(\Pi_1v),\nabla(1-\Pi_1)w_h)_{L^2(P)}\nonumber\\&\leq
	 (|\bb|_{1,\infty}+C_{\mathrm{apx}}\|\bb\|_{\infty})h_P\|v\|_{1,P}|w_h|_{1,P},\\
	% ( \|\bb-\Pi_0\bb\|_{L^{\infty}(P)}\|v\|_{L^2(P)}+C_{\mathrm{apx}}|\bb|_{\infty}h_P|v|_{1,P})|w_h|_{1,P},\\
	T_3&:=c^P(\Pi_1v,w_h)-c_h^P(\Pi_1v,w_h)=(\gamma\Pi_1v,(1-\Pi_1)w_h)_{L^2(P)}\leq C_{\PF}\, \|\gamma\|_{\infty}h_P\|v\|_{L^2(P)}|w_h|_{1,P}.
	\end{align*}
	The inequality for the last step in $T_2$ follows from the Cauchy-Schwarz inequality, the Lipschitz continuity of $\bb$, the estimate $\|(1-\Pi_0)\Pi_1v\|_{L^2(P)}\leq \|(1-\Pi_0)v\|_{L^2(P)}\leq C_{\text{apx}}h_P|v|_{1,P}$ from \eqref{25.1}, and the above stabilities $\|\nabla\Pi_1v_h\|_{L^2(P)}\leq \|\nabla v_h\|_{L^2(P)}$ and $\|\nabla(1-\Pi_1)w_h\|_{L^2(P)}\leq\|\nabla w_h\|_{L^2(P)}$. The inequality for the last step in $T_3$ follows from the Cauchy-Schwarz inequality, $\|\Pi_1v\|_{L^2(P)}$ 
	$\leq\|v\|_{L^2(P)}$ from \eqref{s1} and the Poinca\'re-Friedrichs  inequality in Lemma~\ref{2.4b}.a for $w_h-\Pi_1w_h$ with $\int_{\partial P}(w_h-\Pi_1w_h)\,ds=0$ from $\Pi_1=\pid_1$ in $V_h$. The combination of the above estimates shows (\ref{25}). 
	The proof of (\ref{26}) adapts the arguments in the above analysis of $T_3$ and the definition of $\mathrm{osc}_1(f,\T)$ in Subsection~2.1 for the proof of
	\begin{align*}
	(f-f_h,w_h)_{L^2(P)}
	=(f-\Pi_1 f,w_h-\Pi_1 w_h)_{L^2(P)}\leq C_{\text{PF}}|w_h|_{1,P}\, \mathrm{osc}_1(f,P).
	%	&\leq C h_P\|f\|_{L^2(P)}|w_h|_{1,P}.
	\end{align*}
	This concludes the proof.
\end{proof}
\subsection{Nonconformity error}
Enrichment operators  play a vital role in the analysis of nonconforming finite element methods \cite{brenner2015forty}. For any $v_h\in V_h,$ the objective is to find a corresponding function $Jv_h\in H_0^1(\Omega)$. The idea is to map the VEM nonconforming space into the Crouzeix-Raviart finite element space
\begin{align*}
\text{CR}_0^1(\tT):=
\{
v\in{\cal P}_1(\tT):\h & \forall\; E\in \widehat{{\cal E}}\quad v \h\text{is continuous at mid}(E)\quad\text{and}\\\h&\forall\; E\in {\cal E}(\partial\Omega)\quad v(\text{mid}(E))=0\}
\end{align*}
with respect to the shape-regular triangulation $\tT$ from Remark~\ref{2.4c}.
Let $\psi_E$ be the edge-oriented basis functions of CR$_0^1(\tT)$ with $\psi_E(\text{mid}\h E)=1$ and $\psi_E(\text{mid}\h F)=0$ for all other edges $F\in \widehat{{\cal E}}\setminus \{E\}.$  Define   the interpolation operator $I_{\text{CR}} : V_h\to \text{CR}_0^1(\tT)$, for $v_h\in V_h$, by
\begin{align}
I_{\text{CR}}v_h=\sum_{F\in\widehat{{\cal E}}}\left(\dashint_{F}v_h\,ds\right)\psi_{F}.\label{CR}
\end{align}
 The definition of $V_h$ implies $\int_F[v_h]\,ds=0$ for  $v_h\in V_h$ and for all $F\in\e$. Since $v_h|_{P}\in H^1(P)$, it follows $\int_F[v_h]\,ds=0$ for all $F\in \widehat{{\cal E}}\setminus\e$. This shows  $\int_{F} v_{h|T^{\pm}}\,ds$ is unique for all edges $F=\partial T^+\cap\partial T^-\in\widehat{{\cal E}}$  and, consequently, $I_{\text{CR}}v_h$ is well-defined (independent of the choice of traces selected in the evaluation of $\dashint_F v_h\,ds=\dashint_F v_h|_{T^+}\,ds=\dashint_F v_h|_{T^-}\,ds$). The approximation property of $I_{\text{CR}}$ on each $T\in\tT$  reads
\begin{align}
h_T^{-1}\|v_h-I_{\text{CR}}v_h\|_{L^2(T)}+|v_h-I_{\text{CR}}v_h|_{1,T}\leq 2|v_h|_{1,T}\label{im}
\end{align}
 (cf. \cite[Thm~2.1]{cc1} or \cite[Thm~4]{cc2} for explicit constants).
\noindent
Define an enrichment operator $E_h: \text{CR}_0^1(\tT)\to H_0^1(\Omega)$ 
 by averaging the function values at each interior vertex $z$, that is,
\begin{align}
E_hv_{\text{CR}}(z)=\frac{1}{|\tT(z)|}\sum_{T\in \tT(z)}{v_{\text{CR}}}|_{T}(z)\label{3.2}
\end{align}
and zero on boundary vertices. In (\ref{3.2}) the set $\tT(z):=\{T\in \tT\h|\h z\in T\}$ of neighboring triangles  has the cardinality $|\tT(z)|\geq 3$.

 The following lemma describes the construction of a modified companion operator  $J:V_h\to H_0^1(\Omega)$, which is a right-inverse of the interpolation operator $I_h$ from Definition \ref{2.3}.
\begin{lemma}[conforming companion operator]\label{lem3.2}
	There exists a linear map $J:V_h\to H^1_0(\Omega)$ and a universal constant $C_\mathrm{J}\lesssim 1$ such that any $v_h\in V_h$ satisfies $I_hJv_h=v_h$ and
	\begin{enumerate}[label=(\alph*)]
		%\item $v_{CR}-Jv_{CR}$ is $L^2$-orthogonal to the space ${\cal P}_1(\tT).$
		\item \label{l41} $\displaystyle\dashint_{E}Jv_h\,ds = \dashint_{E}v_h\,ds$ for any edge $E\in \widehat{{\cal E}},$
		\item \label{l42} $\displaystyle \nabla_\pw(v_h-Jv_h)\perp\p_0(\T;\mathbb{R}^2)$ in $L^2(\Omega;\mathbb{R}^2),$
		\item \label{l43} $\displaystyle v_h-Jv_h\perp \p_1(\T)$ in $L^2(\Omega),$
		\item \label{l44}
		$\|h_\T^{-1}(v_h-Jv_h)\|_{L^2(\Omega)}+|v_h-Jv_h|_{1,\pw}\leq C_\mathrm{J}|v_h|_{1,\pw}.$
		%\item \label{l34} $\int_{P}\nabla(v_h-JI_{\text{CR}}v_h)\,dx=0$ for all $v_h\in V_h$ and $P\in\T$.
	\end{enumerate}
\end{lemma}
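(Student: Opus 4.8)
The plan is to construct $J$ in three stages, composing the Crouzeix--Raviart interpolation \eqref{CR}, a conforming companion operator for the Crouzeix--Raviart element on the sub-triangulation $\tT$, and a local correction by cubic volume bubbles. Two bookkeeping facts will be used repeatedly: for a function that is piecewise $H^1$ on $\tT$, its piecewise $H^1$ seminorm on $\T$ equals its piecewise seminorm on $\tT$ (so the piecewise gradient is unambiguous), and the comparison $h_T\le h_P\le C_{\text{sr}}h_T$ from Remark~\ref{2.4c} converts $\tT$-scaled and $\T$-scaled norms into each other at the price of a $\rho$-dependent constant.

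First I would pass from $v_h\in V_h$ to $I_{\text{CR}}v_h\in\text{CR}_0^1(\tT)$; its defining property gives $\dashint_F I_{\text{CR}}v_h\,ds=\dashint_F v_h\,ds$ for every $F\in\widehat{\e}$, while \eqref{im} yields $\|h_\T^{-1}(v_h-I_{\text{CR}}v_h)\|_{L^2(\Omega)}+|v_h-I_{\text{CR}}v_h|_{1,\pw}\lesssim|v_h|_{1,\pw}$. Next I would invoke a conforming companion operator $J_1:\text{CR}_0^1(\tT)\to H^1_0(\Omega)$ for the Crouzeix--Raviart element --- obtained from the averaging operator $E_h$ of \eqref{3.2} and interior-edge bubble corrections, cf.\ \cite{cc1,cc2} --- with $\dashint_F J_1w\,ds=\dashint_F w\,ds$ for all $F\in\widehat{\e}$ and $\|h_\T^{-1}(w-J_1w)\|_{L^2(\Omega)}+|w-J_1w|_{1,\pw}\lesssim|w|_{1,\pw}$ for $w\in\text{CR}_0^1(\tT)$; no correction on boundary edges is needed since $\dashint_E v_h\,ds=0$ for $E\in\e(\partial\Omega)$ by \eqref{12}. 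Setting $\widetilde{J}v_h:=J_1I_{\text{CR}}v_h\in H^1_0(\Omega)$, composition of the two edge identities gives \ref{l41} for $\widetilde{J}$ and hence $I_h\widetilde{J}v_h=v_h$ by Definition~\ref{2.3} (using $\e\subseteq\widehat{\e}$ and that the functionals $\dashint_E\bullet\,ds$ are dual to the VEM basis). Since $\p_0(\T;\R^2)=\{\nabla_\pw q:q\in\p_1(\T)\}$ and $\int_P\nabla_\pw(v_h-\widetilde{J}v_h)\cdot\nabla q\,dx=\int_{\partial P}(v_h-\widetilde{J}v_h)(\nabla q\cdot\bn_P)\,ds$ vanishes edgewise by \ref{l41}, property \ref{l42} also holds for $\widetilde{J}$; a triangle inequality combining the two stability bounds then shows $\|h_\T^{-1}(v_h-\widetilde{J}v_h)\|_{L^2(\Omega)}+|v_h-\widetilde{J}v_h|_{1,\pw}\lesssim|v_h|_{1,\pw}$.

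The third stage restores \ref{l43}. For each $T\in\tT$ with barycentric coordinates $\lambda_1,\lambda_2,\lambda_3$ and cubic bubble $b_T:=27\lambda_1\lambda_2\lambda_3$, I would work in the three-dimensional space $\mathcal{B}_T:=\text{span}\{b_T,\lambda_1b_T,\lambda_2b_T\}\subset H^1_0(\text{int}(T))$; by the explicit integration formula for products of barycentric coordinates, the $3\times3$ Gram matrix of $(\beta,q)\mapsto\int_T\beta q\,dx$ with respect to the bases $\{b_T,\lambda_1b_T,\lambda_2b_T\}$ of $\mathcal{B}_T$ and $\{1,\lambda_1,\lambda_2\}$ of $\p_1(T)$ equals $|T|$ times a fixed invertible matrix, so there is a unique $\beta_T\in\mathcal{B}_T$ with $\int_T(v_h-\widetilde{J}v_h-\beta_T)\,q\,dx=0$ for all $q\in\p_1(T)$, and Cauchy--Schwarz together with an inverse estimate on the fixed finite-dimensional space $\mathcal{B}_T$ gives $\|h_T^{-1}\beta_T\|_{L^2(T)}+|\beta_T|_{1,T}\lesssim h_T^{-1}\|v_h-\widetilde{J}v_h\|_{L^2(T)}$ with a universal constant. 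Then I would set $Jv_h:=\widetilde{J}v_h+\sum_{T\in\tT}\beta_T\in H^1_0(\Omega)$ (the $\beta_T$, extended by zero, have pairwise disjoint open supports). Because each $\beta_T$ vanishes on $\partial T$, one has $\dashint_F\beta_T\,ds=0$ and $\int_P\nabla\beta_T\cdot q_0\,dx=0$ for all $F\in\widehat{\e}$ and $q_0\in\p_0(\T;\R^2)$; hence \ref{l41} --- and therefore $I_hJv_h=v_h$ and \ref{l42} --- persists from $\widetilde{J}$, property \ref{l43} follows from $\int_P(v_h-Jv_h)q_1\,dx=\sum_{T\subseteq P}\int_T(v_h-\widetilde{J}v_h-\beta_T)q_1\,dx=0$ for $q_1\in\p_1(P)$, and summing the displayed bound over $T\in\tT$ and using the Stage~2 estimate proves \ref{l44} with $C_\mathrm{J}\lesssim1$.

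I expect the delicate point to be Stage~2 --- the precise construction, with mesh-independent constants, of a Crouzeix--Raviart conforming companion that preserves \emph{every} edge mean --- together with the structural observation behind Stage~3: the $\p_1$-correction must be supported in the triangle interiors, since a correction spread over a whole polygon would in general alter $\dashint_F Jv_h\,ds$ on the interior sub-edges $F\in\widehat{\e}\setminus\e$ and so spoil \ref{l41}; this is exactly why the correction is organized triangle-by-triangle rather than polygon-by-polygon. The remaining work --- verifying that all constants depend only on $\rho$ (through Remark~\ref{2.4c} and \cite{cc1,cc2}) and the standard affine-scaling estimates for the bubble corrections --- is routine.
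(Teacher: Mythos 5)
Your construction is correct and delivers all four properties, but the bubble correction in Stage~3 is organized differently from the paper's. The paper corrects polygon-by-polygon: it builds a single bubble $b_P=\frac{20}{9}\sum_{T\in\tT(P)}b_T\in W_0^{1,\infty}(P)$ supported in $P$ and seeks the correction in the three-dimensional space $b_P\cdot\p_1(P)$, i.e.\ it finds $v_P\in\p_1(P)$ with $\Pi_1(v_h-\widetilde J v_h)=\Pi_1(b_Pv_P)$ and sets $Jv_h:=\widetilde J v_h+\sum_{P}v_Pb_P$. You instead correct triangle-by-triangle on the refinement $\tT$, choosing $\beta_T\in\mathcal{B}_T=\operatorname{span}\{b_T,\lambda_1b_T,\lambda_2b_T\}$ with $\int_T(v_h-\widetilde J v_h-\beta_T)\,q\,dx=0$ for all $q\in\p_1(T)$. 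That enforces the stronger orthogonality $v_h-Jv_h\perp\p_1(\tT)$, which implies $v_h-Jv_h\perp\p_1(\T)$ since $\p_1(\T)\subset\p_1(\tT)$, and hence (c). Both routes are sound. What yours buys is a particularly transparent scaling argument, because the bubble space $\mathcal{B}_T$ lives on a shape-regular triangle, so the Gram matrix is $|T|$ times a fixed invertible matrix and the inverse estimates are textbook; the mesh-size bookkeeping $h_T\le h_P\le C_{\text{sr}}h_T$ from Remark~\ref{2.4c} then converts the $\tT$-scaled bound into the $\T$-scaled statement (d). Stages~1--2 agree with the paper's use of $I_{\text{CR}}$ and the Crouzeix--Raviart companion $J'$ of \cite{12}, and your derivation of (b) from (a) via an edgewise integration by parts is exactly what the paper does.

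One remark on your closing paragraph: the assertion that the $\p_1$-correction ``must be supported in the triangle interiors'' and that ``this is exactly why the correction is organized triangle-by-triangle rather than polygon-by-polygon'' overreaches. The paper's polygon bubble $b_P$ is a sum of interior triangle bubbles, so it vanishes on every sub-edge $F\in\widehat{\e}$, including the interior sub-edges of $P$; consequently a correction of the form $v_Pb_P$ with $v_P\in\p_1(P)$ preserves all edge means too, and the paper's polygon-by-polygon organization is perfectly consistent with (a). Your triangle-by-triangle organization is a valid alternative, but it is not forced by the need to preserve edge means; it simply packages the same triangle bubbles differently and upgrades the orthogonality from $\p_1(\T)$ to $\p_1(\tT)$.
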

\begin{proof}[Design of $J$ in Lemma~\ref{lem3.2}]\phantom{\qedhere}
	Given $v_h\in V_h$, let $v_{\text{CR}}:=I_{\text{CR}}v_h\in \text{CR}^1_0(\tT)$. There exists an operator $J':\text{CR}_0^1(\tT)\to H_0^1(\Omega)$ from \cite[Prop.~2.3]{12} such that any $v_\text{CR}\in\text{CR}_0^1(\tT)$ satisfies 
	\begin{enumerate}[label=(\alph*')]
		%\item $v_{CR}-Jv_{CR}$ is $L^2$-orthogonal to the space ${\cal P}_1(\tT).$
		\item \label{l31} $\displaystyle\dashint_{E}J'v_{CR}\,ds = \dashint_{E}v_{CR}\,ds$ for any edge $E\in \widehat{{\cal E}},$
		\item \label{l32} $\displaystyle\int_{P}\nabla_{\pw}(v_{\text{CR}}-J'v_{\text{CR}})\,dx=0$ for all $P\in\T$,
		\item \label{l33}
		$\displaystyle\|h_{\tT}^{-1}(v_{\text{CR}}-J'v_{\text{CR}})\|_{L^2(\Omega)}+|v_{\text{CR}}-J'v_{\text{CR}}|_{1,\pw}\leq C_\mathrm{J'}\min_{v\in H^1_0(\Omega)}|v_{\text{CR}}-v|_{1,\pw}$
		%\item \label{l34} $\int_{P}\nabla(v_h-JI_{\text{CR}}v_h)\,dx=0$ for all $v_h\in V_h$ and $P\in\T$.
	\end{enumerate} 
with a universal constant $C_\mathrm{J'}$ from \cite{8}. Set $v:=J'I_{\text{CR}}v_h\in V:=H^1_0(\Omega)$. Recall that   $\tT(P)$ is a shape-regular triangulation of $P$ into a finite number of triangles. For each $T\in\tT(P)$, let $b_T\in W_0^{1,\infty}(T)$ denote the cubic bubble-function $27\lambda_1\lambda_2\lambda_3$ for the barycentric co-ordinates $\lambda_1, \lambda_2, \lambda_3\in\p_1(T)$ of $T$ with $\dashint_Tb_T\,dx=9/20$ and $\|\nabla b_T\|_{L^2(T)}\lesssim h_T^{-1}|T|^{1/2}\approx 1.$ Let $b_T$ be extended by zero outside $T$ and, for $P\in\T$, define 
	\begin{align}
	b_P:=\frac{20}{9}\sum_{T\in\tT(P)}b_T\in W_0^{1,\infty}(P)\subset W_0^{1,\infty}(\Omega)\label{bubble}
	\end{align}
	 with 
	$
	\dashint_{P}b_P\,dx=1$ and $\|\nabla b_P\|_{L^2(P)}\lesssim h_P^{-1}|P|^{1/2}\approx 1$.
	 Let $v_P\in\p_1(\T)$ be the Riesz representation of the linear functional $\p_1(\T)\to\mathbb{R}$ defined by $w_1\mapsto(v_h-v,w_1)_{L^2(\Omega)}$ for $w_1\in\p_1(\T)$ in the Hilbert space $\p_1(\T)$ endowed with the weighted $L^2$ scalar product $(b_P\bullet,\bullet)_{L^2(P)}$. Hence $v_P$ exists uniquely and satisfies $\Pi_1(v_h-v) = \Pi_1(b_Pv_P)$. 
	Given the bubble-functions $(b_P:P\in\T)$ from \eqref{bubble} and the above functions $(v_P:P\in\T)$ for $v_h\in V_h$, define
	\begin{align}
	Jv_h:=v+\sum_{P\in\T}v_Pb_P\in V.\label{J}
	\end{align} 
	\end{proof}
	\begin{proof}[Proof of \ref{l41}]%\phantom{\qedhere}
	Since $b_P$ vanishes at any $x\in E\in \e$, it follows for any $E\in\widehat{\e}$ that
	\begin{align}
	\dashint_E Jv_h\,ds=\dashint_E v\,ds=\dashint_E J'v_{\text{CR}}\,ds=\dashint_E v_{\text{CR}}\,ds=\dashint_E v_h\,ds,\nonumber
	\end{align}
	 where the definition of $v=J'v_{\text{CR}}$, \ref{l31}, and $v_{\text{CR}}=I_{\text{CR}}v_h$ lead to the second, third, and fourth equality. This proves \ref{l41}.
	\end{proof}
	\begin{proof}[Proof of \ref{l42}]%\phantom{\qedhere}
	An integration by parts and \ref{l41}  show, for all $v_h\in V_h$ with $Jv_h$ from \eqref{J}, that
	\begin{align*}
	\int_P\nabla Jv_h\,dx=\int_{\partial P}Jv_h\bn_P\,ds=\sum_{E\in \e(P)}\Big(\int_EJv_h\bn_E\,ds\Big)=\sum_{E\in \e(P)}\Big(\int_Ev_h\bn_E\,ds\Big)=\int_P\nabla v_h\,dx.
	\end{align*}
	Since this holds for all $P\in\T$,  it proves \ref{l42}.
\end{proof}
\begin{proof}[Proof of \ref{l43}]%\phantom{\qedhere}
This is $\Pi_1v_h=\Pi_1Jv_h$ and guaranteed by the design of $J$ in \eqref{J}.
	\end{proof}
	\begin{proof}[Proof of \ref{l44}]%\phantom{\qedhere}
	This  relies on the definition of $J$ in \eqref{J} and $J'$ with \ref{l33}. Since \ref{l41} allows for $\int_{\partial P}(v_h-Jv_h)\,ds=0$, the Poincar\'e-Friedrichs inequality from Lemma~\ref{2.4b}.a implies
	\begin{align}
	h_P^{-1}\|v_h-Jv_h\|_{L^2(P)} \leq C_{\text{PF}}|v_h-Jv_h|_{1,P}.\nonumber
	\end{align}
	 Hence it remains to prove $
	|v_h-Jv_h|_{1,\pw}\lesssim |v_h|_{1,\pw}.$ 	Triangle inequalities with $v_h, Jv_h, v=J'v_{\text{CR}}$ and  $v_{\text{CR}} =I_{\text{CR}} v_h$ show the first and second inequality in
	\begin{align}
	|v_h-Jv_h|_{1,\pw}-|v-Jv_h|_{1,\pw}&\leq |v-v_h|_{1,\pw}\nonumber\\&\hspace{-1cm}\leq|v_h-I_{\text{CR}}v_h|_{1,\pw}+|v_{\text{CR}}-J'v_{\text{CR}}|_{1,\pw}\leq (1+C_\mathrm{J'})|v_h|_{1,\pw}\label{*}
	\end{align}
	with \ref{l32} for $|v_{\text{CR}}|_{1,\pw}=\|\Pi_0\nabla_{\pw}v_h\|_{L^2(\Omega)}\leq\|\nabla_{\pw}v_h\|_{L^2(\Omega)}=|v_h|_{1,\pw}$ in the last step. 
	The equivalence of norms in the finite-dimensional space $\p_1(P)$ assures the existence of  a positive constant $C_b$, independent of $h_P$, such that any $\chi\in\p_1(P)$ satisfies the inverse inequalities
	\begin{align}
	C_b^{-1}\|\chi\|^2_{L^2(P)}\leq &(b_P,\chi^2)_{L^2(P)}\leq C_b\|\chi\|^2_{L^2(P)},\label{b1}\\
	C_b^{-1}\|\chi\|_{L^2(P)}\leq\|b_P\chi\|_{L^2(P)}&+h_P\|\nabla(b_P\chi)\|_{L^2(P)}\leq C_b\|\chi\|_{L^2(P)}.\label{b2}
	\end{align}
	These estimates are completely standard on shape-regular triangles \cite[p.~27]{15} or \cite{16}; so they hold on each $T\in\tT$ and, by definition of $b_P$, their sum is \eqref{b1}-\eqref{b2}. The analysis of the term $|v-Jv_h|_{1,\pw}$ starts with one $P\in\T$ and \eqref{J} for
	\begin{align}
	|v-Jv_h|_{1,P}=|v_Pb_P|_{1,P}\leq C_bh_P^{-1}\|v_P\|_{L^2(P)}\label{j1}
	\end{align}
	with  (\ref{b2})  in the last step. The estimate (\ref{b1}) leads to the first inequality in 
	\begin{align*}
	C_b^{-1}\|v_P\|^2_{L^2(P)}\leq (b_Pv_P,v_P)_{L^2(P)}=(v_h-v,v_P)_{L^2(P)}\leq \|v_h-v\|_{L^2(P)}\|v_P\|_{L^2(P)}. 
	\end{align*}
	The  equality  results from $\Pi_1(v_h-v)=\Pi_1(v_Pb_P)$ and $v_P\in\p_1(\T)$, while the last step is the Cauchy-Schwarz inequality. Consequently,  $\|v_P\|_{L^2(P)}\leq C_b\|v_h-v\|_{L^2(P)}$. This and (\ref{j1}) show
	\begin{align*}
	|v-Jv_h|_{1,\pw}\leq C_b^2 \|h^{-1}_{\T}(v-v_h)\|_{L^2(\Omega)}\leq C_b^2C_\PF|v-v_h|_{1,\pw}
	\end{align*}
	 with 
	 $\int_{\partial P}(v-v_h)\,ds=0$ from $(a)$ and hence the Poincar\'e-Friedrichs inequality for $v-v_h$  from Lemma~\ref{2.4b}.a in the last step. Recall $|v-v_h|_{1,\pw}\lesssim |v_h|_{1,\pw}$ from   (\ref{*}) to conclude $|v-Jv_h|_{1,\pw}\lesssim |v_h|_{1,\pw}$ from the previous displayed inequality. This concludes the proof of \ref{l44}.
\end{proof}
\begin{proof}[Proof of $I_hJ = \text{id}\;\text{ in}\; V_h$]
	Definition \ref{2.3} and Lemma~\ref{lem3.2}.a show, for all $v_h\in V_h$, that
	\begin{align*}
	I_hJv_h=\sum_{E\in\e}\Big(\dashint_E Jv_h\,ds\Big)\psi_E=\sum_{E\in\e}\Big(\dashint_E v_h\,ds\Big)\psi_E=v_h.
	\end{align*}
		This concludes the proof of Lemma~\ref{lem3.2}.
\end{proof}
Since $V_h$ is not a subset of $H^1_0(\Omega)$ in general, the substitution of discrete function $v_h$  in the weak formulation leads to a nonconformity error. 
\begin{lemma}[nonconformity error]\label{l2}
	%\begin{enumerate}[label=$\left(\roman*\right)$]
	There exist  positive universal constants $C_{\NC}, C^*_{\NC}$ (depending on the coefficients $\bk,\bb$ and the universal constants  $\rho, \sigma$) such that  all $f,g\in L^2(\Omega)$ and  all $\T\in\mathbb{T}(\delta)$ (with the assumption $h_\text{max}\leq\delta\leq 1$) satisfy $(a)$ and $(b)$.\\
	 $(a)$ The solution $u\in H^{1+\sigma}(\Omega)\cap H^1_0(\Omega)$ to $(\ref{1})$ satisfies
	\begin{align}
	\sup_{0\neq v_h\in V_h}\frac{| B_\pw(u,v_h)-(f,v_h)_{L^2(\Omega)}|}{\|v_h\|_{1,\pw}}\leq C_{\NC}h_{\text{max}}^{\sigma}\|f\|_{L^2(\Omega)}.\label{31}
	\end{align}
	$(b)$ The solution $\Phi\in H^{1+\sigma}(\Omega)\cap H^1_0(\Omega)$ to the dual problem $(\ref{5})$ satisfies
	\begin{align}
	\sup_{0\neq v_h\in V_h}\frac{| B_\pw(v_h,\Phi)-(g,v_h)_{L^2(\Omega)}|}{\|v_h\|_{1,\pw}}\leq C^*_{\NC}h_{\text{max}}^{\sigma}\|g\|_{L^2(\Omega)}.\label{3.10}
	\end{align}	
%	\end{enumerate}
\end{lemma}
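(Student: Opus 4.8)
The plan is to prove $(a)$ in detail and obtain $(b)$ by the same argument applied to the adjoint operator $\mathcal L^*$. The starting point is the conforming companion operator $J:V_h\to H^1_0(\Omega)$ from Lemma~\ref{lem3.2}. Fix $v_h\in V_h$ and write $B_\pw(u,v_h)-(f,v_h)_{L^2(\Omega)}=\big(B_\pw(u,v_h)-B(u,Jv_h)\big)+\big(B(u,Jv_h)-(f,Jv_h)\big)+\big((f,Jv_h)-(f,v_h)\big)$. The middle bracket vanishes by the weak formulation \eqref{9}, since $Jv_h\in V=H^1_0(\Omega)$. So two error contributions remain: a nonconformity term $N:=B_\pw(u,v_h-Jv_h)$ and a right-hand side term $F:=(f,Jv_h-v_h)_{L^2(\Omega)}$.

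For the right-hand side term I would use property \ref{l43} of Lemma~\ref{lem3.2}, namely $v_h-Jv_h\perp\p_1(\T)$ in $L^2(\Omega)$, to subtract the piecewise polynomial projection: $F=\sum_{P\in\T}(f-\Pi_1 f,Jv_h-v_h)_{L^2(P)}$. A Cauchy--Schwarz inequality per polygon, together with the $L^2$-bound $\|h_\T^{-1}(v_h-Jv_h)\|_{L^2(\Omega)}\le C_\mathrm{J}|v_h|_{1,\pw}$ from \ref{l44}, bounds $F$ by $C_\mathrm{J}\,\mathrm{osc}_1(f,\T)\,|v_h|_{1,\pw}$; using $\mathrm{osc}_1(f,\T)\le C\,h_{\max}\|f\|_{L^2(\Omega)}$ and $h_{\max}\le h_{\max}^\sigma$ (as $h_{\max}\le\delta\le1$ and $0<\sigma\le1$) this is $\lesssim h_{\max}^\sigma\|f\|_{L^2(\Omega)}\|v_h\|_{1,\pw}$. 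For the nonconformity term $N$, split $B=a+b+c$. The key trick for $a_\pw(u,v_h-Jv_h)=(\bk\nabla_\pw u,\nabla_\pw(v_h-Jv_h))_{L^2(\Omega)}$ is to exploit \ref{l42}: $\nabla_\pw(v_h-Jv_h)\perp\p_0(\T;\R^2)$, so one may replace $\bk\nabla u$ by $(\bk\nabla u-\Pi_0(\bk\nabla u))$, or alternatively subtract the piecewise constant $\Pi_0\bk$ times $\Pi_0\nabla u$; then Cauchy--Schwarz per polygon, the bound $|v_h-Jv_h|_{1,\pw}\le C_\mathrm{J}|v_h|_{1,\pw}$, and the approximation estimate $\|(1-\Pi_0)(\bk\nabla u)\|_{L^2(P)}\lesssim h_P^{\sigma}|\bk\nabla u|_{\sigma,P}\lesssim h_P^\sigma\|u\|_{1+\sigma,P}$ (combining Lipschitz continuity of $\bk$ from \ref{A1} with Proposition~\ref{prop2.6}) give a bound $\lesssim h_{\max}^\sigma\|u\|_{1+\sigma,\Omega}|v_h|_{1,\pw}$. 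The lower-order terms $b_\pw(u,v_h-Jv_h)=(u,\bb\cdot\nabla_\pw(v_h-Jv_h))_{L^2(\Omega)}$ and $c(u,v_h-Jv_h)=(\gamma u,v_h-Jv_h)_{L^2(\Omega)}$ are handled with \ref{l42}/\ref{l43} respectively: for $b$, subtract $\Pi_0(\bb u)$ against the mean-zero gradient; for $c$, subtract $\Pi_1(\gamma u)$ against the $\p_1$-orthogonal $v_h-Jv_h$; in both cases the approximation factor contributes an extra $h_P$ (in fact $h_P^{1+\sigma}$ is available but $h_P$ suffices), and the $L^2$-bound from \ref{l44} absorbs the rest. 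Summing and using \eqref{6} to convert $\|u\|_{1+\sigma,\Omega}$ into $C_{\text{reg}}\|f\|_{L^2(\Omega)}$, and finally $|v_h|_{1,\pw}\le\|v_h\|_{1,\pw}$, yields \eqref{31}.

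For part $(b)$, the adjoint bilinear form associated to $\mathcal L^*$ in \eqref{5} is $B^*(w,v):=a(w,v)+b(v,w)+c(v,w)$ (so that $B^*(\Phi,v)=B(v,\Phi)$), and the dual weak formulation reads $B(v,\Phi)=(g,v)$ for all $v\in V$. Repeating the decomposition with $v_h-Jv_h$ in the first slot: $B_\pw(v_h,\Phi)-(g,v_h)=B_\pw(v_h-Jv_h,\Phi)+(g,Jv_h-v_h)$, the diffusion term now reads $(\bk\nabla_\pw(v_h-Jv_h),\nabla_\pw\Phi)_{L^2(\Omega)}$ and by symmetry of $\bk$ the same orthogonality argument applies with $u$ replaced by $\Phi$; the convection term $b_\pw(v_h-Jv_h,\Phi)=(v_h-Jv_h,\bb\cdot\nabla_\pw\Phi)_{L^2(\Omega)}$ is now directly $\p_1$-orthogonal-amenable via \ref{l43}; the reaction and right-hand side terms are treated verbatim. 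Invoking \eqref{6} for $\|\Phi\|_{1+\sigma,\Omega}\le C^*_{\text{reg}}\|g\|_{L^2(\Omega)}$ gives \eqref{3.10}.

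The main obstacle I anticipate is not conceptual but bookkeeping: making sure each of the three (plus the right-hand side) terms is paired with the correct orthogonality from Lemma~\ref{lem3.2} so that one genuinely gains a full power $h_P^\sigma$ (the diffusion term is the delicate one, because only $H^\sigma$-regularity of $\nabla u$ is available, so one must use \ref{l42} and the Lipschitz continuity of $\bk$ rather than any trace-based argument — this is precisely where the paper beats the $\sigma>1/2$ restriction of \cite{5}). One should also be slightly careful that the constant hidden in $\|(1-\Pi_0)(\bk\nabla u)\|_{L^2(P)}\lesssim h_P^\sigma\|u\|_{1+\sigma,P}$ depends only on $\rho$ and the coefficient bounds in \ref{A1}, and that summing the local estimates over $P$ and applying Cauchy--Schwarz in $\ell^2$ over the polygons introduces no mesh-dependent constant.
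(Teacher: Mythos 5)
Your proposal is essentially the paper's proof: insert $Jv_h$ as the conforming test function, cancel the middle term by the weak formulation \eqref{9}, and then exploit the orthogonalities \ref{l42}--\ref{l43} together with the bounds \ref{l44} on $w:=v_h-Jv_h$ and the regularity \eqref{6}; the paper merely bundles the diffusion and convection contributions into the flux $\bs=\bk\nabla u+\bb u$ and pairs the reaction term with $f$, so that the whole identity collapses to $((1-\Pi_0)\bs,\nabla_\pw w)_{L^2(\Omega)}-((1-\Pi_1)(f-\gamma u),w)_{L^2(\Omega)}$ and only two Cauchy--Schwarz steps remain. One small caveat in your leading variant of the diffusion estimate: $\|(1-\Pi_0)(\bk\nabla u)\|_{L^2(P)}\lesssim h_P^\sigma|\bk\nabla u|_{\sigma,P}$ requires a fractional ($m=\sigma<1$) Bramble--Hilbert estimate plus a product rule $|\bk\nabla u|_{\sigma,P}\lesssim\|u\|_{1+\sigma,P}$, neither of which is literally covered by Proposition~\ref{prop2.6} (stated for $1\le m$); the paper takes exactly your ``alternatively'' route $(\bk-\ol{\bk})\nabla u+\ol{\bk}(1-\Pi_0)\nabla u$, so that Lipschitz continuity of $\bk$ gives the $h_P$ factor on the first piece and only $\|(1-\Pi_0)\nabla u\|_{L^2(P)}\le\|\nabla(u-\Pi_1u)\|_{L^2(P)}\le C_{\mathrm{apx}}h_P^\sigma|u|_{1+\sigma,P}$ (Prop.~\ref{prop2.6} with $m=1+\sigma$, $k=1$) is needed on the second; you should prefer that split.
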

\begin{proof}[Proof of $(a)$]
	 Given $v_h\in V_h$, define $Jv_h\in V$ and  the piecewise averages $\ol{\bk}:=\po(\bk), \ol{\bb}:=\po(\bb)$, and $\ol{\gamma}:=\po(\gamma)$ of the coefficients $\bk, \bb$, and $\gamma$. The choice of test function $v:=Jv_h\in V$ in the weak formulation \eqref{9} having extra properties provides the terms with oscillations in the further analysis.  Abbreviate $\bs:=\bk\nabla u+\bb u$. The weak formulation (\ref{9}), Lemma~\ref{lem3.2}.b-c, and the Cauchy-Schwarz inequality  reveal that
	\begin{align}
	&B_\pw(u,v_h)-(f,v_h)_{L^2(\Omega)}=B_\pw(u,v_h-Jv_h)-(f,v_h-Jv_h)_{L^2(\Omega)}\nonumber\\
	%&=\sum_{P\in\T}\int_{P}(\bk\nabla u+u\bb)\cdot\nabla (v_h-v)+(\gamma u-f)(v_h-v)\,dx\nonumber\\
	%=(\bs-\Pi_0\bs,\nabla(1-J)v_h)+(\gamma u-f,(1-J)v_h)\nonumber\\
	&\leq \|\bs-\Pi_0\bs\|_{L^2(\Omega)}\|\nabla_\pw(1-J)v_h\|_{L^2(\Omega)}+\|h_\T(1-\Pi_1)(f-\gamma u)\|_{L^2(\Omega)}\|h_{\T}^{-1}(1-J)v_h\|_{L^2(\Omega)}.\label{3.5}
	\end{align}
	The first term on the right-hand side of (\ref{3.5})  involves the factor
\begin{align}
\|\bs-\Pi_0\bs\|_{L^2(\Omega)}&\leq \|\bk\nabla u- \Pi_0(\bk\nabla u)\|_{L^2(\Omega)}+\|\bb u-\Pi_0(\bb u)\|_{L^2(\Omega)}\nonumber\\&\leq\|(\bk-\ol{\bk})\nabla u+\ol{\bk}(1-\Pi_0)\nabla u\|_{L^2(\Omega)}+\|(\bb-\ol{\bb})u+\ol{\bb}(1-\Pi_0)u\|_{L^2(\Omega)}\nonumber\\%&\leq\left(\|\bk-\ol{\bk}\|_{\infty}\|\nabla u\|+\|\ol{\bk}\|_\infty\|(1-\Pi_0)\nabla u\|+\|\bb-\ol{\bb}\|_\infty\|u\|+\|\ol{\bb}\|_\infty\|(1-\Pi_0)u\|\right)C_J|v_h|_{1,\pw}\nonumber\\
&\leq\Big( h_{\text{max}}(|\bk|_{1,\infty}+|\bb|_{1,\infty})+C_{\text{apx}}(h_{\text{max}}^\sigma\|\bk\|_\infty+h_{\text{max}}\|\bb\|_\infty)\Big)\;\|u\|_{1+\sigma,\Omega}\nonumber.
\end{align}
The last inequality follows from the Lipschitz continuity of the coefficients $\bk$ and $\bb$, and the estimate \eqref{25.1}. Lemma~\ref{lem3.2}.d leads to the estimates  $\|\nabla_\pw(1-J)v_h\|_{L^2(\Omega)}\leq C_J|v_h|_{1,\pw}$  and 
\begin{align*}\|h_\T(1-\Pi_1)(f-\gamma u)\|_{L^2(\Omega)}\|h_{\T}^{-1}(1-J)v_h\|_{L^2(\Omega)}\leq\mathrm{osc}_1(f-\gamma u,\T)C_J|v_h|_{1,\pw}.
\end{align*}
	The substitution of the previous estimates in (\ref{3.5}) with $h_{\mathrm{max}}\leq 1$ (from $\delta\leq 1$ by assumption) and the regularity  (\ref{6}) show
	\begin{align*}
	B_\pw(u,v_h)-(f,v_h)\leq C_{\NC}h_{\text{max}}^{\sigma}\|f\|_{L^2(\Omega)}\|v_h\|_{1,\pw}
	\end{align*}
	with $C_{\NC}:=C_J\Big(( |\bk|_{1,\infty}+|\bb|_{1,\infty}+C_{\text{apx}}(\|\bk\|_\infty+\|\bb\|_\infty)+\|\gamma\|_\infty)C_{\text{reg}}+1\Big)$. This concludes the proof of Lemma~\ref{l2}.a.
%	\phantom\qedhere
	\end{proof}
	\begin{proof}
	[Proof of $(b)$]
  The  solution $\Phi\in V$ to (\ref{5}) satisfies
	$B(v,\Phi)=(g,v)_{L^2(\Omega)}$ for all $v\in V.$
	This implies
	\[ B_\pw(v_h,\Phi)-(g,v_h)_{L^2(\Omega)}= B_\pw(v_h-Jv_h,\Phi)-(g,v_h-Jv_h)_{L^2(\Omega)}.\]
	The arguments in the proof of $(a)$ lead to the bound (\ref{3.10}) with\[ C^*_{\NC}:=C_J\Big((|\bk|_{1,\infty}+C_{\text{apx}}\|\bk\|_{\infty}+\|\bb\|_{\infty}+\|\gamma\|_{\infty})C^*_{\text{reg}}+1\Big).\]
	The remaining analogous details are omitted in  the proof of Lemma~\ref{l2}.b for brevity.
\end{proof}
\section{A priori error analysis}
This section focuses on the stability, existence, and uniqueness of the discrete solution $u_h$. The \textit{a priori} error analysis uses the discrete inf-sup condition. 
\subsection{Existence and uniqueness of the discrete solution}
\begin{thm}[stability]\label{thm2.6}
There exist  positive constants $\delta\leq 1$ and $C_{\mathrm{stab}}$ (depending on $\alpha, \beta, \sigma, \rho,$ and $C_\F$) such that, for all $\T\in\mathbb{T}(\delta)$ and for all $f\in L^2(\Omega)$, the discrete problem (\ref{19}) has a unique solution $u_h\in V_h$ and 
	\begin{align*}
	|u_h|_{1,\pw}\leq C_{\mathrm{stab}}\|f_h\|_{V_h^{*}}.
	\end{align*} 
\end{thm}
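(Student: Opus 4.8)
The plan is to establish the discrete inf-sup (stability) estimate by the standard Schatz-type argument that combines the Gårding inequality from Proposition~\ref{prop2.4}\ref{b} with a duality (Aubin--Nitsche) bound on the $L^2$ norm of the discrete solution, the latter controlled through the companion operator $J$ from Lemma~\ref{lem3.2} and the nonconformity estimates of Lemma~\ref{l2}. Throughout, $\delta$ will be chosen small enough that the $h_{\text{max}}^\sigma$-terms coming from consistency and nonconformity can be absorbed.

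First I would set $u_h\in V_h$ to be any solution of the (finite-dimensional) square system \eqref{19}, so existence will follow from uniqueness once the a priori bound is proven. Apply Proposition~\ref{prop2.4}\ref{b} to get
\begin{align*}
\alpha|u_h|_{1,\pw}^2\le B_h(u_h,u_h)+\beta\|u_h\|_{L^2(\Omega)}^2
=(f_h,u_h)+\beta\|u_h\|_{L^2(\Omega)}^2
\le \|f_h\|_{V_h^*}\|u_h\|_{1,\pw}+\beta\|u_h\|_{L^2(\Omega)}^2.
\end{align*}
Using the norm equivalence $\|\cdot\|_{1,\pw}\approx|\cdot|_{1,\pw}$ in $V_h$ (Lemma~\ref{lem2.5}), it then suffices to bound $\|u_h\|_{L^2(\Omega)}$ by $h_{\text{max}}^\sigma|u_h|_{1,\pw}$ plus a controlled multiple of $\|f_h\|_{V_h^*}$; once that is in hand, for $\delta$ small the $\beta h_{\text{max}}^{2\sigma}|u_h|_{1,\pw}^2$ term is absorbed into the left side and the claimed estimate $|u_h|_{1,\pw}\le C_{\text{stab}}\|f_h\|_{V_h^*}$ drops out, whence $f_h=0\Rightarrow u_h=0$ gives uniqueness and hence existence for the square linear system.

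For the $L^2$ bound I would run the Aubin--Nitsche duality: let $\Phi\in H^{1+\sigma}(\Omega)\cap H^1_0(\Omega)$ solve the adjoint problem \eqref{5} with right-hand side $g:=u_h\in L^2(\Omega)$, so $\|\Phi\|_{1+\sigma,\Omega}\le C^*_{\text{reg}}\|u_h\|_{L^2(\Omega)}$ by \eqref{6}. Then
\begin{align*}
\|u_h\|_{L^2(\Omega)}^2=(g,u_h)_{L^2(\Omega)}=B_\pw(u_h,\Phi)
=\big(B_\pw(u_h,\Phi)-B_\pw(u_h,I_h\Phi)\big)+B_\pw(u_h,I_h\Phi),
\end{align*}
and I would insert and remove the discrete bilinear form $B_h(u_h,I_h\Phi)$, replacing $B_h(u_h,I_h\Phi)$ by $(f_h,I_h\Phi)$ via \eqref{19}. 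This splits $\|u_h\|_{L^2(\Omega)}^2$ into: (i) the nonconformity term $B_\pw(u_h,\Phi)-(g,u_h)$ handled by Lemma~\ref{l2}\ref{b}, giving $\lesssim h_{\text{max}}^\sigma\|u_h\|_{L^2(\Omega)}|u_h|_{1,\pw}$ (after correcting the $(g,u_h)$ bookkeeping — note $(g,u_h)=\|u_h\|_{L^2}^2$ here); (ii) the consistency term $B_\pw(u_h,I_h\Phi)-B_h(u_h,I_h\Phi)$, which after writing $I_h\Phi=\Pi_1 I_h\Phi+(1-\Pi_1)I_h\Phi$ and using that $\Pi_1 I_h\Phi$ is polynomial is estimated by Lemma~\ref{l1}\ref{a} (applied with $v\rightsquigarrow$ pieces of $\Phi$) together with the interpolation error bound Theorem~\ref{thm2.7}(c) and $h_P\|\Phi\|_{1+\sigma}$-type factors, again yielding $\lesssim h_{\text{max}}^\sigma\|u_h\|_{L^2}|u_h|_{1,\pw}$; and (iii) the data term $(f_h,I_h\Phi)\le\|f_h\|_{V_h^*}\|I_h\Phi\|_{1,\pw}\lesssim\|f_h\|_{V_h^*}\|\Phi\|_{1,\Omega}\lesssim\|f_h\|_{V_h^*}\|u_h\|_{L^2(\Omega)}$, using continuity of $I_h$ (Theorem~\ref{thm2.7}(a)) and elliptic regularity. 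Dividing by $\|u_h\|_{L^2(\Omega)}$ gives $\|u_h\|_{L^2(\Omega)}\lesssim h_{\text{max}}^\sigma|u_h|_{1,\pw}+\|f_h\|_{V_h^*}$, as needed.

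The main obstacle I anticipate is the careful bookkeeping in step (ii)/the duality splitting: one must track that every error term carries a genuine factor $h_{\text{max}}^\sigma$ (not merely $h_{\text{max}}^0$), which forces the use of the full $H^{1+\sigma}$ regularity of $\Phi$ and the sharp interpolation estimate \eqref{25.2} rather than just \eqref{25.1}, and one must be scrupulous about which argument of the bilinear forms is the (nonsmooth) discrete function $u_h$ and which is the (smooth, interpolated) dual variable, since $B$ and $B_h$ are non-symmetric. Choosing $\delta$ so that the combined constant in front of $h_{\text{max}}^\sigma|u_h|_{1,\pw}$, after being fed back into the Gårding inequality, is $\le \alpha/2$ (say) is then routine, and yields $C_{\text{stab}}$ depending only on $\alpha,\beta,\sigma,\rho,C_\F$ and the coefficients of $\mathcal L$.
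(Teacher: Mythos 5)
Your plan is the same Schatz/Aubin--Nitsche argument the paper runs: the G$\mathring{a}$rding inequality reduces $H^1$-stability to a bound on $\|u_h\|_{L^2(\Omega)}$; that $L^2$-bound is obtained from the adjoint solution $\Phi$ with data $g:=u_h$, decomposing $\|u_h\|_{L^2(\Omega)}^2$ into nonconformity, interpolation, consistency and data contributions; and $\delta$ is finally taken small enough to absorb the $h_{\mathrm{max}}^\sigma$-terms. Up to the bookkeeping slip that you flag yourself --- the chain $\|u_h\|_{L^2}^2=(g,u_h)=B_\pw(u_h,\Phi)$ is false since $u_h\notin H^1_0(\Omega)$, and the corrected identity is precisely the four-term split you then list, which agrees with the paper's \eqref{40} --- the architecture is identical to the paper's proof.

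One genuine gap in your step~(ii). Lemma~\ref{l1}(a) cannot be invoked to bound $B_\pw(u_h,I_h\Phi)-B_h(u_h,I_h\Phi)$: that lemma requires the \emph{first} argument of the bilinear forms to be the polynomial projection $\Pi_1 v$ of a Sobolev function, whereas here the first slot holds the discrete, non-polynomial $u_h$ and the second holds $I_h\Phi$. Splitting $I_h\Phi=\Pi_1 I_h\Phi+(1-\Pi_1)I_h\Phi$ places the polynomial in the wrong slot, and since $B$ is non-symmetric you cannot simply swap arguments. What is actually needed --- and what the paper does in the displays culminating in \eqref{43}, together with \eqref{stab} --- is a direct expansion of $a^P-a_h^P$, $b^P-b_h^P$, $c^P-c_h^P$ with $u_h$ and $I_h\Phi$ as arguments, in which the $h_P^\sigma$ factor is extracted from $(1-\Pi_1)I_h\Phi$ via Theorem~\ref{thm2.7}.b and \eqref{25.1}, and the stabilization piece $S^P\bigl((1-\Pi_1)u_h,(1-\Pi_1)I_h\Phi\bigr)$ is estimated by Cauchy--Schwarz and \eqref{13}. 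That fresh computation, not the consistency lemma, is what delivers the required $C_d h_{\mathrm{max}}^\sigma|u_h|_{1,\pw}\|\Phi\|_{1+\sigma,\Omega}$ bound for the third piece of the duality split.
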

\begin{proof}
	 In the first part of the proof, suppose there exists some solution $u_h\in V_h$ to the  discrete problem (\ref{19}) for some $f\in L^2(\Omega)$. (This is certainly true for all $f\equiv 0 \equiv u_h$, but will be discussed for all those pairs at the end of the proof and shall lead to the uniqueness of discrete solutions.) Since $u_h$ satisfies a G$\mathring{a}$rding-type inequality in Proposition~\ref{prop2.4}.b, 
	\begin{align}
	\alpha|u_h|_{1,\pw}^2&\leq\beta\|u_h\|^2_{L^2(\Omega)}+B_h(u_h,u_h) = \beta\|u_h\|^2_{L^2(\Omega)}+(f_h,u_h)_{L^2(\Omega)}.\nonumber
	\end{align}
	This, (\ref{13.1a}), and the definition of the dual norm  in (\ref{26}) lead to
	\begin{align}
	\alpha|u_h|_{1,\pw}\leq\beta C_{\text{F}}\|u_h\|_{L^2(\Omega)}+\|f_h\|_{V_h^{*}}.\label{38}
	\end{align}
	Given $g:=u_h\in L^2(\Omega)$, let $\Phi\in V\cap H^{1+\sigma}(\Omega)$ solve the dual problem ${\cal L}^*\Phi=g$ and let $I_h\Phi\in V_h$ be the interpolation of $\Phi$ from Subsection~2.4. Elementary algebra shows
	\begin{align}
	\|u_h\|^2_{L^2(\Omega)}&=\Big((g,u_h)_{L^2(\Omega)}-B_{\pw}(u_h,\Phi)\Big)+B_{\pw}(u_h,\Phi-I_h\Phi)\nonumber\\&\quad+\Big(B_{\pw}(u_h,I_h\Phi)-B_h(u_h,I_h\Phi)\Big)+(f_h,I_h\Phi)_{L^2(\Omega)}.\label{40}
	\end{align}
	 Rewrite a part of the third term corresponding to diffusion  on the right-hand side of (\ref{40}) as
	\begin{align}
	&a^P(u_h,I_h\Phi)-a_h^P(u_h,I_h\Phi) =(\bk\nabla u_h,\nabla(1-\Pi_1)I_h\Phi)_{L^2(P)}  \nonumber\\
	&+(\nabla(1-\Pi_1) u_h,(\bk-\po\bk)(\nabla\Pi_1 I_h\Phi))_{L^2(P)}%\nonumber\\&\quad
	-S^P\big((1-\Pi_1)u_h,(1-\Pi_1)I_h\Phi\big).\nonumber%\label{40.a}
	\end{align}
	The Cauchy-Schwarz inequality in the semi-scalar product $S^P(\bullet,\bullet)$, and (\ref{13}) with the upper bound $\|\bk\|_\infty$ for the coefficient $\bk$ in $a^P(\bullet,\bullet)$ lead to the estimate
	\begin{align}
	C_s^{-1}&S^P\big((1-\Pi_1)u_h,(1-\Pi_1)I_h\Phi\big)%\leq  \Big(S^P\big((1-\Pi_1)u_h,(1-\Pi_1)u_h\big)\Big)^{1/2}\Big(S^P\big((1-\Pi_1)I_h\Phi,(1-\Pi_1)I_h\Phi\big)\Big)^{1/2}\nonumber\\
	\leq   |(1-\Pi_1)u_h|_{1,P}|(1-\Pi_1)I_h\Phi|_{1,P}\nonumber\\
	&\qquad\leq \|\bk\|_{\infty}|u_h|_{1,P}\Big(\|\nabla(I_h\Phi-\Phi)\|_{L^2(P)}+\|\nabla(1-\Pi_1I_h)\Phi\|_{L^2(P)}\Big)\nonumber\\
	&\qquad\leq \|\bk\|_{\infty}C_{\text{apx}} \Big(2+C_{\PF}+C_{\text{Itn}}\Big)h_P^\sigma|u_h|_{1,P}|\Phi|_{1+\sigma,P}\label{stab}
	\end{align}
	with Theorem~\ref{thm2.7}.b followed by  \eqref{25.1} in the final step. This and Theorem~\ref{thm2.7}  imply  that
	\begin{align}
	|a^P(u_h,I_h\Phi)-a_h^P(u_h,I_h\Phi)|&\leq h_P^\sigma|u_h|_{1,P}\|\Phi\|_{1+\sigma,P}\nonumber\\&\quad\times\Big(\|\bk\|_\infty C_{\text{apx}}(2+C_{\PF}+C_{\text{Itn}})(1+C_s)+|\bk|_{1,\infty} C_{\text{Itn}}\Big).\nonumber%\label{bd1}
	\end{align}
	The terms $b^P-b_h^P$ and $c^P-c_h^P$ are controlled by
	\begin{align}
	&|b^P(u_h,I_h\Phi)-b_h^P(u_h,I_h\Phi)|+|c^P(u_h,I_h\Phi)-c_h^P(u_h,I_h\Phi)|\nonumber\\&\leq h_P^{\sigma}\|\Phi\|_{1+\sigma,P}\big(\|\bb\|_{\infty}(C_{\text{apx}}(2+C_{\PF}+C_{\text{Itn}})\|u_h\|_{L^2(P)}+C_{\mathrm{Itn}}C_{\PF}|u_h|_{1,P})\nonumber\\&\hspace{2.5cm}+\|\gamma\|_{\infty}C_{\PF}(C_{\text{Itn}}\|u_h\|_{L^2(P)}+|u_h|_{1,P})\big).\nonumber%\label{bd2}
	\end{align}
	 The combination of the previous four displayed estimates with Lemma~\ref{lem2.5} leads to an estimate for $P$. The sum  over all polygonal domains $P \in \mathcal{T} $ reads 
	\begin{align}
	 B_\pw(u_h,I_h\Phi)-B_h(u_h,I_h\Phi)\leq C_dh_{\text{max}}^{\sigma}|u_h|_{1,\pw}\|\Phi\|_{1+\sigma,\Omega}\label{43}
	\end{align}
	 with  a universal constant $C_d$. The bound for \eqref{40} results from Lemma~\ref{l2}.b for the first term, the boundedness of $B_{\pw}$ (with a universal constant $M_b:=\|\bk\|_{\infty}+C_{\F}\|\bb\|_{\infty}+C_{\F}^2\|\gamma\|_{\infty}$) and (\ref{25.2}) for the second term, (\ref{43}) for the third term,  and Theorem~\ref{thm2.7}.a for the last term on the right-hand side of (\ref{40}). This shows
	\begin{align}
	\|u_h\|^2_{L^2(\Omega)}&\leq \Big(C^*_{\NC}+C_\text{I}M_b+C_d\Big)h_{\text{max}}^{\sigma}|u_h|_{1,\pw}\|\Phi\|_{1+\sigma,\Omega}+C_{\text{Itn}}\|f_h\|_{V_h^{*}}\|\Phi\|_{1,\Omega}.\nonumber
	\end{align}
	This and the regularity estimate (\ref{6}) lead to $C_3=C^*_{\NC}+C_\text{I}M_b+C_d$  in
	\begin{align}
	\|u_h\|_{L^2(\Omega)}\leq C_3\,C^*_{\text{reg}}h_{\text{max}}^\sigma|u_h|_{1,\pw}+C_{\text{Itn}}\|f_h\|_{V_h^{*}}.\nonumber%
	\end{align}
	The substitution of this  in (\ref{38}) proves
	\begin{align}
	\alpha|u_h|_{1,\pw}\leq \beta C_{\text{F}}C_3C^*_{\text{reg}}h_{\text{max}}^\sigma|u_h|_{1,\pw}+(\beta C_{\text{F}}C_{\text{Itn}}+1)\|f_h\|_{V_h^{*}}.\label{47}
	\end{align}
	For all $0<h_{\text{max}}\leq\delta:=(\frac{\alpha}{2\beta C_{\text{F}}C_3C^*_{\text{reg}}})^{1/\sigma}$, the constant $\overline{c}=(1-\frac{\beta}{\alpha}C_{\text{F}}C_3C^*_{\text{reg}}h_{\text{max}}^\sigma)$ is positive and $C_{\text{stab}}:=\frac{\beta C_{\text{F}}C_{\text{Itn}}+1}{\alpha-\beta C_{\F}C_3C^*_{\text{reg}}h_0^{\sigma}}$  is well-defined.
	This leads in \eqref{47} to 
	\begin{align}
	|u_h|_{1,\pw}\leq C_{\text{stab}}\|f_h\|_{V_h^{*}}.\label{49}
	\end{align}
	In the last part of the proof, suppose $f_h\equiv 0$ and let $u_h$ be any solution to the resulting homogeneous linear discrete system. The stability result (\ref{49}) proves $u_h\equiv 0$. Hence, the linear system of equations (\ref{19}) has a unique solution and   the coefficient matrix is regular. This proves that there exists a unique solution $u_h$ to (\ref{19}) for any right-hand side $f_h\in V_h^*$. The combination of this with \eqref{49} concludes the proof.
\end{proof}
An immediate consequence of  Theorem~\ref{thm2.6} is the following discrete inf-sup estimate.
\begin{thm}[discrete inf-sup]
	There exist $0<\delta\leq 1$ and $\overline{\beta}_0>0$ such that, for all $\T\in\mathbb{T}(\delta)$,
	\begin{align}
	\overline{\beta}_0\leq\inf_{0\neq u_h\in V_h}\sup_{0\neq v_h\in V_h}\frac{B_h(u_h,v_h)}{|u_h|_{1,\pw}|v_h|_{1,\pw}}.\label{50}
	\end{align}
\end{thm}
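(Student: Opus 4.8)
The plan is to \emph{re-run the proof of Theorem~\ref{thm2.6}}, after the observation that that proof uses the discrete problem \eqref{19} only through the two identities $B_h(u_h,u_h)=(f_h,u_h)_{L^2(\Omega)}$ and $B_h(u_h,I_h\Phi)=(f_h,I_h\Phi)_{L^2(\Omega)}$, and on both occasions merely through the bound $(f_h,v_h)_{L^2(\Omega)}\le\|f_h\|_{V_h^*}\|v_h\|_{1,\pw}$. Fix $u_h\in V_h\setminus\{0\}$ and set
\[
\mathcal N:=\sup_{0\neq v_h\in V_h}\frac{B_h(u_h,v_h)}{|v_h|_{1,\pw}}.
\]
Since $B_h(u_h,v_h)\le\mathcal N\,|v_h|_{1,\pw}$ for every $v_h\in V_h$, the role of $\|f_h\|_{V_h^*}$ in the proof of Theorem~\ref{thm2.6} is played by $\mathcal N$ \emph{without any use of a discrete equation}, and the claim reduces to proving $|u_h|_{1,\pw}\le C\,\mathcal N$ with a mesh-independent constant $C$; then \eqref{50} follows with $\overline\beta_0:=1/C$ after taking the infimum over $u_h$. (One is tempted to argue instead that every functional on $V_h$ equals $f_h$ for some $f\in L^2(\Omega)$ and invoke the stability bound verbatim; this is false, because $(f_h,v_h)_{L^2(\Omega)}=(\Pi_1f,v_h)_{L^2(\Omega)}$ depends only on $\Pi_1f\in\p_1(\T)$, and $\Pi_1\colon V_h\to\p_1(\T)$ is far from injective.)

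\emph{Step 1 (G\aa rding-type inequality).} From Proposition~\ref{prop2.4}.b, Lemma~\ref{lem2.5}, and $B_h(u_h,u_h)\le\mathcal N\,|u_h|_{1,\pw}$,
\[
\alpha|u_h|_{1,\pw}^2\le\beta\|u_h\|_{L^2(\Omega)}^2+B_h(u_h,u_h)\le\beta C_\F\|u_h\|_{L^2(\Omega)}|u_h|_{1,\pw}+\mathcal N\,|u_h|_{1,\pw},
\]
and dividing by $|u_h|_{1,\pw}$ gives the analogue $\alpha|u_h|_{1,\pw}\le\beta C_\F\|u_h\|_{L^2(\Omega)}+\mathcal N$ of \eqref{38}.

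\emph{Step 2 ($L^2$ control by duality).} With $g:=u_h$ let $\Phi\in V\cap H^{1+\sigma}(\Omega)$ solve the dual problem \eqref{5}; then the identity \eqref{40} holds with its last term $(f_h,I_h\Phi)_{L^2(\Omega)}$ replaced by $B_h(u_h,I_h\Phi)$. Bound the first three terms exactly as in Theorem~\ref{thm2.6} --- term one by Lemma~\ref{l2}.b, term two by boundedness of $B_\pw$ together with \eqref{25.2}, term three by the estimate \eqref{43}, which holds here verbatim --- and the fourth by $B_h(u_h,I_h\Phi)\le\mathcal N\,|I_h\Phi|_{1,\pw}\le\mathcal N\,C_{\mathrm{Itn}}|\Phi|_{1,\Omega}$ via Theorem~\ref{thm2.7}.a. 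Using the regularity $\|\Phi\|_{1+\sigma,\Omega}\le C^*_{\mathrm{reg}}\|u_h\|_{L^2(\Omega)}$ from \eqref{6}, the norm--seminorm equivalence of Lemma~\ref{lem2.5}, and dividing by $\|u_h\|_{L^2(\Omega)}$ yields
\[
\|u_h\|_{L^2(\Omega)}\le C_3\,C^*_{\mathrm{reg}}\,h_{\mathrm{max}}^\sigma\,|u_h|_{1,\pw}+C_{\mathrm{Itn}}C^*_{\mathrm{reg}}\,\mathcal N
\]
with the same universal constant $C_3$ as in Theorem~\ref{thm2.6} (up to harmless factors from the norm equivalence).

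\emph{Step 3 (absorption).} Substituting Step~2 into Step~1,
\[
\alpha|u_h|_{1,\pw}\le\beta C_\F C_3 C^*_{\mathrm{reg}}h_{\mathrm{max}}^\sigma|u_h|_{1,\pw}+\bigl(\beta C_\F C_{\mathrm{Itn}}C^*_{\mathrm{reg}}+1\bigr)\mathcal N,
\]
and for all $\T\in\mathbb{T}(\delta)$ with $\delta$ small enough that $\beta C_\F C_3 C^*_{\mathrm{reg}}\delta^\sigma\le\alpha/2$ --- a threshold governed only by $\alpha,\beta,\sigma,\rho,C_\F$, so the $\delta$ of Theorem~\ref{thm2.6} (possibly shrunk) works --- the first term is absorbed and $\overline\beta_0:=\alpha/\bigl(2(\beta C_\F C_{\mathrm{Itn}}C^*_{\mathrm{reg}}+1)\bigr)\le\mathcal N/|u_h|_{1,\pw}$. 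Taking $\inf_{0\neq u_h\in V_h}$ yields \eqref{50}. There is no genuine obstacle; the only point requiring care is the one flagged in the first paragraph: rather than reducing directly to Theorem~\ref{thm2.6} via the (false) claim that every test functional is some $f_h$, one notices that the stability proof touches $(f_h,\cdot)$ only through a bound that $B_h(u_h,\cdot)$ already satisfies with constant $\mathcal N$.
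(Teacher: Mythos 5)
Your proof is correct and is essentially the paper's argument unrolled: the paper derives \eqref{50} as an immediate corollary of Theorem~\ref{thm2.6} by observing that ${\cal L}_h:V_h\to V_h^*$, $v_h\mapsto B_h(v_h,\bullet)$, is bijective with $\|{\cal L}_h^{-1}\|\le C_{\mathrm{stab}}$, interpreting that stability estimate for arbitrary $f_h\in V_h^*$ rather than only for $f_h=\Pi_1 f$. That interpretation is legitimate precisely for the reason you flag --- the proof of Theorem~\ref{thm2.6} touches $f_h$ only through the bound $(f_h,v_h)\le\|f_h\|_{V_h^*}\|v_h\|_{1,\pw}$, and its final paragraph establishes invertibility of ${\cal L}_h$ for any right-hand side in $V_h^*$ --- so the paper does not in fact commit the error you rightly warn against; your version simply makes this reduction explicit by re-running the stability estimate with $\mathcal N$ in place of $\|f_h\|_{V_h^*}$.
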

\begin{proof}
	Define the operator ${\cal L}_h:V_h\to V_h^* ,$  $v_h\mapsto B_h(v_h,\bullet)$. The stability Theorem~\ref{thm2.6} can be interpreted as follows: For any $f_h\in V_h^*$ there exists $u_h\in V_h$ such that ${\cal L}_hu_h=f_h$ and
	\begin{align*}
	\overline{\beta}_0|u_h|_{1,\pw}\leq \|f_h\|_{V_h^{*}}=\sup_{0\neq v_h\in V_h}\frac{(f_h,v_h)}{|v_h|_{1,\pw}}=\sup_{0\neq v_h\in V_h}\frac{B_h(u_h,v_h)}{|v_h|_{1,\pw}}.
	\end{align*}
	The discrete problem $B_h(u_h,\bullet)=(f_h,\bullet)$ has a unique solution in $V_h$. Therefore,  $f_h$ and $u_h$ are in one to one correspondence and  the last displayed estimate holds for any $u_h\in V_h$. The infimum over $u_h\in V_h$ therein proves (\ref{50}) with $\overline{\beta}_0=C_{\text{stab}}^{-1}$.
\end{proof}
%\section{Error analysis}
\subsection{ \textit{A priori} error estimates}
This subsection establishes the error estimate in the energy norm $|\cdot|_{1,\pw}$ and  in the $L^2$ norm. The discrete inf-sup condition allows for an error estimate in the $H^1$ norm  and an Aubin-Nitsche duality argument leads to an error estimate in the $L^2$ norm.\par Recall $u\in H^1_0(\Omega)$ is a unique solution of  \eqref{9} and $u_h\in V_h$ is a unique solution of \eqref{19}.  Recall the definition of the bilinear form $s_h(\cdot,\cdot)$ from Section~3.1 and define  the induced seminorm  $|v_h|_\s:= s_h(v_h,v_h)^{1/2}$ for $v_h\in V_h$ as a part of the norm $\|\cdot\|_h$ from Remark~\ref{rem6}.  

\begin{thm}[error estimate]\label{h1}
	  Set $\bm{\sigma}:=\bk\nabla u+\bb u\in H(\text{div},\Omega)$. There exist  positive constants $C_4, C_5,$ and $\delta$ such that, for all $\T\in\mathbb{T}(\delta)$, the discrete problem (\ref{19}) has a unique solution $u_h\in V_h$ and
	\begin{align}
 &|u-u_h|_{1,\pw}+|u-\Pi_1u_h|_{1,\pw}+h_{\mathrm{max}}^{-\sigma}(\|u-u_h\|_{L^2(\Omega)}+\|u-\Pi_1u_h\|_{L^2(\Omega)})+|u_h|_\s+|I_hu-u_h|_\s\nonumber\\&\quad\leq C_4\Big( \|(1-\Pi_0) \bm{\sigma}\|_{L^2(\Omega)}+\|(1-\Pi_0)\nabla u\|_{L^2(\Omega)}+\mathrm{osc}_1(f-\gamma u,\T)\Big)\leq C_5h_{\mathrm{max}}^\sigma\|f\|_{L^2(\Omega)}.\label{err}
	\end{align}
\end{thm}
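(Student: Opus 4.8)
The plan is to combine the discrete inf-sup estimate \eqref{50} with the consistency bound from Lemma~\ref{l1}, the nonconformity estimate from Lemma~\ref{l2}, and the interpolation error estimates of Theorem~\ref{thm2.7}, using the companion operator $J$ of Lemma~\ref{lem3.2} and a standard Aubin-Nitsche duality for the $L^2$ contributions. Throughout, the quantity $\|(1-\Pi_0)\bm{\sigma}\|_{L^2(\Omega)}+\|(1-\Pi_0)\nabla u\|_{L^2(\Omega)}+\mathrm{osc}_1(f-\gamma u,\T)$ plays the role of a unified ``data approximation'' term; the final inequality $\leq C_5 h_{\mathrm{max}}^\sigma\|f\|_{L^2(\Omega)}$ then follows at once from \eqref{25.1}, the Lipschitz continuity of the coefficients (exactly as in the estimate of $\|\bs-\Pi_0\bs\|_{L^2(\Omega)}$ inside the proof of Lemma~\ref{l2}.a), and the regularity bound \eqref{6}.

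First I would bound $|I_hu-u_h|_{1,\pw}$ via the discrete inf-sup condition: for $w_h:=I_hu-u_h$ pick $v_h$ nearly attaining the supremum in \eqref{50} and write
\[
\overline{\beta}_0|I_hu-u_h|_{1,\pw}\;\lesssim\;\sup_{0\neq v_h\in V_h}\frac{B_h(I_hu-u_h,v_h)}{|v_h|_{1,\pw}}.
\]
Now split $B_h(I_hu-u_h,v_h)=B_h(I_hu,v_h)-(f_h,v_h)$ using the discrete problem \eqref{19}, and insert $\Pi_1 I_hu$: the term $B_h(I_hu,v_h)-B_h(\Pi_1 I_hu,v_h)$ is handled by the stabilization bound \eqref{13} together with Theorem~\ref{thm2.7}.b (it produces $\|(1-\Pi_0)\nabla u\|_{L^2(P)}$-type terms), while $B_h(\Pi_1 I_hu,v_h)$ is compared first to $B^P(\Pi_1 I_hu,v_h)$ via Lemma~\ref{l1}.a (consistency, giving an $h_P\|I_hu\|_{1,P}$ factor, controlled by $\|u\|_{1,P}$ through Theorem~\ref{thm2.7}.a) and then $B_\pw(\Pi_1 I_hu,v_h)$ is compared to $B_\pw(u,v_h)$ by Theorem~\ref{thm2.7} and the Lipschitz continuity of the coefficients. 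Finally $B_\pw(u,v_h)-(f,v_h)$ is exactly the nonconformity residual, for which one repeats the computation of Lemma~\ref{l2}.a with $v:=Jv_h$ — but keeping $\|(1-\Pi_0)\bm{\sigma}\|_{L^2(\Omega)}$ and $\mathrm{osc}_1(f-\gamma u,\T)$ explicit rather than estimating them away — and $(f-f_h,v_h)$ is bounded by Lemma~\ref{l1}.b. Collecting terms gives $|I_hu-u_h|_{1,\pw}$ bounded by the data approximation term; a triangle inequality with $|u-I_hu|_{1,\pw}\lesssim\|(1-\Pi_0)\nabla u\|_{L^2(\Omega)}$ (Theorem~\ref{thm2.7}.b) then controls $|u-u_h|_{1,\pw}$, and the stability \eqref{s1} of $\Pi_1$ together with $|(1-\Pi_1 I_h)u|_{1,\pw}\lesssim\|(1-\Pi_0)\nabla u\|_{L^2(\Omega)}$ controls $|u-\Pi_1 u_h|_{1,\pw}$.

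For the stabilization seminorms, $|u_h|_\s$ and $|I_hu-u_h|_\s$: since $s_h((1-\Pi_1)v_h,(1-\Pi_1)v_h)$ is comparable to $\|\nabla_\pw(1-\Pi_1)v_h\|_{L^2(\Omega)}^2$ by \eqref{13}, and $(1-\Pi_1)I_hu$ has $\|\nabla(1-\Pi_1 I_h)u\|_{L^2(P)}\lesssim\|(1-\Pi_0)\nabla u\|_{L^2(P)}$ by Theorem~\ref{thm2.7}.b while $|I_hu-u_h|_{1,\pw}$ is already bounded, the triangle inequality gives both. For the $L^2$ estimates I would use Aubin-Nitsche: given the dual solution $\Phi$ to $\mathcal{L}^*\Phi=u-u_h$ (or its $L^2$-analogue applied to the computable error), write $\|u-u_h\|_{L^2(\Omega)}^2$ as in \eqref{40} with roles adjusted, estimate $B_\pw(u-u_h,\Phi-I_h\Phi)$ by boundedness of $B_\pw$ and \eqref{25.2} (costing one extra $h_{\mathrm{max}}^\sigma$), the consistency mismatch $B_\pw(u_h,I_h\Phi)-B_h(u_h,I_h\Phi)$ by the bound \eqref{43}, the nonconformity residual $(u-u_h,\cdot)-B_\pw(u-u_h,\Phi)$ by Lemma~\ref{l2}.b together with the already-derived $H^1$ bound on $u-u_h$, and the data term $(f-f_h,I_h\Phi)$ by Lemma~\ref{l1}.b; division by $\|u-u_h\|_{L^2(\Omega)}$ and \eqref{6} give the factor $h_{\mathrm{max}}^\sigma$ times the data approximation term. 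The same argument with $\Pi_1u_h$ in place of $u_h$, using \eqref{s1} and $\|(1-\Pi_1 I_h)u\|_{L^2(P)}\lesssim h_P\|(1-\Pi_0)\nabla u\|_{L^2(P)}$, handles $\|u-\Pi_1u_h\|_{L^2(\Omega)}$.

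The main obstacle is the bookkeeping in the $H^1$ estimate: one must track, through the chain $u_h\leftrightarrow I_hu\leftrightarrow\Pi_1 I_hu\leftrightarrow u$, that every mismatch term reduces either to $\|(1-\Pi_0)\bm{\sigma}\|_{L^2(\Omega)}$, to $\|(1-\Pi_0)\nabla u\|_{L^2(\Omega)}$, or to $\mathrm{osc}_1(f-\gamma u,\T)$ — with no stray first-order factor of $h_{\mathrm{max}}$ that is not already absorbed into one of these (in particular the consistency terms from Lemma~\ref{l1}.a carry an $h_P\|u\|_{1,P}$ which must be seen to be dominated, up to the coefficient-Lipschitz constants, by the same package after invoking \eqref{6}), and that the small-mesh threshold $\delta$ from the discrete inf-sup is respected. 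Once the $H^1$ bound by the data approximation term is in hand, the second inequality in \eqref{err} is the routine application of \eqref{25.1}, coefficient regularity, and \eqref{6} already rehearsed in Lemma~\ref{l2}.
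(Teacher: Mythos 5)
Your overall scaffolding (discrete inf--sup for $I_hu-u_h$, companion operator $Jv_h$, Aubin--Nitsche for $L^2$) matches the paper, but there is a genuine gap in how you handle the consistency and oscillation contributions, and because of it your argument would only establish the \emph{outer} bound $\leq C_5 h_{\mathrm{max}}^\sigma\|f\|_{L^2(\Omega)}$, not the sharper \emph{middle} bound $\leq C_4\bigl(\|(1-\Pi_0)\bs\|_{L^2(\Omega)}+\|(1-\Pi_0)\nabla u\|_{L^2(\Omega)}+\mathrm{osc}_1(f-\gamma u,\T)\bigr)$ in \eqref{err}.

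Concretely, in the $H^1$ estimate you route through Lemma~\ref{l1}.a with $v=I_hu$, which yields a factor $C_{\mathrm{cst}}h_P\|I_hu\|_{1,P}\lesssim h_P\|u\|_{1,P}$. That quantity is \emph{not} dominated by $\|(1-\Pi_0)\bs\|_{L^2(P)}+\|(1-\Pi_0)\nabla u\|_{L^2(P)}+\mathrm{osc}_1(f-\gamma u,P)$; invoking \eqref{6}, as you suggest, trades it for $h_P\|f\|_{L^2(\Omega)}$, which jumps straight to the rightmost bound. The paper sidesteps Lemma~\ref{l1}.a entirely in Step~2: it expands $B_h(I_hu,v_h)-B(u,Jv_h)+(f,Jv_h)-(f_h,v_h)$ term by term, sets $w:=Jv_h-\Pi_1 v_h$, and uses the two orthogonalities from Lemma~\ref{lem3.2}.b--c (namely $\nabla_\pw w\perp\p_0(\T;\mathbb{R}^2)$ and $w\perp\p_1(\T)$, the second using $\Pi_1=\pid_1$ on $V_h$) so that after elementary algebra \emph{every} remaining piece has the form $((\bk-\Pi_0\bk)(\Pi_0-1)\nabla u,\cdot)$, $(\bb(\Pi_1I_hu-u),\cdot)$, $((1-\Pi_0)\bs,\nabla_\pw w)$, or $(h_\T(1-\Pi_1)(f-\gamma u),h_\T^{-1}w)$ --- i.e.\ terms that the package $\|(1-\Pi_0)\bs\|+\|(1-\Pi_0)\nabla u\|+\mathrm{osc}_1(f-\gamma u,\T)$ genuinely controls, with Theorem~\ref{thm2.7}.b providing $\|\Pi_1I_hu-u\|_{L^2(P)}\lesssim h_P\|(1-\Pi_0)\nabla u\|_{L^2(P)}$ rather than $h_P\|u\|_{1,P}$.

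The same shortcoming reappears in your $L^2$ argument. After the consistency/nonconformity splittings you are left with a residual of the type $(f-\gamma u,\Phi-\Pi_1I_h\Phi)_{L^2(\Omega)}$. Bounding it crudely by $\|f-\gamma u\|_{L^2(\Omega)}\|\Phi-\Pi_1I_h\Phi\|_{L^2(\Omega)}$ produces $h_{\mathrm{max}}^{1+\sigma}\|f-\gamma u\|_{L^2(\Omega)}$, which is fine for the outer bound but not of the form $h_{\mathrm{max}}^\sigma\,\mathrm{osc}_1(f-\gamma u,\T)$. The paper's Step~5 does extra work here: it introduces the piecewise bubble $b_\T$, constructs a Riesz representer $\Psi_1\in\p_1(\T)$ of $\Psi:=\Phi-\Pi_1I_h\Phi$ in the weighted $L^2$ inner product, tests the continuous weak form \eqref{9} with $b_\T\Psi_1\in H^1_0(\Omega)$, and exploits the resulting $L^2$ orthogonalities $\Psi-b_\T\Psi_1\perp\p_1(\T)$ and $\nabla(b_\T\Psi_1)\perp\p_0(\T;\mathbb{R}^2)$ to land precisely on $\mathrm{osc}_1(f-\gamma u,\T)$ and $\|(1-\Pi_0)\bs\|_{L^2(\Omega)}$. (Also, the paper applies the duality argument to $g:=I_hu-u_h\in V_h$ rather than directly to $u-u_h$, which keeps the dual data discrete and cleanly separates the interpolation error $u-I_hu$.) Without these two refinements your proof establishes the final inequality in \eqref{err} but not the first one.
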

\begin{proof}
Step 1 (initialization). Let $I_hu \in V_h$ be the interpolation of $u$ from  Definition~\ref{2.3}. 
	The discrete inf-sup condition (\ref{50}) for $I_hu-u_h\in V_h$  leads to some $v_h\in V_h$ with $|v_h|_{1,\pw}\leq 1$  such that
	\begin{align}
	\overline{\beta}_0|I_hu-u_h|_{1,\pw}= B_h(I_hu-u_h,v_h).\nonumber%\label{3.34}
	\end{align}
	\noindent
	Step 2 (error estimate for $|u-u_h|_{1,\pw}$). Rewrite the last equation with the continuous  and the discrete problem (\ref{9}) and (\ref{19}) as
	\begin{align}
	\overline{\beta}_0|I_hu-u_h|_{1,\pw}=B_h(I_hu,v_h)-B(u,v)+(f,v)_{L^2(\Omega)}-(f_h,v_h)_{L^2(\Omega)}.\nonumber%\label{3.34}
	\end{align}
	 This equality is rewritten with the definition of $B(u,v)$ in  (\ref{8}), the definition of $B_h(I_hu,v_h)$ in Section~3.1, and with $f_h=\Pi_1f$. Recall $v:=Jv_h\in V$ from  Lemma~\ref{lem3.2} and recall $\nabla_{\pw}\Pi_1 I_hu=\Pi_0\nabla u$ from (\ref{i1}). This results in
	\begin{align}
	 \text{LHS} &:= \overline{\beta}_0|I_hu-u_h|_{1,\pw}-s_h((1-\Pi_1)I_hu,(1-\Pi_1)v_h)\nonumber\\&=(\bk\Pi_0\nabla u+\bb\Pi_1I_hu,\nabla_{\pw}\Pi_1 v_h)_{L^2(\Omega)}+(\gamma\Pi_1I_hu,\Pi_1v_h)_{L^2(\Omega)}-(\bm{\sigma},\nabla v)_{L^2(\Omega)}\nonumber\\
	 &\qquad+(f-\gamma u,v)_{L^2(\Omega)}-(f,\Pi_1v_h)_{L^2(\Omega)}.\nonumber
	 \end{align}
	  Abbreviate $w:=v-\Pi_1v_h$ and observe the orthogonalities $\nabla_{\pw} w\perp\p_0(\T;\mathbb{R}^2)$ in $L^2(\Omega;\mathbb{R}^2)$ and $w\perp\p_1(\T)$ in $L^2(\Omega)$ from Lemma~\ref{lem3.2}.b-c and the definition of $\Pi_1$ with $\Pi_1=\pid_1$ in $V_h$.  Lemma~\ref{lem3.2}.d,  the bound $|(1-\pid_1)v_h|_{1,\pw}\leq |v_h|_{1,\pw}\leq 1$, and the Poincar\'e-Friedrichs inequality for $v_h-\pid_1v_h$ from Lemma~\ref{2.4b}.a  lead to
	 \begin{align}|w|_{1,\pw}&\leq|v-v_h|_{1,\pw}+|v_h-\Pi_1v_h|_{1,\pw}\leq C_\mathrm{J}+1,\label{w1}\\\|h_{\T}^{-1}w\|_{L^2(\Omega)}&\leq \|h_{\T}^{-1}(v-v_h)\|_{L^2(\Omega)}+\|h_{\T}^{-1}(v_h-\Pi_1v_h)\|_{L^2(\Omega)}\leq C_\mathrm{J}+C_\PF.\label{w2}\end{align} 
	 %Let $h_{\T}\in\p_0(\T)$ be the piecewise constant mesh-size,  $h_{\T}|_P:=h_P$ for any $P\in\T$.
	   Elementary algebra and the above orthogonalities  prove that 
	 \begin{align}
	  \text{LHS}%&\hspace{2cm}\lesssim  \|(1-\Pi_0) \bm{\sigma}\|_{L^2(\Omega)}+\|(1-\Pi_0)\nabla u\|_{L^2(\Omega)}+\|h_{\T}(1-\Pi_0)(f-\gamma u)\|_{L^2(\Omega)}.\nonumber\\
	  &=((\bk-\Pi_0\bk)(\Pi_0-1)\nabla u+\bb(\Pi_1I_hu-u),\nabla_{\pw}\Pi_1 v_h)_{L^2(\Omega)}-((1-\Pi_0)\bm{\sigma},\nabla_{\pw}w)_{L^2(\Omega)}\nonumber\\&\qquad+(\gamma(\Pi_1I_hu-u),\Pi_1v_h)_{L^2(\Omega)}+(h_\T(1-\Pi_1)(f-\gamma u),h_{\T}^{-1}w)_{L^2(\Omega)}\nonumber\\
	  &
	  \leq  \Big(|\bk|_{1,\infty}+(1+C_{\PF})(\|\bb\|_{\infty}+C_{\F}\|\gamma\|_{\infty})\Big)h_{\text{max}}\|(1-\Pi_0)\nabla u\|_{L^2(\Omega)}\nonumber\\&\quad+(C_\mathrm{J}+1)\|(1-\Pi_0) \bm{\sigma}\|_{L^2(\Omega)}+(C_\mathrm{J}+C_{\text{PF}})\mathrm{osc}_1(f-\gamma u,\T)\label{3.35}
	 \end{align}
	 with the Lipschitz continuity of $\bk$, Lemma~\ref{thm2.7}.b,  the  stabilities of $\Pi_1$ from \eqref{s1}, and \eqref{w1}-\eqref{w2} in the last step. The definition of stability term (\ref{13}) and Theorem~\ref{thm2.7}.b lead to
	 \begin{align}
	 C_s^{-1}s_h((1-\Pi_1)I_hu,(1-\Pi_1)v_h)&\leq \|\bk\|_{\infty}|(1-\Pi_1)I_hu|_{1,\pw}|(1-\Pi_1)v_h|_{1,\pw}\nonumber\\&\leq \|\bk\|_{\infty} (|I_hu-u|_{1,\pw}+|u-\Pi_1I_hu|_{1,\pw})|v_h|_{1,\pw}\nonumber\\&\leq \|\bk\|_{\infty}(2+C_{\text{Itn}}+C_{\PF}) \|(1-\Pi_0)\nabla u\|_{L^2(\Omega)}|v_h|_{1,\pw}.\label{sb}
	 \end{align}
	 The triangle inequality, the bound \eqref{25.2} for the term $|u-I_hu|_{1,\pw}$, and  (\ref{3.35})-\eqref{sb} for the term $|I_hu-u_h|_{1,\pw}$ conclude the proof of (\ref{err}) for the term $|u-u_h|_{1,{\pw}}$.\\
Step $3$ (duality argument). To prove the bound for $u-u_h$ in the $L^2$ norm with a duality technique,
	let $g:=I_hu-u_h\in L^2(\Omega)$. The solution $\Phi\in H^1_0(\Omega)\cap H^{1+\sigma}(\Omega)$ to the dual problem (\ref{5})  satisfies the elliptic regularity (\ref{6}),
	\begin{align}
	\|\Phi\|_{1+\sigma,\Omega}\leq C^*_{\text{reg}}\|I_hu-u_h\|_{L^2(\Omega)}.\label{regl2}
	\end{align}
	Step $4$ (error estimate for $\|u-u_h\|_{L^2(\Omega)}$).
	 Let $I_h\Phi\in V_h$ be the interpolation of $\Phi$ from  Definition \ref{2.3}.  Elementary algebra reveals the identity
	\begin{align}
	\|g\|^2_{L^2(\Omega)}&=((g,g)_{L^2(\Omega)}-B_{\pw}(g,\Phi))+B_{\pw}(g,\Phi-I_h\Phi)\nonumber\\&\quad+(B_{\pw}(g,I_h\Phi)-B_h(g,I_h\Phi))+B_h(g,I_h\Phi).\label{L2}
	\end{align}
	   The bound (\ref{43}) with $g$ as the first argument shows
	   \begin{align*}
	   B_\pw(g,I_h\Phi)-B_h(g,I_h\Phi)\leq C_dh_{\text{max}}^{\sigma}|g|_{1,\pw}\|\Phi\|_{1+\sigma,\Omega}.
	   \end{align*}
 This controls the third term in \eqref{L2}, Lemma~\ref{l2}.b  controls the first term, the boundedness of $B_\pw$ and the interpolation error estimate (\ref{25.2}) control the second term on the right-hand side of (\ref{L2}). This results in
	\begin{align}
	\|I_hu-u_h\|^2_{L^2(\Omega)}\leq (C^*_{\NC}+C_\mathrm{I}M_b+C_d)h_{\mathrm{max}}^{\sigma}|g|_{1,\pw}\|\Phi\|_{1+\sigma,\Omega}+B_h(g,I_h\Phi).\label{4.15}
	\end{align}
It remains to bound  $B_h(g,I_h\Phi)$. The continuous  and the discrete problem (\ref{9}) and (\ref{19}) imply
	\begin{align}
	B_h(g,I_h\Phi)=B_h(I_hu,I_h\Phi)-B(u,\Phi)+(f,\Phi)_{L^2(\Omega)}-(f_h,I_h\Phi)_{L^2(\Omega)}.\nonumber
	\end{align}
The definition of $B_h$ and $\Pi_0$  lead to
\begin{align}
&B_h(g,I_h\Phi)-s_h((1-\Pi_1)I_hu,(1-\Pi_1)I_h\Phi)\nonumber\\
&=((\bk-\Pi_0\bk)(\Pi_0-1)\nabla u+\bb(\Pi_1 I_hu-u),\nabla_{\pw}\Pi_1I_h\Phi)_{L^2(\Omega)}+(\gamma(\Pi_1I_hu-u),\Pi_1I_h\Phi)_{L^2(\Omega)}\nonumber\\&\qquad-((1-\Pi_0)\bm{\sigma},\nabla_{\pw}(1-\Pi_1I_h)\Phi)_{L^2(\Omega)}+(f-\gamma u,\Phi-\Pi_1I_h\Phi)_{L^2(\Omega)}.\label{4.15n}
\end{align}	
The bound for the stability term as in (\ref{sb}) is
\begin{align}
s_h((1-\Pi_1)I_hu,(1-\Pi_1)I_h\Phi)&\leq C_s\|\bk\|_{\infty}|(1-\Pi_1)I_hu|_{1,\pw}|(1-\Pi_1)I_h\Phi|_{1,\pw}\nonumber\\ &\hspace{-2cm}\leq C_s\|\bk\|_{\infty}(2+C_{\mathrm{Itn}}+C_{\PF})^2 C_{\text{apx}}h_{\text{max}}^{\sigma}\|(1-\Pi_0)\nabla u\|_{L^2(\Omega)}|\Phi|_{1+\sigma,\Omega}.\label{4.17}
\end{align}
Step $5$ (oscillation). The last term in (\ref{4.15n}) is of optimal order $O(h_{\text{max}}^{1+\sigma})$, but the following arguments allow to write it as an oscillation. Recall the bubble-function $b_{\T}|_P:=b_P\in H^1_0(P)$ from (\ref{bubble}) extended by zero outside $P$. Given $\Psi:=\Phi-\Pi_1I_h\Phi$, let $\Psi_1\in\p_1(\T)$ be the Riesz representation of the linear functional $\p_1(\T)\to\R$ defined by $w_1\mapsto(\Psi,w_1)_{L^2(\Omega)}$ in the Hilbert space $\p_1(\T)$ endowed with the weighted scalar product $(b_\T\bullet,\bullet)_{L^2(\Omega)}$. That means  $\Pi_1(b_\T\Psi_1)=\Pi_1\Psi$.  The identity $(f-\gamma u,b_{\T}\Psi_1)_{L^2(\Omega)}=(\bm{\sigma},\nabla(b_\T\Psi_1))_{L^2(\Omega)}$ follows from (\ref{9}) with the test function $b_{\T}\Psi_1\in H^1_0(\Omega)$. The $L^2$ orthogonalities $\Psi-b_{\T}\Psi_1\perp\p_1(\T)$ in $L^2(\Omega)$ and $\nabla( b_{\T}\Psi_1)\perp\p_0(\T;\mathbb{R}^2)$ in $L^2(\Omega;\mathbb{R}^2)$ allow the rewriting of the latter identity as
\begin{align}
&(f-\gamma u,\Psi)_{L^2(\Omega)}=(h_\T(1-\Pi_1)(f-\gamma u),h_{\T}^{-1}(\Psi-b_{\T}\Psi_1))_{L^2(\Omega)}+((1-\Pi_0)\bm{\sigma},\nabla(b_\T\Psi_1))_{L^2(\Omega)}\nonumber\\&\qquad\leq\mathrm{osc}_1(f-\gamma u,\T)\|h_{\T}^{-1}(\Psi-b_{\T}\Psi_1)\|_{L^2(\Omega)}+\|(1-\Pi_0)\bs\|_{L^2(\Omega)}|b_\T\Psi|_{1,\pw}.\label{4.16a}
\end{align}	
It remains to control the terms  $\|h_{\T}^{-1}(\Psi-b_{\T}\Psi_1)\|_{L^2(\Omega)}$ and $|b_\T\Psi|_{1,\pw}$. Since the definition of $I_h$ and the definition of $\pid_1$ with $\Pi_1=\pid_1$ in $V_h$ imply $\int_{\partial P}\Psi\,ds=\int_{\partial P}(\Phi-\Pi_1I_h\Phi)\,ds=0$, this allows the Poincar\'e-Friedrichs inequality for $\Psi$ from Lemma~\ref{2.4b}.a on each $P\in\T$. This shows \begin{align}
\|h_\T^{-1}\Psi\|_{L^2(\Omega)}\leq C_\PF|\Psi|_{1,\pw}\leq C_\PF C_{\text{apx}} h_{\text{max}}^{\sigma} |\Phi|_{1+\sigma,\Omega}
\end{align}
 with Theorem~\ref{thm2.7}.b and (\ref{25.1}) in the last inequality. Since $b_P\Psi_1\in H^1_0(P)$ for $P\in\T$, the Poincar\'e-Friedrichs inequality from Lemma~\ref{2.4b}.a leads to
\begin{align}
\|h_P^{-1}(b_P\Psi_1)\|_{L^2(P)}\leq C_\PF|b_P\Psi_1|_{1,P}.\label{4.17a}
\end{align}
 The first estimate in \eqref{b1}, the identity $\Pi_1(b_\T\Psi_1)=\Pi_1\Psi$, and  the Cauchy-Schwarz inequality imply 
\begin{align*}
C_b^{-1}\|h_{P}^{-1}\Psi_1\|_{L^2(P)}^2\leq \|h_{P}^{-1}b_{P}^{1/2}\Psi_1\|_{L^2(P)}^2=(h_{P}^{-1}\Psi_1,h_{P}^{-1}\Psi)_{L^2(P)}\leq \|h_{P}^{-1}\Psi_1\|_{L^2(P)}\|h_{P}^{-1}\Psi\|_{L^2(P)}.%\label{4.19a}
\end{align*}
  This proves $\|h_{P}^{-1}\Psi_1\|_{L^2(P)}\leq C_b \|h_{P}^{-1}\Psi\|_{L^2(P)}$. The second estimate in \eqref{b2} followed by the first estimate in \eqref{b1} leads to the first inequality and the arguments as above lead to the second inequality in
\begin{align}
C_{b}^{-3/2} |b_P\Psi_1|_{1,P}\leq\|h_{P}^{-1}b_{P}^{1/2}\Psi_1\|_{L^2(P)}\leq\|h_{P}^{-1}\Psi_1\|_{L^2(P)}^{1/2}\|h_{P}^{-1}\Psi\|_{L^2(P)}^{1/2}&\leq C_{b}^{1/2}\|h_{P}^{-1}\Psi\|_{L^2(P)}\nonumber
\end{align}
with $\|h_{P}^{-1}\Psi_1\|_{L^2(P)}^{1/2}\leq C_b^{1/2} \|h_{P}^{-1}\Psi\|_{L^2(P)}^{1/2}$  from above in the last step.    The combination of the previous displayed estimate and \eqref{4.16a}-\eqref{4.17a} results with $C_6 := C_\PF C_{\text{apx}}(1+C_b^2(1+C_\PF))$ in
\begin{align}
(f-\gamma u,\Psi)_{L^2(\Omega)}\leq C_6 (\mathrm{osc}_1(f-\gamma u,\T)+\|(1-\Pi_0)\bs\|_{L^2(\Omega)})h_{\text{max}}^{\sigma}|\Phi|_{1+\sigma,\Omega}.\label{4.19}
\end{align}
 Step $6$ (continued proof of estimate for $\|u-u_h\|_{L^2(\Omega)}$). The estimate in Step 2 for $|g|_{1,\pw}$,  (\ref{4.15})-(\ref{4.17}), and (\ref{4.19}) with the regularity (\ref{regl2})  show
\begin{align}
\|I_hu-u_h\|_{L^2(\Omega)}\lesssim h_{\mathrm{max}}^{\sigma}\Big(\|(1-\Pi_0)\nabla u\|_{L^2(\Omega)}+\|(1-\Pi_0)\bm{\sigma}\|_{L^2(\Omega)}+\mathrm{osc}_1(f-\gamma u,\T)\Big).\label{4.18}
\end{align}
Rewrite the difference $u-u_h= (u-I_hu)+(I_hu-u_h)$, and apply the triangle inequality with (\ref{25.2}) for the first term
\[\|u-I_hu\|_{L^2(\Omega)}\leq C_\mathrm{I}h_{\text{max}}^{1+\sigma}|u|_{1+\sigma,\Omega}.\]
 This and (\ref{4.18}) for the second term $I_hu-u_h$ conclude the proof of the estimate  for the term $h_{\text{max}}^{-\sigma}\|u-u_h\|_{L^2(\Omega)}$ in  \eqref{err} .\\
 Step $7$ (stabilisation error $|u_h|_\s$ and $|I_hu-u_h|_\s$).  The triangle inequality and the upper bound of the stability term \eqref{13} lead to
 \begin{align*}
 |u_h|_\s\leq |I_hu-u_h|_\s+|I_hu|_\s\leq C_s^{1/2}\|\bk\|_\infty^{1/2}(|I_hu-u_h|_{1,\pw}+|(1-\Pi_1)I_hu|_{1,\pw})
 \end{align*}
 with $|(1-\Pi_1)(I_hu-u_h)|_{1,\pw}\leq |I_hu-u_h|_{1,\pw}$ in the last inequality. The arguments as in \eqref{sb} prove that $|(1-\Pi_1)I_hu|_{1,\pw}\leq (2+C_{\text{Itn}}+C_\PF)\|(1-\Pi_0)\nabla u\|_{L^2(\Omega)}$. This and the arguments in Step~2 for the estimate of $|I_hu-u_h|_{1,\pw}$ show the upper bound in \eqref{err} for the terms $|u_h|_\s$ and $|I_hu-u_h|_\s$.\\
 Step $8$ (error estimate for $u-\Pi_1u_h$). The VEM solution $u_h$ is defined by the computed degrees of freedom given in \eqref{10.2}, but the evaluation of the function itself requires expansive additional calculations. The later are avoided if $u_h$ is replaced by the Ritz projection $\Pi_1u_h$ in the numerical experiments. The triangle inequality leads to
 \begin{align}
 |u-\Pi_1u_h|_{1,\pw}\leq |u-u_h|_{1,\pw}+|u_h-\Pi_1u_h|_{1,\pw}.\label{4.25}
 \end{align}
 A lower bound of the stability term (\ref{13}) and the assumption \ref{A2} imply
 \begin{align}
 |u_h-\Pi_1u_h|_{1,P}\leq a_0^{-1/2} C_s^{1/2}S^P((1-\Pi_1)u_h,(1-\Pi_1)u_h)^{1/2}.\label{5.11}
 \end{align}
  This shows that the second term in \eqref{4.25} is bounded by $|u_h|_\s$.  Hence Step 2 and Step 7 prove the estimate for $|u-\Pi_1u_h|_{1,\pw}$. Since $\int_{\partial P}(u_h-\Pi_1u_h)\,ds=0$ from the definition of $\pid_1$ and $\Pi_1=\pid_1$ in $V_h$, the combination of Poincar\'e-Friedrichs inequality for $u_h-\Pi_1u_h$ from Lemma~\ref{2.4b}.a  and (\ref{5.11}) result in
 \begin{align}
 C_{\PF}^{-1} a_0^{1/2}C_s^{-1/2}\|u_h-\Pi_1u_h\|_{L^2(P)}\leq h_PS^P((1-\Pi_1)u_h,(1-\Pi_1)u_h)^{1/2}.\label{5.19}
 \end{align}
 The analogous arguments for $\|u-\Pi_1u_h\|_{L^2(\Omega)}$, \eqref{5.19}, and the  estimate for $|u_h|_\s$ prove the bound \eqref{err} for the term $h_{\text{max}}^{-\sigma}\|u-\Pi_1u_h\|_{L^2(\Omega)}$. This concludes the proof of Theorem~\ref{h1}.
\end{proof} 

\section{\textit{A posteriori} error analysis} 
This section presents the reliability and efficiency of a residual-type \textit{a posteriori} error estimator. 
\subsection{Residual-based explicit \textit{a posteriori} error control}
Recall $u_h\in V_h$ is the solution to the problem \eqref{19}, and the definition of jump $[\cdot]_E$ along an edge $E\in\e$ from Section~2.
For any polygonal domain $P\in\T$, set
\begin{center}
\begin{tabular}{lr}
$\ds\eta_P^2:=h_P^2\|f-\gamma\Pi_1u_h\|_{L^2(P)}^2$&(volume residual),\\
$\ds\zeta_P^2:=S^P((1-\Pi_1)u_h,(1-\Pi_1)u_h)$&(stabilization),\\
$\ds\Lambda_P^2:=\|(1-\Pi_0)(\bk\nabla\Pi_1u_h+\bb\Pi_1u_h)\|_{L^2(P)}^2$&(inconsistency),\\
$\ds\Xi_{P}^2:=\sum_{E\in \e(P)}|E|^{-1}\|[\Pi_1u_h]_E\|_{L^2(E)}^2$&(nonconformity).
\end{tabular}
\end{center}
These local quantities $\bullet|_P$ form a family ($\bullet|_P:P\in\T$) over the index set $\T$ and their Euclid vector norm $\bullet|_\T$ enters the upper error bound: $\eta_{\T}:=(\sum_{P\in\T}\eta_P^2)^{1/2}$,  $\zeta_\T:=(\sum_{P\in\T}\zeta_P^2)^{1/2}$, $\Lambda_\T:=(\sum_{P\in\T}\Lambda_P^2)^{1/2}$, and  $\Xi_\T:=(\sum_{P\in\T}\Xi_P^2)^{1/2}$. The following theorem provides an upper bound to the error $u-u_h$ in the $H^1$ and the $L^2$ norm. Recall the elliptic regularity \eqref{6} with the index  $0<\sigma\leq 1$, and recall the assumption $h_\text{max}\leq 1$ from Subsection~2.1.
\begin{thm}[reliability]\label{5.2}
	There exist  positive constants $C_{\text{rel}1}$ and $C_{\text{rel}2}$  (both depending on $\rho$) such that
	\begin{align}
	C_{\mathrm{rel}1}^{-2}|u-u_h|_{1,\pw}^2\leq  \eta_\T^2+\zeta_\T^2+\Lambda_\T^2+\Xi_{\T}^2\label{5.22}
	\end{align}
	and
	\begin{align}
	\|u-u_h\|_{L^2(\Omega)}^2\leq C_{\mathrm{rel}2}^{2} \sum_{P\in\T}\Big(h_P^{2\sigma}(\eta_P^2+\zeta_P^2+\Lambda_P^2+\Xi_{P}^2)\Big).\label{5.22a}
	\end{align}
\end{thm}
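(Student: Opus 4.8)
The plan is to prove the energy estimate \eqref{5.22} first and then to deduce the $L^2$ estimate \eqref{5.22a} by an Aubin--Nitsche duality argument. Throughout, abbreviate $w_h:=\Pi_1u_h\in\p_1(\T)$ and the discrete flux $\bq_h:=\bk\nabla_\pw w_h+\bb w_h$, so that $\Lambda_P=\|(1-\Pi_0)\bq_h\|_{L^2(P)}$ and $B_\pw(w_h,v)=(\bq_h,\nabla v)_{L^2(\Omega)}+(\gamma w_h,v)_{L^2(\Omega)}$ for $v\in H^1_0(\Omega)$. Two preliminary reductions isolate the stabilisation and the nonconformity. The lower bound in \eqref{13} with \ref{A2} gives, as in \eqref{5.11} and \eqref{5.19}, the local estimates $|u_h-w_h|_{1,P}\lesssim\zeta_P$ and $\|u_h-w_h\|_{L^2(P)}\lesssim h_P\zeta_P$. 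A conforming enrichment of $w_h$ (the averaging operator $E_h$ composed with an edgewise-averaged $\mathrm{CR}$ interpolation of $w_h$ onto $\mathrm{CR}_0^1(\tT)$ and then the operator $J'$ of \cite{12}) produces $w_h^c\in H^1_0(\Omega)$ with the standard averaging bound $h_P^{-1}\|w_h-w_h^c\|_{L^2(P)}+|w_h-w_h^c|_{1,P}\lesssim\Xi_P$ up to contributions of neighbouring polygons; summation yields $\|h_\T^{-1}(w_h-w_h^c)\|_{L^2(\Omega)}+|w_h-w_h^c|_{1,\pw}\lesssim\Xi_\T$. Since $h_\T\le 1$ and $\sigma\le1$, the two differences enter the $L^2$ bound with the claimed local weights, $\|h_\T(u_h-w_h)\|_{L^2(\Omega)}^2+\|h_\T(w_h-w_h^c)\|_{L^2(\Omega)}^2\lesssim\sum_{P\in\T}h_P^{2\sigma}(\zeta_P^2+\Xi_P^2)$, and it remains to control $u-w_h^c\in H^1_0(\Omega)$.

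The key is a residual identity. Rewrite the discrete problem \eqref{19} tested with $v_h\in V_h$ as $(\bq_h,\nabla_\pw\Pi_1v_h)_{L^2(\Omega)}-(f-\gamma w_h,\Pi_1v_h)_{L^2(\Omega)}+s_h((1-\Pi_1)u_h,(1-\Pi_1)v_h)=0$, take $v_h:=I_h\varphi$ for $\varphi\in H^1_0(\Omega)$, and use $\nabla_\pw\Pi_1I_h\varphi=\Pi_0\nabla\varphi$ from \eqref{i1} and Remark~\ref{rem2}, so that $\nabla\varphi-\nabla_\pw\Pi_1I_h\varphi=(1-\Pi_0)\nabla\varphi$. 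Subtracting this from $B_\pw(u-w_h,\varphi)=(f-\gamma w_h,\varphi)_{L^2(\Omega)}-(\bq_h,\nabla\varphi)_{L^2(\Omega)}$ gives
\begin{equation*}
B_\pw(u-w_h,\varphi)=(f-\gamma w_h,\varphi-\Pi_1I_h\varphi)_{L^2(\Omega)}-((1-\Pi_0)\bq_h,(1-\Pi_0)\nabla\varphi)_{L^2(\Omega)}+s_h((1-\Pi_1)u_h,(1-\Pi_1)I_h\varphi).
\end{equation*}
Theorem~\ref{thm2.7}.b delivers $\|\varphi-\Pi_1I_h\varphi\|_{L^2(P)}\le(1+C_\PF)h_P\|(1-\Pi_0)\nabla\varphi\|_{L^2(P)}$ and $|(1-\Pi_1)I_h\varphi|_{1,P}\le(2+C_{\mathrm{Itn}}+C_\PF)\|(1-\Pi_0)\nabla\varphi\|_{L^2(P)}$; a Cauchy--Schwarz inequality (also in the semi-scalar product $s_h$, with \eqref{13}) then bounds the three terms by $(\eta_\T+\Lambda_\T+\zeta_\T)\,|\varphi|_{1,\Omega}$. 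For \eqref{5.22}, choose $\varphi$ from the continuous inf-sup condition \eqref{9.1} applied to $u-w_h^c$; then $\beta_0\|u-w_h^c\|_{1,\Omega}\le B(u-w_h^c,\varphi)=B_\pw(u-w_h,\varphi)+B_\pw(w_h-w_h^c,\varphi)\lesssim\eta_\T+\Lambda_\T+\zeta_\T+\Xi_\T$ by the boundedness of $B_\pw$ and $|w_h-w_h^c|_{1,\pw}\lesssim\Xi_\T$, and a triangle inequality with the reductions concludes \eqref{5.22}.

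For \eqref{5.22a}, let $z\in H^1_0(\Omega)\cap H^{1+\sigma}(\Omega)$ solve the dual problem $\mathcal{L}^*z=u-u_h$ with $\|z\|_{1+\sigma,\Omega}\le C^*_{\mathrm{reg}}\|u-u_h\|_{L^2(\Omega)}$ from \eqref{6}. Pairing $\mathcal{L}^*z=u-u_h\in L^2(\Omega)$ with the splitting $u-u_h=(u-w_h^c)-(w_h-w_h^c)-(u_h-w_h)$, the last two contributions are at most $(\sum_{P\in\T}h_P^2(\Xi_P^2+\zeta_P^2))^{1/2}\|u-u_h\|_{L^2(\Omega)}$ by the reductions, and $(u-w_h^c,\mathcal{L}^*z)_{L^2(\Omega)}=B(u-w_h^c,z)=B_\pw(u-w_h,z)+B_\pw(w_h-w_h^c,z)$. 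The residual identity of the previous paragraph, now with test function $z$, handles $B_\pw(u-w_h,z)$; the decisive gain is that $z$ is fractionally smoother, so Theorem~\ref{thm2.7} and Proposition~\ref{prop2.6} supply an extra factor $h_P^\sigma$, namely $\|z-\Pi_1I_hz\|_{L^2(P)}\lesssim h_P^{1+\sigma}|z|_{1+\sigma,P}$, $\|(1-\Pi_0)\nabla z\|_{L^2(P)}\le|z-\Pi_1z|_{1,P}\lesssim h_P^\sigma|z|_{1+\sigma,P}$ and $|(1-\Pi_1)I_hz|_{1,P}\lesssim h_P^\sigma|z|_{1+\sigma,P}$. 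Each residual term is then of the form $\sum_{P\in\T}h_P^\sigma\,\bullet_P\,|z|_{1+\sigma,P}$ with $\bullet\in\{\eta,\Lambda,\zeta\}$, which a Cauchy--Schwarz inequality over $P\in\T$, together with the perfect localisation $\sum_{P\in\T}|z|_{1+\sigma,P}^2\le|z|_{1+\sigma,\Omega}^2$ of the Sobolev--Slobodeckij seminorm, turns into $(\sum_{P\in\T}h_P^{2\sigma}\bullet_P^2)^{1/2}|z|_{1+\sigma,\Omega}$. The nonconformity correction $B_\pw(w_h-w_h^c,z)$ is split into its lower-order parts, bounded by $\|w_h-w_h^c\|_{L^2(\Omega)}\|z\|_{1+\sigma,\Omega}\lesssim(\sum_{P\in\T}h_P^2\Xi_P^2)^{1/2}\|u-u_h\|_{L^2(\Omega)}$, and the diffusion part $(\bk\nabla_\pw(w_h-w_h^c),\nabla z)_{L^2(\Omega)}$; for the latter one writes $\bk=\Pi_0\bk+(1-\Pi_0)\bk$ and $\nabla z=\Pi_0\nabla z+(1-\Pi_0)\nabla z$, so that the $(1-\Pi_0)$-contributions again carry an $h_P^\sigma$ against $|z|_{1+\sigma,P}$, while the remaining piecewise-mean contribution is rewritten through $\int_P\nabla_\pw(w_h-w_h^c)=\int_{\partial P}(w_h-w_h^c)\bn_P$ and controlled by a weighted Cauchy--Schwarz that distributes the mesh-size powers using the fine (nearly edge-integral-preserving) properties of the enrichment. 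Collecting these bounds, dividing by $\|u-u_h\|_{L^2(\Omega)}$, invoking \eqref{6}, and assembling $\|u-u_h\|_{L^2(\Omega)}$ by a final triangle inequality yields \eqref{5.22a}.

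The manipulations around the residual identity and the continuous inf-sup step are routine; the main obstacle is the $L^2$ estimate \eqref{5.22a}. Two points demand care: first, to keep every estimator weighted by the \emph{local} $h_P^{2\sigma}$ rather than a coarser $h_{\max}^{2\sigma}$, the factor $h_P^\sigma$ must be carried along with each residual contribution and Cauchy--Schwarz against $|z|_{1+\sigma,\cdot}$ applied only \emph{after} the sum over $P$; second, and most delicately, the diffusion part of the nonconformity correction $(\bk\nabla_\pw(w_h-w_h^c),\nabla z)$ couples the nonconformity to the piecewise-mean component of $\nabla z$, which does not see the fractional regularity of $z$, so producing the weight $\sum_P h_P^{2\sigma}\Xi_P^2$ here forces an argument exploiting the boundary-integral representation of $\int_P\nabla_\pw(w_h-w_h^c)$ and a careful choice of the conforming enrichment. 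This term is the crux of the proof; once it is handled, the remaining estimates follow the pattern already established in the energy estimate.
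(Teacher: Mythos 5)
Your overall architecture matches the paper's: reduce to a conforming function by a companion operator, isolate the stabilisation and nonconformity contributions, derive a residual identity using the discrete problem with $v_h=I_h\varphi$ and \eqref{i1}, close the $H^1$ bound with the continuous inf-sup \eqref{9.1}, and close the $L^2$ bound by Aubin--Nitsche duality. The $H^1$ part of your argument is essentially correct (the paper uses $E_1u_h:=E_\pw\Pi_1u_h$ for this step, and any conforming enrichment of $\Pi_1u_h$ with $|w_h-w_h^c|_{1,\pw}\lesssim\Xi_\T$ does the job).

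For the $L^2$ estimate, however, there is a genuine gap at exactly the term you correctly single out as the crux. You propose to enrich $w_h=\Pi_1u_h$ (via CR interpolation with edge-averaging followed by an enrichment operator) and then to control the piecewise-mean part of $(\bk\nabla_\pw(w_h-w_h^c),\nabla z)_{L^2(\Omega)}$ through ``fine (nearly edge-integral-preserving) properties of the enrichment.'' This does not close. Since $\Pi_1u_h$ has non-vanishing edge jump means, any enrichment of $\Pi_1u_h$ preserves $\int_E w_h\,ds$ only up to $O(\int_E[\Pi_1u_h]_E\,ds)$, so
\[
\Bigl|\int_P\nabla_\pw(w_h-w_h^c)\,dx\Bigr|=\Bigl|\int_{\partial P}(w_h-w_h^c)\bn_P\,ds\Bigr|\lesssim h_P\,\Xi_P,
\]
and the ensuing bound of the mean--mean pairing with $\Pi_0\nabla z$ produces $\sum_P\Xi_P\|\nabla z\|_{L^2(P)}\lesssim\Xi_\T\|z\|_{1,\Omega}$, which after \eqref{6} yields $\|u-u_h\|_{L^2(\Omega)}\lesssim\Xi_\T$ \emph{without} the required factor $h_P^\sigma$. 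The paper avoids this by choosing the companion $E_2u_h:=J'I_{\mathrm{CR}}u_h$ built from $u_h$ itself, not from $\Pi_1u_h$: the mean-zero jump property of $u_h\in V_h$ gives $\int_E u_h\,ds=\int_E E_2u_h\,ds$ for all $E\in\e$, hence $\int_P\nabla(u_h-E_2u_h)\,dx=0$, and the Ritz-projection link $\nabla\Pi_1u_h=\Pi_0\nabla u_h$ upgrades this to the exact orthogonality $\int_P\nabla_\pw(\Pi_1u_h-E_2u_h)\,dx=0$. That orthogonality removes the $(\Pi_0\bk)(\Pi_0\nabla z)$ pairing entirely, leaving only $(1-\Pi_0)$-factors that carry the needed $h_P^\sigma$ against $|z|_{1+\sigma,P}$ (this is precisely the split in \eqref{5.15a}). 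Without that exact cancellation, your proposed treatment of the diffusion part of the nonconformity correction loses a power of the mesh-size, and \eqref{5.22a} with the local weights $h_P^{2\sigma}$ is not obtained.
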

The proof of this theorem in Subsection~5.3 relies on a  conforming companion operator elaborated in the next subsection. The upper bound in Theorem~\ref{5.2}  is efficient in the following local sense, where $\omega_E:=\textrm{int}(\cup \T(E))$ denotes the patch of an edge $E$ and consists of the one or the two neighbouring polygons in the set $\T(E):=\{P'\in\T:E\subset\partial P'\}$ that share $E$. Recall $\bs =\bk\nabla u+\bb u$ from Subsection~4.2 and the data-oscillation $\mathrm{osc}_1(f,P):= \|h_P(1-\Pi_1)f\|_{L^2(P)}$ from Subsection~2.1.
\begin{thm}[local efficiency up to oscillation]\label{efficiency}
	The quantities $\eta_P, \zeta_P, \Lambda_P,$ and $\Xi_P$ from Theorem~\ref{5.2} satisfy
	\begin{align}
	\zeta^2_P&\lesssim |u-u_h|^2_{1,P}+|u-\Pi_1u_h|^2_{1,P}\label{lb}\\
	\eta_P^2&\lesssim \|u-u_h\|^2_{1,P}+|u-\Pi_1u_h|^2_{1,P}+\|(1-\Pi_0)\bs\|_{L^2(P)}^2+\mathrm{osc}_1^2(f-\gamma u,P),\label{lb1}\\
	\Lambda_P^2&\lesssim \|u-u_h\|^2_{1,P}+|u-\Pi_1u_h|^2_{1,P}+\|(1-\Pi_0)\bs\|_{L^2(P)}^2,\label{lb3}\\
	\Xi_P^2&\lesssim \sum_{E\in\e(P)}\sum_{P'\in\omega_E}( \|u-u_h\|^2_{1,P'}+|u-\Pi_1u_h|^2_{1,P'}).\label{lb2}
	\end{align}
\end{thm}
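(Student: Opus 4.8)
\emph{Proof sketch.} The plan is to bound each of the four local quantities separately by residual–efficiency arguments adapted to the virtual element setting. Two auxiliary estimates are used throughout: first, $|u_h-\Pi_1u_h|_{1,P}\le|u-u_h|_{1,P}+|u-\Pi_1u_h|_{1,P}$ (triangle inequality, together with $\nabla\Pi_1u_h=\Pi_0\nabla u_h$ from Remark~\ref{rem2}, so that $\nabla(u_h-\Pi_1u_h)=(1-\Pi_0)\nabla u_h$); and second, $\|u_h-\Pi_1u_h\|_{L^2(P)}\le C_{\PF}h_P|u_h-\Pi_1u_h|_{1,P}$ from Lemma~\ref{2.4b}.a, which applies since $\int_{\partial P}(u_h-\Pi_1u_h)\,ds=0$ because $\Pi_1=\pid_1$ on $V_h$; in particular $\|u-\Pi_1u_h\|_{L^2(P)}\le\|u-u_h\|_{L^2(P)}+C_{\PF}h_P(|u-u_h|_{1,P}+|u-\Pi_1u_h|_{1,P})$. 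The first step handles $\zeta_P$: since $(1-\Pi_1)u_h\in V_h(P)$ has vanishing $\Pi_1$-image, the upper bound in \eqref{13} gives $\zeta_P^2\le C_s\|\bk\|_\infty|(1-\Pi_1)u_h|_{1,P}^2$, and $|(1-\Pi_1)u_h|_{1,P}=|u_h-\Pi_1u_h|_{1,P}$ together with the auxiliary estimate yields \eqref{lb}.

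The second step treats $\Lambda_P$. Writing $\bk\nabla\Pi_1u_h+\bb\Pi_1u_h=\bs+\bk\nabla(\Pi_1u_h-u)+\bb(\Pi_1u_h-u)$ and using that $1-\Pi_0$ is an $L^2$-orthogonal projection, one gets $\Lambda_P\le\|(1-\Pi_0)\bs\|_{L^2(P)}+\|\bk\|_\infty|u-\Pi_1u_h|_{1,P}+\|\bb\|_\infty\|u-\Pi_1u_h\|_{L^2(P)}$, and the auxiliary estimate for $\|u-\Pi_1u_h\|_{L^2(P)}$ (with $h_P\le1$) gives \eqref{lb3}. The third step is the bubble-function argument for $\eta_P$. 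Split $f-\gamma\Pi_1u_h=\Pi_1(f-\gamma\Pi_1u_h)+(1-\Pi_1)(f-\gamma\Pi_1u_h)$; the high-frequency part is controlled via $(1-\Pi_1)(f-\gamma\Pi_1u_h)=(1-\Pi_1)(f-\gamma u)+(1-\Pi_1)\gamma(u-\Pi_1u_h)$ by $h_P^{-1}\mathrm{osc}_1(f-\gamma u,P)+\|\gamma\|_\infty\|u-\Pi_1u_h\|_{L^2(P)}$. For $w_1:=\Pi_1(f-\gamma\Pi_1u_h)\in\p_1(P)$, test the weak form \eqref{9} with $b_Pw_1\in H^1_0(P)\subset H^1_0(\Omega)$ to obtain $(f-\gamma\Pi_1u_h,b_Pw_1)_{L^2(P)}=(\bs,\nabla(b_Pw_1))_{L^2(P)}+(\gamma(u-\Pi_1u_h),b_Pw_1)_{L^2(P)}$; since $b_Pw_1$ vanishes on $\partial P$, $\nabla(b_Pw_1)\perp\p_0(P;\R^2)$, so $(\bs,\nabla(b_Pw_1))_{L^2(P)}=((1-\Pi_0)(\bk\nabla\Pi_1u_h+\bb\Pi_1u_h),\nabla(b_Pw_1))_{L^2(P)}+(\bk\nabla(u-\Pi_1u_h)+\bb(u-\Pi_1u_h),\nabla(b_Pw_1))_{L^2(P)}$. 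Combining with the inverse inequalities \eqref{b1}--\eqref{b2} ($C_b^{-1}\|w_1\|_{L^2(P)}^2\le(b_Pw_1,w_1)_{L^2(P)}$ and $\|b_Pw_1\|_{L^2(P)}+h_P\|\nabla(b_Pw_1)\|_{L^2(P)}\le C_b\|w_1\|_{L^2(P)}$) and the identity $(w_1,b_Pw_1)_{L^2(P)}=(f-\gamma\Pi_1u_h,b_Pw_1)_{L^2(P)}-((1-\Pi_1)(f-\gamma\Pi_1u_h),b_Pw_1)_{L^2(P)}$ yields $h_P\|w_1\|_{L^2(P)}\lesssim\Lambda_P+|u-\Pi_1u_h|_{1,P}+\|u-\Pi_1u_h\|_{L^2(P)}+\mathrm{osc}_1(f-\gamma u,P)$; then $\eta_P\le h_P\|w_1\|_{L^2(P)}+\mathrm{osc}_1(f-\gamma u,P)+h_P\|\gamma\|_\infty\|u-\Pi_1u_h\|_{L^2(P)}$ and the already proven bound \eqref{lb3} for $\Lambda_P$ give \eqref{lb1}.

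The fourth step is the nonconformity term $\Xi_P$, which is the most delicate. For each $E\in\e(P)$, use $[u]_E=0$ to split $[\Pi_1u_h]_E=[\Pi_1u_h-u_h]_E+[u_h-u]_E$. On every neighbouring polygon $P'\in\omega_E$, a trace inequality plus Lemma~\ref{2.4b}.a (Poincar\'e–Friedrichs for $(1-\Pi_1)u_h|_{P'}$) and $|E|\ge\rho h_{P'}$ from \ref{M2} give $|E|^{-1}\|(u_h-\Pi_1u_h)|_{P'}\|_{L^2(E)}^2\lesssim h_{P'}^{-2}\|(1-\Pi_1)u_h\|_{L^2(P')}^2+|(1-\Pi_1)u_h|_{1,P'}^2\lesssim|(1-\Pi_1)u_h|_{1,P'}^2$, handled by the first step. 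For $[u_h-u]_E$ one must avoid an uncontrollable term of the form $h^{-1}\|u-u_h\|_{L^2}$; the key point is that $[u_h-u]_E$ has vanishing integral mean over $E$ — because $\int_E[u_h]_E\,ds=0$ by the definition \eqref{12} of $V_h$ (and $\int_Eu_h\,ds=\int_Eu\,ds=0$ for a boundary edge) — so $\|[u_h-u]_E\|_{L^2(E)}\le\sum_{P'\in\omega_E}\|(u_h-u)-\dashint_E(u_h-u)\,ds\|_{L^2(E)}$, and the trace–Poincar\'e estimate $\|v-\dashint_Ev\,ds\|_{L^2(E)}\lesssim h_{P'}^{1/2}|v|_{1,P'}$ (derived from $\|v-\dashint_{P'}v\,dx\|_{L^2(E)}\lesssim h_{P'}^{1/2}|v|_{1,P'}$ via a trace inequality and Lemma~\ref{2.4b}.a) combined with $|E|\ge\rho h_{P'}$ gives $|E|^{-1/2}\|(u_h-u)-\dashint_E(u_h-u)\,ds\|_{L^2(E)}\lesssim|u-u_h|_{1,P'}$. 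Summing over $E\in\e(P)$ and $P'\in\omega_E$ and bounding $|\cdot|_{1,P'}\le\|\cdot\|_{1,P'}$ yields \eqref{lb2}. I expect this last step to be the main obstacle: one has to exploit the edge-mean cancellation built into the definition of $V_h$ so that only $H^1$-seminorms of the error, rather than a rescaled $L^2$-norm, appear on the right-hand side; by contrast, the bubble-function bookkeeping for $\eta_P$ is routine but lengthy.
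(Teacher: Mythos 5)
Your proposal is correct and follows essentially the same route as the paper for all four bounds: the upper stability estimate for $\zeta_P$, the $\Pi_0$-orthogonality and splitting $\bs_h=\bs+(\bs_h-\bs)$ for $\Lambda_P$, the bubble-function argument on $R=\Pi_1(f-\gamma\Pi_1u_h)$ with the oscillation split for $\eta_P$, and the edge-mean cancellation built into $V_h$ together with a trace plus Poincar\'e--Friedrichs estimate for $\Xi_P$ (the paper realises the latter by inserting $\alpha_E:=\dashint_E(u-u_h)\,ds$ and applying Lemma~\ref{2.4b}.b, whereas you split $[\Pi_1u_h]_E$ into two jumps and subtract the edge mean on each side, which is an equivalent bookkeeping). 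The only small imprecision is the citation of Lemma~\ref{2.4b}.a for a Poincar\'e estimate with zero volume mean rather than zero boundary mean; the cleaner reference is Lemma~\ref{2.4b}.b applied to the function with subtracted edge mean, exactly as the paper does.
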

The proof of Theorem~\ref{efficiency} follows in Subsection~5.4. The reliability and efficiency estimates in Theorem~\ref{5.2} and \ref{efficiency} lead to an equivalence up to the approximation term 
\[\text{apx} := \|\bs-\Pi_0\bs\|_{L^2(\Omega)}+\mathrm{osc}_1(f-\gamma u,\T).\]  Recall the definition of $|u_h|_\s$ from Subsection~4.2. In this paper, the norm $|\cdot|_{1,\pw}$ in the nonconforming space $V_h$ has been utilised for simplicity and one alternative is the norm $\|\cdot\|_h$ from Remark~\ref{rem6} induced by $a_h$. Then it appears natural to have the total error with the stabilisation term as 
\[\text{total error} := |u-u_h|_{1,\pw}+|u-\Pi_1u_h|_{1,\pw}+h_{\text{max}}^{-\sigma}\|u-u_h\|_{L^2(\Omega)}+h_{\text{max}}^{-\sigma}\|u-\Pi_1u_h\|_{L^2(\Omega)}+|u_h|_\s.\]
The point is that Theorem~\ref{h1} assures that total error $+$ apx converges with the expected optimal convergence rate. 
\begin{cor}[equivalence]  The $\mathrm{estimator} := \eta_\T+\zeta_\T+\Lambda_\T+\Xi_\T \approx  \mathrm{total\; error} + \mathrm{apx}$.
\end{cor}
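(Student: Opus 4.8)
The plan is to read off the asserted two-sided bound from three results already in place: the reliability estimate (Theorem~\ref{5.2}), the local efficiency estimates (Theorem~\ref{efficiency}), and the \textit{a priori} estimate (Theorem~\ref{h1}), combined with elementary algebra. Two observations will be used throughout. First, $\zeta_\T=|u_h|_\s$ directly from the definitions of $\zeta_P$ and of $s_h$. Second, because $h_{\mathrm{max}}\le1$ and $0<\sigma\le1$, any unweighted $L^2$-norm of the error is bounded by its $h_{\mathrm{max}}^{-\sigma}$-weighted version, so in particular $\|u-u_h\|_{1,\pw}\lesssim\mathrm{total\ error}$ and $|u-\Pi_1u_h|_{1,\pw}\lesssim\mathrm{total\ error}$.

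For the direction $\mathrm{estimator}\lesssim\mathrm{total\ error}+\mathrm{apx}$ I would simply sum the local efficiency bounds \eqref{lb}--\eqref{lb2} over $P\in\T$. The only non-cosmetic point is the edge-patch sum on the right-hand side of \eqref{lb2}: the mesh regularity \ref{M1}--\ref{M2} bounds the number of edges of each polygon and the cardinality of each edge patch, so every polygon is counted only a bounded number of times and $\Xi_\T^2\lesssim\|u-u_h\|_{1,\pw}^2+|u-\Pi_1u_h|_{1,\pw}^2$. Together with $\zeta_\T=|u_h|_\s$ and $\|(1-\Pi_0)\bs\|_{L^2(\Omega)}+\mathrm{osc}_1(f-\gamma u,\T)=\mathrm{apx}$, this direction is complete.

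For the reverse bound, \eqref{5.22} gives $|u-u_h|_{1,\pw}\lesssim\mathrm{estimator}$, and \eqref{5.22a} with $h_P\le h_{\mathrm{max}}$ gives $h_{\mathrm{max}}^{-\sigma}\|u-u_h\|_{L^2(\Omega)}\lesssim\mathrm{estimator}$. The two terms involving $\Pi_1u_h$ are reduced by a triangle inequality to $u_h-\Pi_1u_h$, and \eqref{5.11} and \eqref{5.19} (summed over $P\in\T$) give $|u_h-\Pi_1u_h|_{1,\pw}\lesssim|u_h|_\s$ and $\|u_h-\Pi_1u_h\|_{L^2(\Omega)}\lesssim h_{\mathrm{max}}|u_h|_\s$, the latter contributing only the harmless factor $h_{\mathrm{max}}^{1-\sigma}\le1$ to the $h_{\mathrm{max}}^{-\sigma}$-weighted $L^2$-term. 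Since also $|u_h|_\s=\zeta_\T$, this proves $\mathrm{total\ error}\lesssim\mathrm{estimator}$. It then remains to absorb $\mathrm{apx}$: the splitting $\bs-\bk\nabla\Pi_1u_h-\bb\Pi_1u_h=\bk\nabla(u-\Pi_1u_h)+\bb(u-\Pi_1u_h)$ bounds $\|(1-\Pi_0)\bs\|_{L^2(\Omega)}$ by $\Lambda_\T$ plus a part of the total error, and inserting $f-\gamma\Pi_1u_h$ into $(1-\Pi_1)(f-\gamma u)$ together with the contraction property of $1-\Pi_1$ bounds $\mathrm{osc}_1(f-\gamma u,\T)$ by $\eta_\T+\|\gamma\|_\infty h_{\mathrm{max}}\|u-\Pi_1u_h\|_{L^2(\Omega)}\le\eta_\T+\|\gamma\|_\infty h_{\mathrm{max}}^{-\sigma}\|u-\Pi_1u_h\|_{L^2(\Omega)}$. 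Hence $\mathrm{apx}\lesssim\mathrm{estimator}+\mathrm{total\ error}\lesssim\mathrm{estimator}$, and combined with the first direction this yields the equivalence.

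I do not expect a genuine obstacle: everything reduces to Theorems~\ref{5.2}, \ref{efficiency}, \ref{h1} plus routine estimates, and the only ``new'' input is the two splittings of $\bs$ and of $f-\gamma u$ against their discrete counterparts in the last step. The two places requiring a little care are the finite-overlap argument for $\Xi_\T$ and the systematic use of $h_{\mathrm{max}}\le1$, $\sigma\le1$ to trade unweighted $L^2$-errors for their $h_{\mathrm{max}}^{-\sigma}$-scaled versions.
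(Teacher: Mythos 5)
Your proposal is correct and follows essentially the same route as the paper: sum the local efficiency bounds of Theorem~5.4 for one direction (noting $\zeta_\T=|u_h|_\s$ and the finite-overlap of edge patches), and for the reverse combine Theorem~5.2 with the reductions from~\eqref{5.11},~\eqref{5.19} for the $\Pi_1u_h$ terms and the two splittings $\bs-\bs_h$ and $\mathrm{osc}_1(f-\gamma u)\le\mathrm{osc}_1(f-\gamma\Pi_1u_h)+\|h_\T\gamma(u-\Pi_1u_h)\|_{L^2}$ to absorb apx. A small cosmetic remark: Theorem~4.3 is not actually needed in your argument and the paper does not invoke it here either; the corollary rests on the reliability/efficiency theorems alone.
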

\vspace{-0.6cm}
\begin{proof}\phantom\qedhere
	Theorem~\ref{efficiency} motivates apx and  shows
	\[\mathrm{estimator}\lesssim \|u-u_h\|_{1,\pw}+\|\bs-\Pi_0\bs\|_{L^2(\Omega)}+\mathrm{osc}_1(f-\gamma u,\T)+|u_h|_\s\leq \mathrm{total\; error} + \mathrm{apx}.\]  
	This proves the first inequality $\lesssim$ in the assertion.
%\end{proof}
%\begin{proof}[Proof of second inequality in \eqref{a3}]
	Theorem~\ref{5.2}, the estimates in Subsection~5.3.3.1, and the definition of $|u_h|_s$  show $\text{total error} \lesssim \text{estimator}$.	The first of the terms in apx is
	\[\|\bs-\Pi_0\bs\|_{L^2(\Omega)}\leq \|\bs-\Pi_0\bs_h\|_{L^2(\Omega)}\leq \|\bs-\bs_h\|_{L^2(\Omega)}+\|(1-\Pi_0)\bs_h\|_{L^2(\Omega)}.\]	
	The definition of $\bs$ and $\bs_h$ plus the triangle and the Cauchy-Schwarz inequality show 
	\begin{align*}
	\|\bs-\bs_h\|_{L^2(\Omega)}\leq \|\bk\|_\infty|u-\Pi_1u_h|_{1,\pw}+\|\bb\|_{\infty}\|u-\Pi_1u_h\|_{L^2(\Omega)}\lesssim \|u-\Pi_1u_h\|_{1,\pw}.
	\end{align*}
	The upper bound is $\lesssim$ estimator as mentioned above. Since the  term $\|(1-\Pi_0)\bs_h\|_{L^2(\Omega)}= \Lambda_\T$ is a part of the estimator, $\|(1-\Pi_0)\bs\|_{L^2(\Omega)}\lesssim \mathrm{estimator}$.  The other term in apx is
	\begin{align*}
	\mathrm{osc}_1(f-\gamma u,\T) &\leq \mathrm{osc}_1(f-\gamma\Pi_1u_h,\T)+\|h_\T\gamma(u-\Pi_1u_h)\|_{L^2(\Omega)}\\&\leq \eta_\T+\|\gamma\|_\infty h_{\text{max}}\|u-\Pi_1u_h\|_{L^2(\Omega)}\lesssim \text{estimator}. \qed
	\end{align*}
\end{proof}

Section~5 establishes the \textit{a posteriori} error analysis of the nonconforming VEM. Related results are known for the conforming VEM and the nonconforming FEM.
\begin{rem}[comparison with nonconforming FEM]
	Theorem~\ref{5.2} generalizes a result for the nonconforming FEM in \cite[Thm.~3.4]{11} from triangulations into triangles to those in polygons (recall  Example~\ref{ex}). The only difference is the extra stabilization term that can be dropped in the nonconforming FEM.
\end{rem}
\begin{rem}[comparison with conforming VEM]
	The volume residual, the inconsistency term, and the stabilization also arise in the \textit{a posteriori} error estimator   for the conforming VEM in \cite[Thm.~13]{6}. But it also  includes an additional term with normal jumps compared to the estimator \eqref{5.22}. The extra nonconformity term  in this paper  is caused by  the nonconformity $V_h\not\subset V$ in general.
\end{rem}

\subsection{Enrichment and conforming companion operator}
 The link from the nonconforming approximation $u_h\in V_h$ to a global Sobolev function in $H^1_0(\Omega)$ can be designed  with the help of the underlying refinement $\tT$ of the triangulation $\T$ (from Section~2). The interpolation $I_{\text{CR}}:V+V_h\to \textrm{CR}^1_0(\tT)$ in the Crouzeix-Raviart finite element space $\textrm{CR}^1_0(\tT)$ from Subsection~3.4 allows for a right-inverse $J'$. A  companion operator $J'\circ I_{\text{CR}}:V_h\to H^1_0(\Omega)$ acts as displayed
  \tikzstyle{line} = [draw, -latex']
 \begin{figure}[H]
 	\begin{center}
 		\begin{tikzpicture}[node distance = 1.5cm, auto]
 		\node(step1){$V_h$};
 		%	\node[right of = step1,node distance=4cm](step2){$\text{CR}^1_0(\tT)$};
 		\node[right of =step1,node distance=4cm](step2){$\text{CR}^1_0(\tT)$};
 		\node[right of = step2,node distance=4cm](step3){$H^1_0(\Omega)$};
 		%\path[line](step1)--node[above,text centered]{$I_{\text{CR}}$}(step2);
 		%	\path[line](step2)--node[right,text centered]{$E_h$}(step3);
 		\path[line](step2)--node[above,text centered]{$J'$}(step3);
 		\path[line](step1)--node[above,text centered]{$I_{\text{CR}}$}(step2);
 		\end{tikzpicture}
 	\end{center}
 \end{figure}
 \vspace{-1cm} 
Define an  enrichment operator $E_\pw:\p_1(\tT)\to S^1_0(\tT)$ by averaging  nodal values: For any  vertex $z$ in the refined triangulation $\tT$, let $\tT(z)=\{T\in\tT:z\in T\}$ denote the set of $|\tT(z)|\geq 1$ many triangles that share the vertex $z$, and define 
\begin{align*}
E_\pw v_1(z)=\frac{1}{|\tT(z)|}\sum_{T\in \tT(z)}{v_1}|_{T}(z)
\end{align*}
for an interior  vertex $z$ (and zero for a boundary vertex $z$ according to the homogeneous boundary conditions). This defines $E_\pw v_1$ at any vertex of a triangle $T$ in $\tT$, and  linear interpolation  then defines $E_\pw v_1$ in $T\in\tT$, so that $E_\pw v_1\in S^1_0(\tT)$. Huang \textit{et al.} \cite{HUANG2021113229} design an enrichment operator by an extension of \cite{dG} to polygonal domains, while we deduce it from a sub-triangulation.  The following lemma provides an approximation property of the operator $E_\pw$.
\begin{lemma}\label{lem5.3}
	There exists a positive constant $C_{En}$ that depends only on the shape regularity of $\tT$ such that any $v_1\in\p_1(\T)$ satisfies
	\begin{align}
	\|h_{\T}^{-1}(1-E_\pw) v_1\|_{L^2(\Omega)}+	|(1-E_\pw) v_1|_{1,\pw}\leq C_\mathrm{En}\left(\sum_{E\in{\e}}|E|^{-1}\|[v_1]_E\|_{L^2(E)}^2\right)^{1/2}.\label{P2}
	\end{align}
\end{lemma}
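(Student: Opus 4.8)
The plan is to deduce \eqref{P2} from the standard estimate for the nodal averaging (enrichment) operator on the shape-regular triangulation $\tT$ of Remark~\ref{2.4c}, applied to $v_1\in\p_1(\T)\subset\p_1(\tT)$, and then to discard the jump contributions that vanish. The crucial structural observation is that every edge of $\tT$ is either a polygonal edge $E\in\e$ or an interior segment $\mathrm{conv}(c_P,z)$ joining the centre $c_P$ of some $P\in\T$ to one of its vertices $z$; since $v_1|_P\in\p_1(P)$ is a single affine function and each interior segment lies in one such $P$, the function $v_1$ has no jump across any interior edge of $\tT$. Hence the only surviving jumps are those across $E\in\e$, which feeds exactly the right-hand side of \eqref{P2} (with the convention $[v_1]_E=v_1|_E$ for $E\subset\partial\Omega$).

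First I would reduce the left-hand side of \eqref{P2} to the nodal discrepancies of $v_1$. For $T\in\tT$ the function $(1-E_\pw)v_1|_T\in\p_1(T)$ is determined by its three vertex values, so the equivalence of norms on $\p_1(T)$, a scaling argument, and the shape regularity of $\tT$ (together with $h_T\approx h_P\approx|E|$ from Remark~\ref{2.4c} and \ref{M2}) give
\begin{align*}
h_P^{-2}\|(1-E_\pw)v_1\|_{L^2(T)}^2+|(1-E_\pw)v_1|_{1,T}^2\lesssim\sum_{z\in\mathcal{N}(T)}\big|v_1|_T(z)-E_\pw v_1(z)\big|^2.
\end{align*}
At a polygon centre $z=c_P$ all triangles of $\tT(z)$ lie in $P$, so $v_1|_T(z)-E_\pw v_1(z)=0$; for a general vertex $z$ I would write
\begin{align*}
v_1|_T(z)-E_\pw v_1(z)=\frac{1}{|\tT(z)|}\sum_{T'\in\tT(z)}\big(v_1|_T(z)-v_1|_{T'}(z)\big)
\end{align*}
and bound each difference by telescoping along a chain $T=T_0,T_1,\dots,T_k=T'$ in $\tT(z)$ whose consecutive members share an edge containing $z$. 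An increment $v_1|_{T_i}(z)-v_1|_{T_{i+1}}(z)$ vanishes when the shared edge is interior and equals $\pm[v_1]_E(z)$ when the shared edge is a polygonal edge $E\in\e$ with endpoint $z$; at a boundary vertex the chain ends at boundary edges, where $[v_1]_E=v_1|_E$, and the same conclusion holds. Because $\tT$ is shape-regular, the valence $|\tT(z)|$ and all chain lengths $k$ are bounded solely in terms of $\rho$, so $\big|v_1|_T(z)-E_\pw v_1(z)\big|^2\lesssim\sum_{E}|[v_1]_E(z)|^2$, the sum over the $O(1)$ edges $E\in\e$ incident to $z$.

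It then remains to pass from nodal to $L^2$ jumps: since $v_1|_P$ is affine, $[v_1]_E$ is an affine function on the segment $E$, so $|[v_1]_E(z)|^2\lesssim|E|^{-1}\|[v_1]_E\|_{L^2(E)}^2$ by norm equivalence and scaling in one dimension. Summing the two displayed estimates over $T\in\tT$ and reorganising the finitely-overlapping sum over $E\in\e$ (each $E$ feeding only $O(1)$ nodal terms) produces \eqref{P2} with $C_\mathrm{En}$ depending only on the shape regularity of $\tT$, hence only on $\rho$. I expect the telescoping/chain argument around each vertex — and the accompanying bookkeeping that keeps all constants dependent only on $\rho$ (through the valence bound for $\tT$ and the constant $C_{\text{sr}}$ of Remark~\ref{2.4c}) — to be the main obstacle; the remaining ingredients are routine norm-equivalence and scaling estimates on triangles and on edges.
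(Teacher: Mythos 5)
Your proof is correct and its conclusion matches the paper's, but it takes a more self-contained route. The paper's proof is essentially a citation: it invokes the standard averaging estimate from \cite[p.~2378]{dG} on the shape-regular triangulation $\tT$, which bounds the left-hand side by the jump sum over \emph{all} edges of $\tT$, and then observes (i) that $v_1|_P\in\p_1(P)$ is continuous in each $P$, so jumps across the new interior sub-edges of $\tT$ vanish and the sum collapses to $E\in\e$, and (ii) that $h_T\leq h_P$ so $h_{\tT}^{-1}$ may be replaced by $h_\T^{-1}$. Your argument reproduces that cited estimate from first principles: you reduce the local $L^2$ and $H^1$ norms of $(1-E_\pw)v_1$ on each $T\in\tT$ to nodal discrepancies via norm equivalence on $\p_1(T)$, telescope the nodal differences around each vertex $z$ through the shape-regular fan $\tT(z)$, observe that increments across interior sub-edges vanish while increments across $E\in\e$ are nodal jumps (with the boundary convention $[v_1]_E=v_1|_E$ handled correctly at boundary vertices since $E_\pw v_1(z)=0$ there), and then pass from nodal jumps to $L^2$ edge jumps by a one-dimensional scaling estimate. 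You also make explicit the small but important point that polygon centres $c_P$ contribute nothing. What the paper's route buys is brevity; what your route buys is self-containment and a transparent display of where the $\rho$-dependence of $C_\mathrm{En}$ enters (valence bound for $\tT(z)$, chain length, $h_T\approx h_P\approx|E|$, and the triangle/edge norm-equivalence constants). Both hinge on the same structural insight — that $\p_1(\T)$ sees no jump across the internal edges created by the sub-triangulation.
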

\begin{proof}
	There exists a positive constant $C_{En}$ independent of $h$ and $v_1$  \cite[p.~2378]{dG} such that 
	\begin{align*}
	\|h_{\tT}^{-1}(1-E_\pw) v_1\|_{L^2(\Omega)}+\left(\sum_{T\in\tT}\|\nabla(1-E_\pw) v_1\|_{L^2(T)}^2\right)^{1/2}\leq C_\mathrm{En}\left(\sum_{E\in\widehat{\e}}|E|^{-1}\|[v_1]_E\|_{L^2(E)}^2\right)^{1/2}.
	\end{align*}
	Note that any edge $E\in \e$ is unrefined in the sub-triangulation $\tT$.  Since $v_{1|P}\in H^1(P)$ is continuous in each polygonal domain $P\in\T$ and $h_T\leq h_P$ for all $T\in \tT(P)$, the above inequality reduces to (\ref{P2}). This concludes  the proof.
\end{proof}
 Recall the $L^2$ projection $\Pi_1$ onto the piecewise affine functions  $\p_1(\T)$ from Section~2.  % computable error estimator, construct a new conforming companion operator $E_\pw\circ\Pi_1$ connecting the nonconforming VEM space $V_h$ to $H^1_0(\Omega)$  via piecewise polynomials space $\p_1(\tT)$ as sketched below 
An enrichment operator $E_\pw\circ\Pi_1:V_h\to H^1_0(\Omega)$ acts as displayed
\tikzstyle{line} = [draw, -latex']
\begin{figure}[H]
	\begin{center}
		\begin{tikzpicture}[node distance = 1.5cm, auto]
		\node(step1){$V_h$};
		%	\node[right of = step1,node distance=4cm](step2){$\text{CR}^1_0(\tT)$};
		\node[right of =step1,node distance=4cm](step2){$\p_1(\T)\hookrightarrow\p_1(\tT)$};
		\node[right of = step2,node distance=4cm](step3){$H^1_0(\Omega)$};
		%\path[line](step1)--node[above,text centered]{$I_{\text{CR}}$}(step2);
		%	\path[line](step2)--node[right,text centered]{$E_h$}(step3);
		\path[line](step2)--node[above,text centered]{$E_\pw$}(step3);
		\path[line](step1)--node[above,text centered]{$\Pi_1$}(step2);
		\end{tikzpicture}
	\end{center}
\end{figure}
\vspace{-1cm}
 
\subsection{Proof of Theorem~\ref{5.2}}
 \subsubsection{Reliable $H^1$ error control}
 %\begin{proof}[Proof of Theorem~\ref{5.2}]
%	Step $1$ (upper bound for $H^1$ error).
 Define $E_1u_h:=E_\pw\Pi_1u_h\in H^1_0(\Omega)$ so that $u-E_1u_h \in H^1_0(\Omega)$. The inf-sup condition (\ref{9.1}) leads to some $v\in H^1_0(\Omega)$ with $\|v\|_{1,\Omega}\leq 1$ and
	\begin{align}
	\beta_0\|u-E_1u_h\|_{1,\Omega}= B(u-E_1u_h,v)=((f,v)_{L^2(\Omega)}-B_{\pw}(\Pi_1u_h,v))+B_{\pw}(\Pi_1u_h-E_1u_h,v)\label{4.4}
	\end{align}
	with $B(u,v)=(f,v)$ from \eqref{9} and the piecewise version $B_\pw$   of $B$ in the last step.  The definition of $B_h$ from Subsection~3.1 and the discrete problem (\ref{19})  with $v_h= I_hv$ imply
	\begin{align}
	B_{\pw}(\Pi_1u_h,\Pi_1I_hv)+s_h((1-\Pi_1)u_h,(1-\Pi_1)I_hv)=B_h(u_h,I_hv)=(f,\Pi_1I_hv)_{L^2(\Omega)}.\label{rem}
	\end{align}
Abbreviate $w:=v-\Pi_1I_hv$ and $\bm{\sigma}_h:=\bk\nabla_{\pw}\Pi_1u_h+\bb\Pi_1u_h$.  This and \eqref{rem} simplify %the first term on the right-hand side of (\ref{4.4}) 
\begin{align} 
&(f,v)_{L^2(\Omega)}-B_{\pw}(\Pi_1u_h,v)= (f,w)_{L^2(\Omega)}-B_{\pw}(\Pi_1u_h,w)+s_h((1-\Pi_1)u_h,(1-\Pi_1)I_hv)\nonumber\\&=(f-\gamma\Pi_1u_h,w)_{L^2(\Omega)}-((1-\Pi_0)\bm{\sigma}_h,\nabla_{\pw}w)_{L^2(\Omega)}+s_h((1-\Pi_1)u_h,(1-\Pi_1)I_hv)\label{5.8}
\end{align}	
with  $\int_P\nabla w\,dx=0$ for any $P\in\T$  from  (\ref{i1}) in the last step. Recall the notation  $\eta_P, \Lambda_P$, and $\zeta_P$ from Subsection~5.1. The Cauchy-Schwarz inequality and Theorem~\ref{thm2.7}.b followed by $\|(1-\Pi_0)\nabla v\|_{L^2(\Omega)}\leq |v|_{1,\Omega}\leq 1$ in the second step show
\begin{align}
(f-\gamma\Pi_1u_h,w)_{L^2(P)}&\leq \eta_Ph_P^{-1}\|w\|_{L^2(P)}\leq (1+C_\PF)\eta_P,\label{5.10a}\\
((1-\Pi_0)\bm{\sigma}_h,\nabla w)_{L^2(P)}&\leq\Lambda_P|w|_{1,P} \leq (1+C_\PF)\Lambda_P.\label{5.10b}
\end{align}
The upper bound $\|\bk\|_\infty$ of the coefficient $\bk$, (\ref{13}), and the Cauchy-Schwarz inequality for the stabilization term lead to the first inequality in
\begin{align}
C_s^{-1/2}S^P((1-\Pi_1)u_h,(1-\Pi_1)I_hv)&\leq  \|\bk\|_\infty^{1/2}S^P((1-\Pi_1)u_h,(1-\Pi_1)u_h)^{1/2} |(1-\Pi_1)I_hv|_{1,P} \nonumber\\&\leq \|\bk\|_\infty^{1/2}(2+C_{\PF}+C_{\text{Itn}})\zeta_P.\label{5.14}
\end{align}
The second inequality in (\ref{5.14}) follows as in (\ref{stab}) and with  $\|(1-\Pi_0)\nabla v\|_{L^2(P)}\leq 1$. Recall  the  boundedness constant $M_b$ of $B_\pw$ from Subsection~4.1 and deduce from (\ref{P2}) and the  definition of $\Xi_\T$ from Subsection~5.1 that \begin{align} B_{\pw}(\Pi_1u_h-E_1u_h,v)\leq M_b|\Pi_1u_h-E_1u_h|_{1,\pw}\leq M_bC_\mathrm{En}\Xi_\T. \label{5.10c}\end{align}   The substitution  of \eqref{5.8}-\eqref{5.10c} in  \eqref{4.4}  reveals that
\begin{align}
\|u-E_1u_h\|_{1,\Omega}%&\leq\sum_{P\in\T}\Big((1+C_\PF)(\eta_P+\zeta_P)\|(1-\Pi_0)\nabla v\|_{L^2(P)}+C_s^{1/2}\|\bk\|_\infty^{1/2}(2+C_{\PF}+C_{\text{Itn}})\zeta_P\Big)\nonumber\\&+M_b|\Pi_1u_h-E_1u_h|_{1,\pw}\\
&\leq C_7(\eta_\T+\Lambda_\T+\zeta_\T+\Xi_\T)\label{5.12}
\end{align}
with $\beta_0C_7=1+C_{\PF}+C_s^{1/2}\|\bk\|_\infty^{1/2}(2+C_{\PF}+C_{\text{Itn}})+M_bC_\mathrm{En}.$
  
The combination of (\ref{5.11}), \eqref{5.12} and (\ref{P2}) leads in  the triangle inequality
 \begin{align*}
|u-u_h|_{1,\pw}\leq|u-E_1u_h|_{1,\Omega}+|E_1u_h-\Pi_1u_h|_{1,\pw}+|\Pi_1u_h-u_h|_{1,\pw}
\end{align*}  to (\ref{5.22}) with   $C_{\text{rel}1}/2=C_7+C_\mathrm{En}+a_0^{-1/2}C_s^{1/2}$. \qed
%Step $2$ (upper bound for $L^2$ norm).
\subsubsection{Reliable $L^2$ error control}
 Recall $I_{\text{CR}}$ from \eqref{CR} and $J'$ from the proof of Lemma~\ref{lem3.2}, and define $E_2u_h:=J'I_{\text{CR}}u_h\in H^1_0(\Omega)$ from Subsection~5.2. Let $\Psi\in H^1_0(\Omega)\cap H^{1+\sigma}(\Omega)$ solve  the dual problem $B(v,\Psi)=(u-E_2u_h,v)$ for all $v\in V$ and recall (from \eqref{6}) the regularity estimate
\begin{align} 
\|\Psi\|_{1+\sigma,\Omega}\leq C^*_{\text{reg}}\|u-E_2u_h\|_{L^2(\Omega)}.\label{reg}
\end{align}
  The substitution  of $v:=u-E_2u_h\in V$ in the dual problem  shows
\begin{align*}
\|u-E_2u_h\|^2_{L^2(\Omega)}=B(u-E_2u_h,\Psi).
\end{align*}
 The algebra in (\ref{4.4})-(\ref{5.8}) above leads with $v=\Psi$  to the identity
\begin{align}
&\|u-E_2u_h\|^2_{L^2(\Omega)}-s_h((1-\Pi_1)u_h,(1-\Pi_1)I_h\Psi)=(f-\gamma\Pi_1u_h,\Psi-\Pi_1I_h\Psi)_{L^2(\Omega)}\nonumber\\
&\quad\quad
-((1-\Pi_0)\bs_h,\nabla_\pw(\Psi-\Pi_1I_h\Psi))_{L^2(\Omega)}%\nonumber\\&\qquad
+B_\pw(\Pi_1u_h-E_2u_h,\Psi).\label{5.15}
\end{align} 
The definition of $I_{\text{CR}}$ and $J'$ proves  the first and second equality in \[\int_{E}u_h\,ds=\int_{E}I_{\text{CR}}u_h\,ds=\int_{E}E_2u_h\,ds\quad\text{for all}\;E\in\e.\] This and an integration by parts imply $\int_P\nabla(u_h-E_2u_h)\,dx=0$ for all $P\in\T$. Hence Definition~\ref{def1} of  Ritz projection $\pid_1=\Pi_1$ in $V_h$ shows $\int_{P}\nabla(\Pi_1u_h-E_2u_h)\,ds=0$ for all $P\in\T$. This $L^2$ orthogonality $\nabla_\pw(\Pi_1u_h-E_2u_h)\perp \p_0(\T;\mathbb{R}^2)$ and the definition of $B_\pw$ in the last term of \eqref{5.15} result with elementary algebra  in
\begin{align}
&B_\pw(\Pi_1u_h-E_2u_h,\Psi)=((\bk-\Pi_0\bk)\nabla_\pw(\Pi_1u_h-E_2u_h),\nabla\Psi)_{L^2(\Omega)}\nonumber\\
&\;+(\nabla_\pw(\Pi_1u_h-E_2u_h),(\Pi_0\bk)(1-\Pi_0)\nabla\Psi)_{L^2(\Omega)}+(\Pi_1u_h-E_2u_h,\bb\cdot\nabla\Psi+\gamma\Psi)_{L^2(\Omega)}.\label{5.15a}
\end{align}
 The triangle inequality and \ref{l33} from  the proof of Lemma~\ref{lem3.2} imply the first inequality in  
\begin{align}
|\Pi_1u_h-E_2u_h|_{1,\pw}&\leq |\Pi_1u_h-I_{\text{CR}}u_h|_{1,\pw}+C_\mathrm{J'}\min_{v\in V}|I_{\text{CR}}u_h-v|_{1,\pw}\nonumber\\&\leq |\Pi_1u_h-I_{\text{CR}}u_h|_{1,\pw}+C_\mathrm{J'}|I_{\text{CR}}u_h-E_1u_h|_{1,\pw}\nonumber\\&\leq|\Pi_1u_h-I_{\text{CR}}u_h|_{1,\pw}+C_\mathrm{J'}(|I_{\text{CR}}u_h-\Pi_1u_h|_{1,\pw}+|\Pi_1u_h-E_1u_h|_{1,\pw})\nonumber\\&\leq (1+C_\mathrm{J'})|u_h-\Pi_1u_h|_{1,\pw}+C_\mathrm{J'}|\Pi_1u_h-E_1u_h|_{1,\pw}.\label{5.15b}
\end{align}
 The second estimate in \eqref{5.15b} follows from $E_1u_h\in V$,  the third is a triangle inequality, and eventually $|\Pi_1u_h-I_{\text{CR}}u_h|_{1,\pw}\leq|u_h-\Pi_1u_h|_{1,\pw}$ results from the orthogonality $\nabla_\pw(u_h-I_{\text{CR}})\perp \p_0(\tT;\mathbb{R}^2)$ and $\Pi_1u_h\in\p_1(\T)$. The Cauchy-Schwarz inequality, the Lipschitz continuity of $\bk$, and the approximation estimate  $\|(1-\Pi_0)\nabla\Psi\|_{L^2(P)}\leq C_{\text{apx}}h_P^\sigma|\Psi|_{1+\sigma,P}$    in \eqref{5.15a} lead to the first inequality   in
\begin{align}
B_\pw(\Pi_1u_h-E_2u_h,\Psi)&\leq\sum_{P\in\T}\Big((h_P|\bk|_{1,\infty}+\|\bk\|_\infty C_{\text{apx}}h_P^\sigma)|\Pi_1u_h-E_2u_h|_{1,P}\nonumber\\&\qquad+\|\Pi_1u_h-E_2u_h\|_{L^2(P)}(\|\bb\|_\infty+\|\gamma\|_\infty)\Big)\|\Psi\|_{1+\sigma,P}\nonumber\\%&\leq C\sum_{P\in\T}h_P^\sigma\|\Psi\|_{1+\sigma,P}|\Pi_1u_h-E_2u_h|_{1,P}\nonumber\\
&\hspace{-3cm}\leq\sum_{P\in\T}\Big(h_P|\bk|_{1,\infty}+\|\bk\|_\infty C_{\text{apx}}h_P^\sigma+C_\PF(\|\bb\|_{\infty}+\|\gamma\|_{\infty})h_P\Big)|\Pi_1u_h-E_2u_h|_{1,P}\|\Psi\|_{1+\sigma,P}\nonumber\\%]\leq (|\bk|_{1,\infty}+C_{\text{apx}}\|\bk\|_{\infty}+C_\PF(\|\bb\|_{\infty}+\|\gamma\|_{\infty}))\sum_{P\in\T}h_P^\sigma((1+C_{J'})|u_h-\Pi_1u_h|_{1,P}\nonumber\\&\hspace{7cm}+C_{J'}|\Pi_1u_h-E_1u_h|_{1,P}\nonumber\\&
&\hspace{-3cm}\leq C_8\sum_{P\in \T}h_P^\sigma ((1+C_\mathrm{J'})|u_h-\Pi_1u_h|_{1,P}+C_\mathrm{J'}|\Pi_1u_h-E_1u_h|_{1,P})\|\Psi\|_{1+\sigma,P}.\label{5.18}
\end{align}
The second inequality in (\ref{5.18}) follows from the Poincar\'e-Friedrichs inequality in Lemma~\ref{2.4b}.a for $\Pi_1u_h-E_2u_h$ with $\int_{\partial P}(\Pi_1u_h-E_2u_h)\,ds=0$ (from above); the  constant $C_8 := |\bk|_{1,\infty}+C_{\text{apx}}\|\bk\|_{\infty}+C_\PF(\|\bb\|_{\infty}+\|\gamma\|_{\infty})$ results  from \eqref{5.15b} and $h_P\leq h_P^{\sigma}$ (recall $h_\text{max}\leq 1$). Lemma~\ref{lem5.3} with $v_1=\Pi_1u_h$  and \eqref{5.11}  in (\ref{5.18}) show
\begin{align}
B_\pw(\Pi_1u_h-E_2u_h,\Psi)&\leq C_8\sum_{P\in\T}h_P^\sigma((1+C_\mathrm{J'})a_0^{-1/2}C_s^{1/2}\zeta_P+C_\mathrm{J'}C_\mathrm{En}\Xi_P)\|\Psi\|_{1+\sigma,P}.\label{5.19a}
\end{align}
Rewrite (\ref{5.10a})-(\ref{5.14}) with $w=\Psi-\Pi_1I_h\Psi$ and  $h_P^{-1}\|w\|_{L^2(P)}+|w|_{1,P}\leq(1+C_\PF)\|(1-\Pi_0)\nabla\Psi\|_{L^2(P)}\leq C_{\text{apx}}(1+C_\PF)h_P^\sigma|\Psi|_{1+\sigma,P}$ from (\ref{25.1}). This and (\ref{5.19a}) lead in (\ref{5.15}) to 
\begin{align*}
\|u-E_2u_h\|_{L^2(\Omega)}^2 \leq C_9\sum_{P\in\T}h_P^\sigma(\eta_P+\zeta_P+\Lambda_P+\Xi_P)\|\Psi\|_{1+\sigma,P} %\sum_{P\in\T}\left((1+C_\PF)C_{\text{apx}}(\eta_P+\Lambda_P)+(|\bk|_{1,\infty}+C_{\text{apx}}\|\bk\|_\infty+C_\PF(\|\bb\|_\infty+\|\gamma\|_\infty))(4C_s^{1/2}a_0^{-1/2}\zeta_P+C_{En}^{1/2}\Xi_P)\right)h_P^\sigma C*_{\text{reg}}
\end{align*} 
for $C_9:=C_{\text{apx}}(1+C_\PF+C_s^{1/2}\|\bk\|_\infty^{1/2}(2+C_{\PF}+C_{\text{Itn}}))+C_8((1+C_\mathrm{J'})a_0^{-1/2}C_s^{1/2}+C_\mathrm{J'}C_\mathrm{En}).$  This and the regularity \eqref{reg} result in
  \begin{align}
  \|u-E_2u_h\|_{L^2(\Omega)}\leq C_9C^*_{\text{reg}}\sum_{P\in\T}h_P^\sigma(\eta_P+\zeta_P+\Lambda_P+\Xi_P).\label{5.20a}
  \end{align}
 The arguments in the proof of (\ref{5.18})-(\ref{5.19a}) also lead to
\begin{align}
\|E_2u_h-\Pi_1u_h\|_{L^2(\Omega)}\leq C_\PF((1+C_\mathrm{J'})a_0^{-1/2}C_s^{1/2}+C_\mathrm{J'}C_\mathrm{En})\sum_{P\in\T}h_P(\zeta_P+\Xi_P).\label{5.20}
\end{align}
 The combination of \eqref{5.19}, \eqref{5.20a}-(\ref{5.20})  and the triangle inequality 
 \[ \|u-u_h\|_{L^2(\Omega)} \leq \|u-E_2u_h\|_{L^2(\Omega)}+\|E_2u_h-\Pi_1u_h\|_{L^2(\Omega)}+\|\Pi_1u_h-u_h\|_{L^2(\Omega)}\]
 lead to  (\ref{5.22a}) with 
$ C_{rel2}/2=C_9C^*_{\text{reg}}+C_\PF\big((2+C_\mathrm{J'})a_0^{-1/2}C_s^{1/2}+C_\mathrm{J'}C_\mathrm{En}\big).$ This concludes the proof of the $L^2$ error estimate in Theorem~\ref{5.2}. \qed
%\end{proof}
\subsubsection{Comments}
\paragraph{Estimator for  $u-\Pi_1u_h$}
	  The triangle inequality with  (\ref{5.22}) and (\ref{5.11}) provide an  upper bound for $H^1$ error 
	\begin{align*}
	\frac{1}{2}|u-\Pi_1u_h|_{1,\pw}^2 \leq |u-u_h|_{1,\pw}^2+|(1-\Pi_1)u_h|_{1,\pw}^2 \leq 2C_{\text{rel}1}^2(\eta_\T^2+\zeta_\T^2+\Lambda_\T^2+\Xi_\T^2).
	\end{align*}
The same arguments for an upper bound of the $L^2$ error in Theorem~\ref{5.2} show that %(\ref{5.22a}) and (\ref{5.19}) derive the upper bound for $L^2$ error 
	\begin{align*}
	\frac{1}{2}\|u-\Pi_1u_h\|_{L^2(\Omega)}^2 &\leq \|u-u_h\|_{L^2(\Omega)}^2+\|(1-\Pi_1)u_h\|_{L^2(\Omega)}^2 \\&\leq C_{\text{rel}2}^2\sum_{P\in\T}h_P^{2\sigma}(\eta_P^2+2\zeta_P^2+\Lambda_P^2+\Xi_{P}^2).
	\end{align*}
	The numerical experiments do not display $C_{\text{rel}1}$ and $C_{\text{rel}2}$, and directly compare the error $H1e:=|u-\Pi_1u_h|_{1,\pw}$ in the piecewise $H^1$ norm and the error $L2e:=\|u-\Pi_1u_h\|_{L^2(\Omega)}$ in the $L^2$ norm with the upper bound $H1\mu$ and $L2\mu$ (see, e.g., Figure~\ref{fig5.2}).
\paragraph{Motivation and discussion of apx}
We first argue that those extra terms have to be expected and utilize the abbreviations $\bs := \bk\nabla u+\bb u$ and $g := f-\gamma u$ for the exact solution $u\in H^1_0(\Omega)$ to \eqref{9}, which reads
\begin{align}
(\bs,\nabla v)_{L^2(\Omega)} = (g,v)_{L^2(\Omega)}\quad\text{for all}\; v\in H^1_0(\Omega).\label{a1}
\end{align}
Recall the definition of $s_h(\cdot,\cdot)$ from Subsection~3.1. The discrete problem \eqref{19} with the discrete solution $u_h\in V_h$ assumes the form 
\begin{align}
(\bs_h,\nabla\Pi_1 v_h)_{L^2(\Omega)} + s_h((1-\Pi_1)u_h,(1-\Pi_1)v_h)= (g_h,\Pi_1v_h)_{L^2(\Omega)}\quad\text{for all}\; v_h\in V_h\label{a2}
\end{align}
for $\bs_h:=\bk\nabla\Pi_1u_h+\bb\Pi_1u_h$, and $g_h:=f-\gamma \Pi_1u_h$. Notice that $\bs_h$ and $g_h $ may be replaced in \eqref{a2}  by $\Pi_0\bs_h$ and $\Pi_1g_h$ because the test functions $\nabla\Pi_1v_h$ and $\Pi_1v_h$ belong to $\p_0(\T;\R^2)$ and $\p_1(\T)$ respectively. In other words, the discrete problems \eqref{19} and \eqref{a2} do not see a difference of $\bs_h$ and $g_h$ compared to $\Pi_0\bs_h$ and $\Pi_1g_h$ and so the errors $\bs_h-\Pi_0\bs_h$ and $g_h-\Pi_1g_h$ may arise in \textit{a posteriori} error control. This motivates the \textit{a posteriori} error term $\|\bs_h-\Pi_0\bs_h\|_{L^2(\Omega)}=\Lambda_\T$ as well as the approximation terms $\bs-\Pi_0\bs$ and $g-\Pi_1g$ on the continuous level. The natural norm for the dual variable $\bs$ is $L^2$ and that of $g$ is $H^{-1}$ and hence their norms form the approximation term apx as defined in Subsection~5.1. 
\begin{example}[$\bb=0$]
	The term $(1-\Pi_0)\bs$ may not be visible in case of no advection $\bb=0$ at least if $\bk$ is piecewise constant. Suppose $\bk\in\p_0(\T;\R^{2\times 2})$ and estimate
	\[\|(1-\Pi_0)(\bk\nabla u)\|_{L^2(\Omega)}\leq \|\bk\|_\infty\|(1-\Pi_0)\nabla u\|_{L^2(\Omega)}\lesssim |u-\Pi_1u_h|_{1,\pw}. \] 
	If $\bk$ is not constant, there are oscillation terms that can be treated properly in adaptive mesh-refining algorithms, e.g., in \cite{cascon2008quasi}.
\end{example}
\begin{example}[$\gamma$ piecewise constant]
	While the data approximation term $\mathrm{osc}_1(f,\T)$ \cite{binev2004adaptive} is widely accepted as a part of the total error in the approximation of nonlinear problems, the term $\mathrm{osc}_1(\gamma u,\T)=\|\gamma h_\T(u-\Pi_1 u)\|_{L^2(\Omega)}\lesssim h_{\text{max}}^{1+\sigma}\|f\|_{L^2(\Omega)}$ is of higher order and may even be absorbed in the overall error analysis for a piecewise constant coefficient $\gamma\in\p_0(\T)$. In the general case $\gamma\in L^\infty(\Omega)\backslash \p_0(\T)$, however, $\mathrm{osc}_1(u,\T)$  leads in particular to terms with $\|\gamma-\Pi_0\gamma\|_{L^\infty(\Omega)}$.
\end{example}
\paragraph{Higher-order nonconforming VEM}
The analysis applied in Theorem~\ref{5.2} can be extended to the nonconforming VEM space of higher order $k\in\mathbb{N}$ (see \cite[Sec.~4]{5} for the definition of discrete space). Since the projection operators $\nabla\Pi_k^{\nabla}$ and $\Pi_{k-1}\nabla$ are not the same for general $k$, and the first operator does not lead to optimal order of convergence  for $k\geq 3$, the discrete formulation uses $\Pi_{k-1}\nabla$ (cf. \cite[Rem.~4.3]{4} for more details). The definition and approximation properties of the averaging operator $E_\pw$ extend to the operator $E^k:\p_k(\tT)\to H^1_0(\Omega)$ (see \cite[p.~2378]{dG} for a proof). The identity (\ref{rem}) does not hold in general, but  algebraic calculations lead to  
\begin{align*}
\eta_P^2&:=h_P^2\|f-\gamma\Pi_ku_h\|_{L^2(P)}^2,\qquad\Lambda_P^2:=\|(1-\Pi_{k-1})(\bk\Pi_{k-1}\nabla u_h+\bb\Pi_k u_h)\|_{L^2(P)}^2
\\
\zeta_P^2&:=S^P((1-\Pi_k)u_h,(1-\Pi_k)u_h),\qquad\Xi_P^2:=\sum_{E\in\e(P)}|E|^{-1}\|[\Pi_ku_h]_E\|^2_{L^2(E)}.
\end{align*}
The analysis developed for the upper bound of $L^2$ norm also extends to the general case. The model problem is chosen in 2D for the simplicity of the presentation. The results of this work can be  extended to the three-dimensional case with appropriate modifications. The present analysis holds for any higher regularity index $\sigma>0$ and avoids any trace inequality for  higher derivatives. This is possible by a medius analysis in the form  of companion operators \cite{carstensen2018prove}.
%\end{rem}
\paragraph{Inhomogeneous boundary data}\label{nonhombd}
	 The error estimator for general Dirichlet condition $u|_{\partial\Omega}=g\in H^{1/2}(\partial \Omega)$  can be obtained with some modifications of \cite{17} in Theorem~\ref{5.2}. The only difference is in the modified jump contributions of the boundary edges in the nonconformity term
	\begin{align*}
	\Xi_\T^2=\sum_{E\in\e(\Omega)}|E|^{-1}\|[\Pi_1u_h]\|_{L^2(E)}^2+\sum_{E\in\e(\partial\Omega)}|E|^{-1}\|g-\Pi_1u_h\|_{L^2(E)}^2.
	\end{align*}
	
%\end{rem}
\subsection{Proof of Theorem~\ref{efficiency}}
	   Recall the notation $\bm{\sigma} =\bk\nabla u+\bb u$ and $\bm{\sigma}_h=\bk\nabla\Pi_1u_h+\bb\Pi_1u_h$ from Subsection~5.3.
	   \begin{proof}[Proof of \eqref{lb}]
	   	The upper bound \eqref{13} for the stabilisation term and the triangle inequality show
	   	\begin{align*}
	   	\zeta_P^2\leq C_s|(1-\Pi_1)u_h|^2_{1,P}\leq 2C_s(|u-u_h|^2_{1,P}+|u-\Pi_1u_h|^2_{1,P}).
	   	\end{align*}
	   	This concludes the proof of \eqref{lb}.
	   	\end{proof}
	 \begin{proof}[Proof of (\ref{lb3})]
	 	The definition of $\Lambda_P, \Pi_0$,  and the triangle inequality  lead to
	 	\begin{align}
	 	\Lambda_P =\|\bm{\sigma}_h-\Pi_0\bs_h\|_{L^2(P)}&\leq\|\bm{\sigma}_h-\Pi_0\bm{\sigma}\|_{L^2(P)}\nonumber\\&\leq \|\bk\nabla(\Pi_1u_h-u)+\bb(\Pi_1u_h-u)\|_{L^2(P)}+\|(1-\Pi_0)\bm{\sigma}\|_{L^2(P)}.\label{5.23}
	 	\end{align}
	 	 The upper bound $\|\bk\|_\infty$ and $\|\bb\|_\infty$ for the coefficients and the triangle inequality lead   to
	 	\begin{align}
	 	&\Lambda_P-\|(1-\Pi_0)\bm{\sigma}\|_{L^2(P)}\leq (\|\bk\|_\infty+\|\bb\|_\infty)\|\Pi_1u_h-u\|_{1,P}\nonumber\\&\qquad\leq(\|\bk\|_\infty+\|\bb\|_\infty)(\|u_h-\Pi_1u_h\|_{1,P}+\|u-u_h\|_{1,P})\leq C_{10}(\zeta_P+\|u-u_h\|_{1,P})\label{5.25a}
	 	\end{align}
	 	with $\|u_h-\Pi_1u_h\|_{1,P}\leq (1+h_PC_\PF)a_0^{-1/2}C_s^{1/2}\zeta_P$ from (\ref{5.11})-(\ref{5.19}) and with  $C_{10}:=(\|\bk\|_\infty+\|\bb\|_\infty)((1+h_PC_\PF)a_0^{-1/2}C_s^{1/2}+1)$. 
	 	This followed by \eqref{lb}  concludes the proof of (\ref{lb3}).
	 	\end{proof}  
 	 Recall the bubble-function $b_{\T}|_P=b_P$ supported on a polygonal domain $P\in\T$ from (\ref{bubble}) as the sum of interior bubble-functions supported on each triangle $T\in \tT(P)$.
	     \begin{proof}[Proof of (\ref{lb1})]
	     Rewrite the term  
	\begin{align}
	 f-\gamma\Pi_1u_h = \Pi_1(f-\gamma\Pi_1u_h)+(1-\Pi_1)(f-\gamma\Pi_1u_h)=:R+\theta,\label{R}
	\end{align} 
	  and denote $R_P:=R|_P$ and $\theta_P:=\theta|_P$. The definition of $B_\pw(u-\Pi_1u_h,v)$ and the weak formulation $B(u,v)=(f,v)$  from \eqref{9} for any $v\in V$ imply 
	\begin{align}
	B_{\pw}(u-\Pi_1u_h,v)+(\bs_h,\nabla v)_{L^2(\Omega)}&=(f-\gamma\Pi_1u_h,v)_{L^2(\Omega)}=(R+\theta,v)_{L^2(\Omega)}.\label{5.16}
	\end{align}
	  Since $b_PR_P$ belongs to $H^1_0(\Omega)$ (extended  by zero outside $P$),  $v :=b_PR_P\in V$ is admissible in \eqref{5.16}. An integration by parts proves that $(\Pi_0\bs_h,\nabla (b_PR_P))_{L^2(P)}=0$. Therefore, \eqref{5.16}  shows
	\begin{align*}
	(R_P,b_PR_P)_{L^2(P)}=B^P(u-\Pi_1u_h,b_PR_P)-(\theta_P,b_PR_P)_{L^2(P)}+((1-\Pi_0)\bs_h,\nabla (b_PR_P))_{L^2(P)}.%\label{4.27}
	\end{align*}
	The substitution of $\chi=R_P=\Pi_1(f-\gamma\Pi_1u_h)|_P\in\p_1(P)$ in (\ref{b1}) and the previous identity with the boundedness of $B$ and the Cauchy-Schwarz inequality lead to the first two estimates in
	\begin{align}
	&C_b^{-1}\|R_P\|_{L^2(P)}^2\leq(R_P,b_PR_P)_{L^2(P)}\nonumber\\&\quad\leq \Big(M_b|u-\Pi_1u_h|_{1,P}+\|(1-\Pi_0)\bs_h\|_{L^2(P)}\Big)|b_PR_P|_{1,P}+\|\theta_P\|_{L^2(P)}\|b_PR_P\|_{L^2(P)}\nonumber\\&\quad\leq C_b\Big(M_b|u-\Pi_1u_h|_{1,P}+\Lambda_P+h_P\|\theta_P\|_{L^2(P)}\Big)h_P^{-1}\|R_P\|_{L^2(P)}.\nonumber%\label{5.27}
	\end{align}
 The last inequality follows from the definition of $\Lambda_P$, and (\ref{b2}) with $\chi = R_P$. This proves that $C_b^{-2}h_P\|R_P\|_{L^2(P)}\leq M_b|u-\Pi_1u_h|_{1,P}+\Lambda_P+h_P\|\theta_P\|_{L^2(P)}.$
  Recall  $\eta_P$ from Subsection~5.1 and  $\eta_P = h_P\|f-\gamma\Pi_1u_h\|_{L^2(P)}\leq h_P\|R_P\|_{L^2(P)}+h_P\|\theta_P\|_{L^2(P)}$ from the split in \eqref{R} and the triangle inequality.    This and the previous estimate of  $h_P\|R_P\|_{L^2(P)}$ show the first estimate in
 \begin{align*}
 \eta_P&\leq C_b^2(M_b|u-\Pi_1u_h|_{1,P}+\Lambda_P)+ (C_b^2+1)h_P\|\theta_P\|_{L^2(P)}\\&\leq (C_b^2+1)\Big(M_b|u-\Pi_1u_h|_{1,P}+\Lambda_P+ h_P\|(f-\gamma\Pi_1u_h)-\Pi_1(f-\gamma u)\|_{L^2(P)}\Big)\\&\leq (C_b^2+1)\Big((M_b+h_P\|\gamma\|_\infty)\|u-\Pi_1u_h\|_{1,P}+\Lambda_P+\mathrm{osc}_1(f-\gamma u,P)\Big).
 \end{align*}
 The second step results from  the definition of $\theta_P=(1-\Pi_1)(f-\gamma\Pi_1u_h)|_P$ in \eqref{R} followed by the $L^2$ orthogonality of $\Pi_1$,  and the last step results from an elementary algebra with the triangle inequality and  $\mathrm{osc}_1(f-\gamma u,P)=h_P\|(1-\Pi_1)(f-\gamma u)\|_{L^2(P)}$ from Subsection~5.1. The triangle inequality for the term $u-\Pi_1u_h$ and the estimate of $\|u_h-\Pi_1u_h\|_{1,P}$ as in \eqref{5.25a}  lead to 
\begin{align*}
C_{11}^{-1}\eta_P\leq \|u-u_h\|_{1,P}+\zeta_P+\Lambda_P+\mathrm{osc}_1(f-\gamma u,P)
\end{align*}
with  $C_{11} := (C_b^{2}+1)(M_b+h_P\|\gamma\|_\infty)((1+h_PC_\PF)a_0^{-1/2}C_s^{1/2})+1)$.  The combination of \eqref{lb} and (\ref{lb3}) in the last displayed estimate concludes the proof of (\ref{lb1}).
\end{proof}
\begin{proof}[Proof of (\ref{lb2})]
Recall  for $u\in H^1_0(\Omega)$ and $u_h\in V_h$ that $\dashint_E u\,ds$ and $\dashint_E u_h\,ds$ are well defined for all edges $E\in\e$, and so the constant $\alpha_E:=\dashint_{E}(u-u_h)\,ds$ is uniquely defined as well.  Since the jump of $u-\alpha_E$ across any edge $E\in\e$ vanishes, $[\Pi_1u_h]_E = [\Pi_1u_h-u+\alpha_E]_E$. Recall $\omega_E=\text{int}(P^+\cup P^-)$ for $E\in\e(\Omega)$ and $\omega_E=\text{int}(P)$ for $E\in\e(\partial\Omega)$  from Subsection~5.1. The trace inequality  $\|v\|^2_{L^2(E)}\leq C_T(|E|^{-1}\|v\|^2_{L^2(\omega_E)}+|E|\;\|\nabla v\|^2_{L^2(\omega_E)})$  (cf. \cite[p.~554]{14})  leads to
\begin{align}
&|E|^{-1/2}\|[\Pi_1u_h]_E\|_{L^2(E)}\leq C_T \left(|E|^{-1}\|\Pi_1u_h-u+\alpha_E\|_{L^2(\omega_E)}+\|\nabla_\pw(\Pi_1u_h-u)\|_{L^2(\omega_E)}\right).\nonumber
\end{align}
This and the triangle inequality show the first estimate in
\begin{align}
|E|^{-1/2}\|[\Pi_1u_h]_E\|_{L^2(E)}&\leq C_T\Big( |E|^{-1}(\|u_h-\Pi_1u_h\|_{L^2(\omega_E)}+\|u_h-u+\alpha_E\|_{L^2(\omega_E)})\nonumber\\&\qquad+\|\nabla_\pw(u_h-\Pi_1u_h)\|_{L^2(\omega_E)}+\|\nabla_\pw(u-u_h)\|_{L^2(\omega_E)}\Big).\label{5.27}%\\&\hspace{-1.5cm}\leq C_T(C_\PF\rho^{-1}+1)(\|\nabla_\pw(u_h-\Pi_1u_h)\|_{L^2(\omega_E)}+\|\nabla_\pw(u-u_h)\|_{L^2(\omega_E)}). 
\end{align}
The estimates (\ref{5.11})-\eqref{5.19} control the term $\|u_h-\Pi_1u_h\|_{1,P}$ as in \eqref{5.25a}, and  the  Poincar\'e-Friedrichs inequality from Lemma~\ref{2.4b}.b for $u_h-u+\alpha_E$ with $\int_{E}(u_h-u+\alpha_E)\,ds = 0$ (by the definition of $\alpha_E$) implies that $\|u_h-u+\alpha_E\|_{L^2(P)}\leq C_\PF h_P|u_h-u|_{1,P}$. 
This with the mesh assumption $h_P\leq \rho^{-1}|E|$ and \eqref{5.27}   result in 
\begin{align*}
|E|^{-1/2}\|[\Pi_1u_h]_E\|_{L^2(E)}\leq C_T((C_\PF\rho^{-1}+1)a_0^{-1/2}C_s^{1/2}+C_\PF+1)\sum_{P'\in\omega_E}(\Lambda_{P'}+|u-u_h|_{1,P'}).
\end{align*}
Since this holds for any edge $E\in \e(P)$, the sum over all these edges and the bound \eqref{lb} in the above estimate conclude the proof of \eqref{lb2}.
\end{proof}
\begin{rem}[convergence rates of $L^2$ error control for $0<\sigma\leq 1$]
The efficiency estimates \eqref{lb1}-\eqref{lb2}  with a multiplication of $h_P^{2\sigma}$ show that the local quantity $h_P^{2\sigma}(\eta_P^2+\Lambda_P^2+\Xi_P^2)$ converges to zero with the expected convergence rate.	
\end{rem}
\begin{rem}[efficiency up to stabilisation and oscillation for $L^2$ error control when $\sigma=1$]
	For convex domains and $\sigma=1$, there is even a local efficiency result that is briefly described in the sequel: The arguments in the above proof of (\ref{lb1})-(\ref{lb3}) lead to
	\begin{align*}
	h_P^{2}\eta_P^2&\lesssim \|u-u_h\|^2_{L^2(P)}+h_P^{2}(\zeta_P^2+\mathrm{osc}_1^2(f-\gamma u,P)+\|(1-\Pi_0)\bs\|_{L^2(P)}^2),\\
	h_P^{2}\Lambda_P^2&\lesssim \|u-u_h\|^2_{L^2(P)}+h_P^{2}(\zeta_P^2+\|\bk-\Pi_0\bk\|_{L^\infty(P)}^2\|f\|^2_{L^2(\Omega)}+\|(1-\Pi_0)\bb u\|_{L^2(P)}^2).
	\end{align*}
	   The observation $[\Pi_1u_h]_E=[\Pi_1u_h-u]_E$ for the term $\Xi_P$, the trace inequality, and the triangle inequality   show, for any $E\in\e$, that
	\begin{align*}
	|E|^{1/2}\|[\Pi_1u_h]_E\|_{L^2(E)}
	 &\leq C_T\left(\|u_h-\Pi_1u_h\|_{L^2(\omega_E)}+\|u-u_h\|_{L^2(\omega_E)}\right.\nonumber\\&\quad\left.+|E|(\|\nabla\Pi_1(u-u_h)\|_{L^2(\omega_E)}+\|\nabla(u-\Pi_1u)\|_{L^2(\omega_E)})\right).
	\end{align*}
	The bound (\ref{5.19}) for the first term and the inverse estimate $\|\nabla\chi\|_{L^2(P)}\leq C_{\text{inv}}h_P^{-1}\|\chi\|_{L^2(P)}$ for $\chi\in\p_k(P)$ for the third term   result in
	\begin{align*}
	|E|^{1/2}\|[\Pi_1u_h]_E\|_{L^2(E)}\lesssim \|u-u_h\|_{L^2(\omega_E)}+|E|\sum_{P'\in\omega_E}\Big(\|\nabla(1-\Pi_1)u\|_{L^2(P')}+\Lambda_{P'}\Big).
	\end{align*}
	The mesh assumption \ref{M2} implies that $h_P^2\Xi_P^2 \leq \rho^{-1}\sum_{E\in\e(P)}|E|\;\|[\Pi_1u_h]_E\|_{L^2(E)}^2$. This and the above displayed inequality prove the efficiency estimate for $h_P^2\Xi_P^2$.
\end{rem}

\section{Numerical experiments}
This section manifests the performance of the \textit{a posteriori} error estimator and an associated adaptive mesh-refining algorithm with D$\ddot{o}$rfler marking \cite{16}. The numerical results investigate  three computational benchmarks for  the indefinite problem (\ref{1}).
\subsection{Adaptive algorithm}
\textbf{Input}:  initial partition ${\cal T}_0$ of $\Omega$. \\
For $\ell = 0,1,2,\dots$   do
\begin{enumerate}
	\item \textbf{SOLVE}. Compute the discrete solution $u_h$ to (\ref{19}) with respect to $\T_\ell$ for $\ell=0,1,2\dots$ (cf. \cite{3} for more details on the implementation).
	\item \textbf{ESTIMATE}. Compute all the four terms $\eta_\ell:=\eta_{\T_\ell}, \zeta_\ell:=\zeta_{\T_\ell}, \Lambda_\ell:=\Lambda_{\T_\ell}$ and $\Xi_\ell:=\Xi_{\T_\ell}$, which add up to the upper bound \eqref{5.22}.
	\item \textbf{MARK}.  Mark the polygons $P$ in a subset ${\cal M}_\ell \subset$ ${\cal T}_\ell$ with minimal cardinality and
	\begin{align*}
	{H1\mu}_{\ell}^2:=H1\mu^2({\cal T}_\ell):=\eta_\ell^2+\zeta_\ell^2+\Lambda_\ell^2+\Xi_\ell^2\leq 0.5\sum_{P\in{\cal M}_\ell}(\eta_P^2+\zeta_P^2+\Lambda_P^2+\Xi_P^2).
	\end{align*}
	\item \textbf{REFINE} - Refine the marked polygon domains by connecting the mid-point of the edges to the centroid of respective polygon domains and update ${\cal T}_\ell$. (cf. Figure \ref{fig6.1} for an illustration of the refinement strategy.)
\end{enumerate} 
	\begin{figure}[H]
		\centering
		\includegraphics[width=0.2\linewidth]{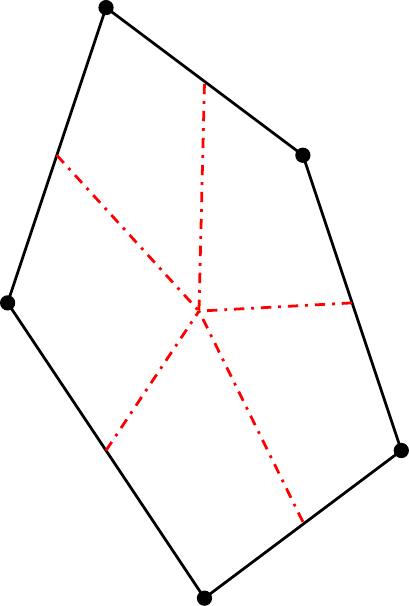}
		\caption{Refinement of a polygon into quadrilaterals}
		\label{fig6.1}
	\end{figure}%
end do\\
\textbf{Output}: The sequences $\T_\ell$, and the bounds  $\eta_\ell,
 \zeta_\ell, \Lambda_\ell, \Xi_\ell$, and $H1\mu_\ell$ for $\ell=0,1,2,\dots$.\\ \par
The adaptive algorithm is displayed for mesh adaption in the energy error $H^1$.  Replace  estimator $H1\mu_\ell$ in the algorithm by $L2\mu_\ell$ (the upper bound in \eqref{5.22a})  for local mesh-refinement in the $L^2$ error. Both uniform and adaptive mesh-refinement run to compare the empirical convergence rates and provide numerical evidence for the superiority of adaptive mesh-refinement. Note that uniform refinement means all the polygonal domains are refined.   In all examples below,  $\overline{\bk}_P=1$ in (\ref{5.1s}). The numerical realizations are based on a MATLAB implementation explained in \cite{product}  with a Gauss-like cubature formula over polygons. The cubature formula is exact for all bivariate polynomials of degree at most $2n-1$, so the choice $n\geq (k+1)/2$ leads to integrate a polynomial of degree $k$ exactly. The quadrature errors in the computation of examples presented below appear negligible for the input parameter $n=5$.
%\begin{defn}[experimental order of convergence]
	%Given the number ndof($\ell$) of degrees of freedom  and error e($\ell$) at the  level $\ell\in\mathbb{N}_0$, the empirical order of convergence using the formula
%	\begin{align*}
%	\text{order of convergence}=\frac{\log(e(l)/e(l+1))}{\log(\mathrm{ndof}(l+1)/\mathrm{ndof}(l))}.
%	\end{align*}	
%\end{defn}

\subsection{Square domain (smooth solution)}
	This subsection discusses the problem (\ref{1}) with the coefficients $\bk=I, \bb=(x,y)$ and $\gamma=x^2+y^3$ on a square domain $\Omega=(0,1)^2$, and  the exact solution 
	\begin{align*}
	u=16x(1-x)y(1-y)\arctan(25x-100y+50)
	\end{align*}
	with $f={\cal L}u$. Since $\gamma-\frac{1}{2}\dv(\bb)=x^2+y^3-1$ is not always positive on $\Omega$, this is an indefinite problem. Initially, the error and the estimators are large because of an internal layer around the line $25x-100y+50=0$ with large first derivative of $u$  resolved after few refinements as displayed in Figure \ref{fig5.1}. %The dashed line in Figure \ref{fig5.2}(b) shows the decrease in $L^2$-error with uniform refinement
	\begin{figure}[H]
		\centering
		\begin{subfigure}{.33\textwidth}
			\centering
			\includegraphics[width=0.8\linewidth]{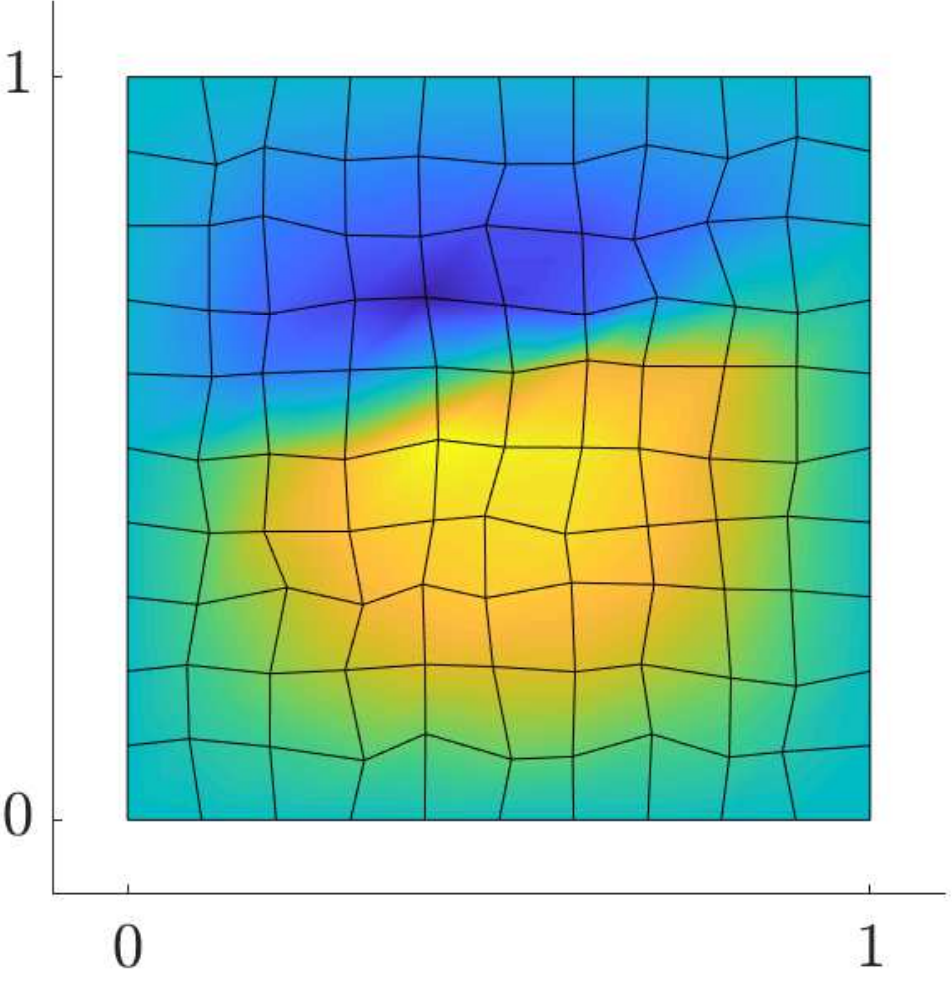}         %{bulk_P1_M1}
		\end{subfigure}%
		\begin{subfigure}{.33\textwidth}
			\centering
			\includegraphics[width=0.8\linewidth]{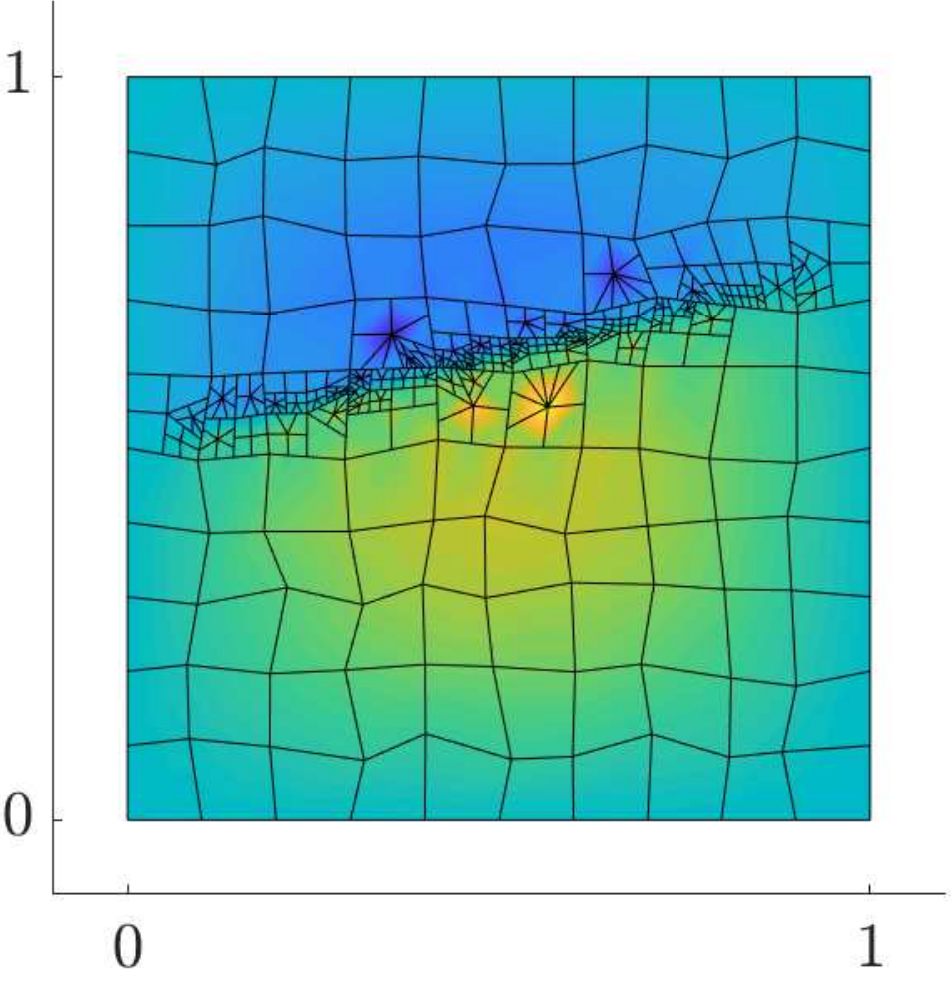}
		\end{subfigure}%
		\begin{subfigure}{.33\textwidth}
			\centering
			\includegraphics[width=0.8\linewidth]{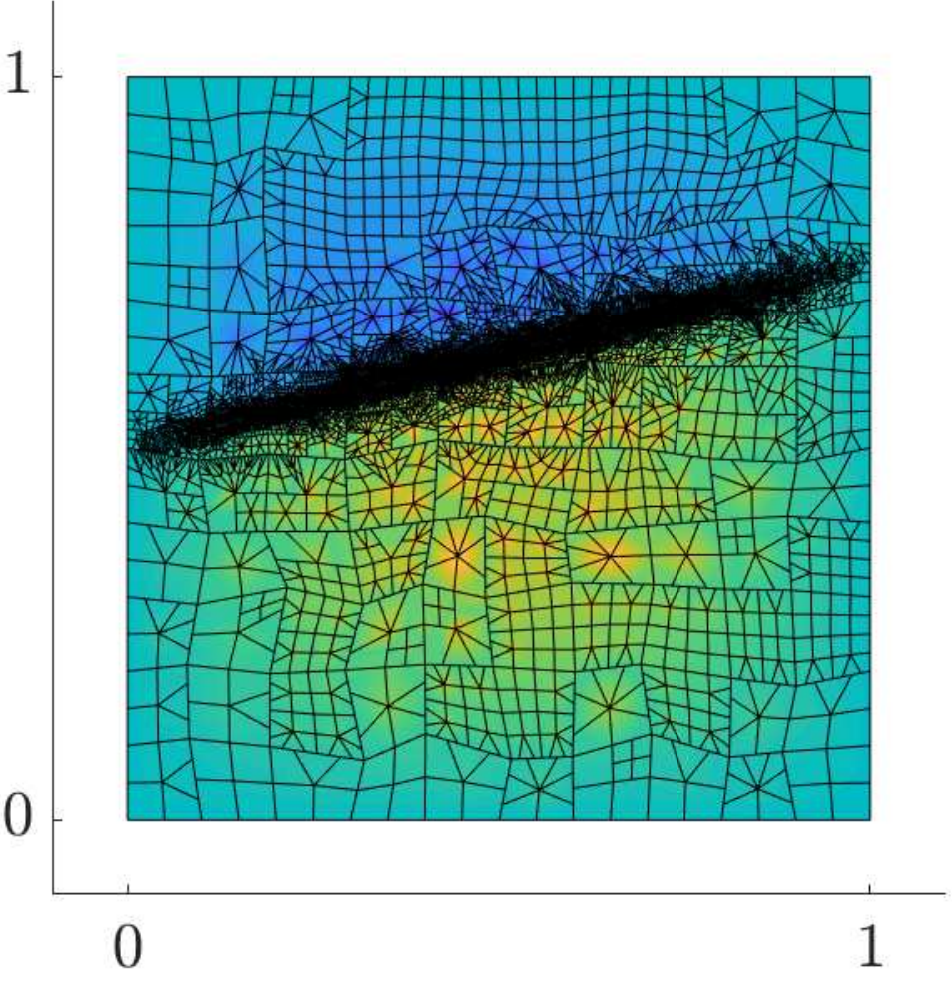}
		\end{subfigure}
		\caption{Output $\T_1, \T_8, \T_{15}$ of the adaptive algorithm}
		\label{fig5.1}
	\end{figure}
	\begin{figure}[H]
		\centering
		\begin{subfigure}{.5\textwidth}
			\centering
			\includegraphics[width=0.8\linewidth]{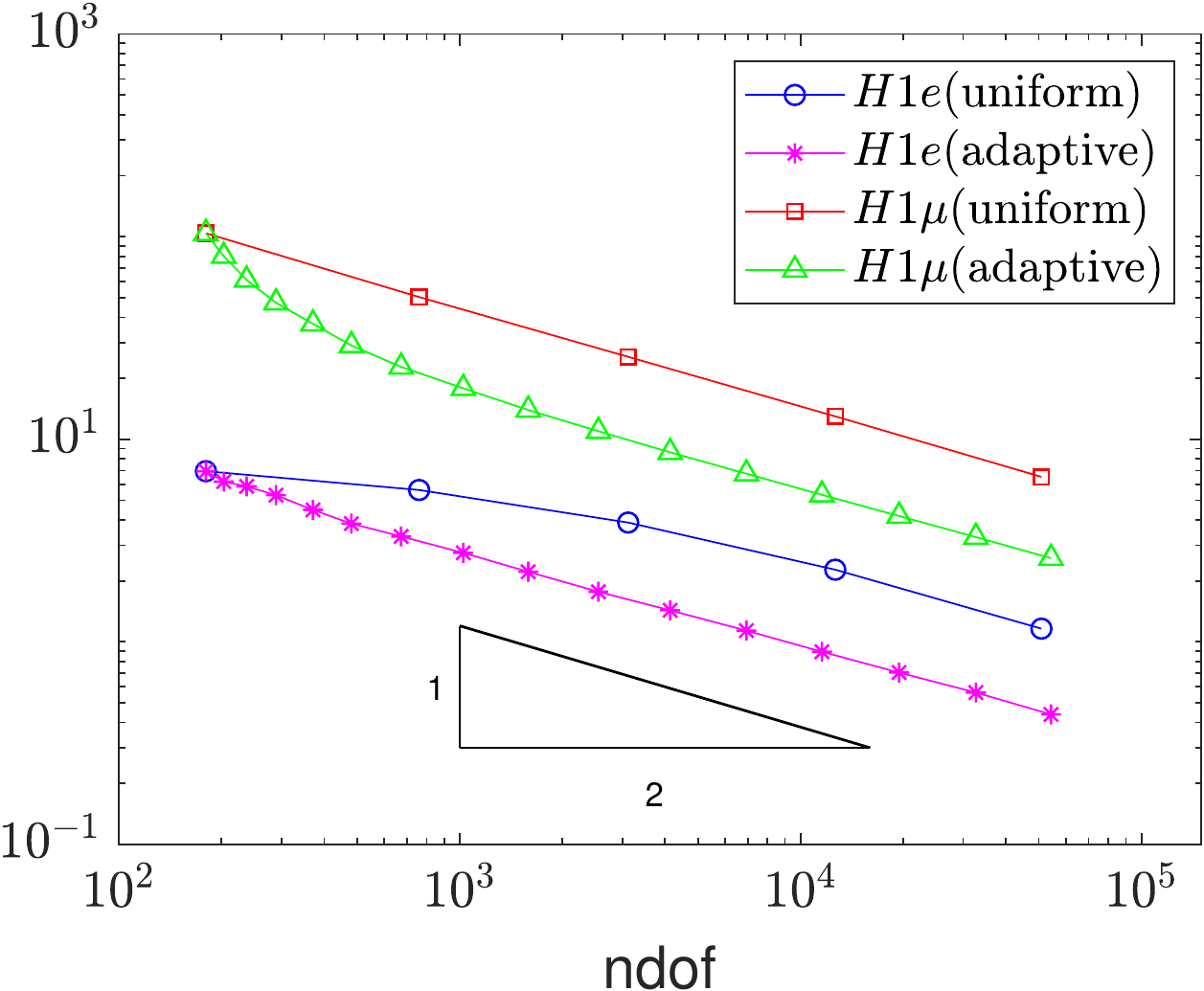}     %{P1_err_est_no_osc}
			\caption{}
			\vspace{0.5cm}
		\end{subfigure}%
		\begin{subfigure}{.5\textwidth}
			\centering
			\includegraphics[width=0.8\linewidth]{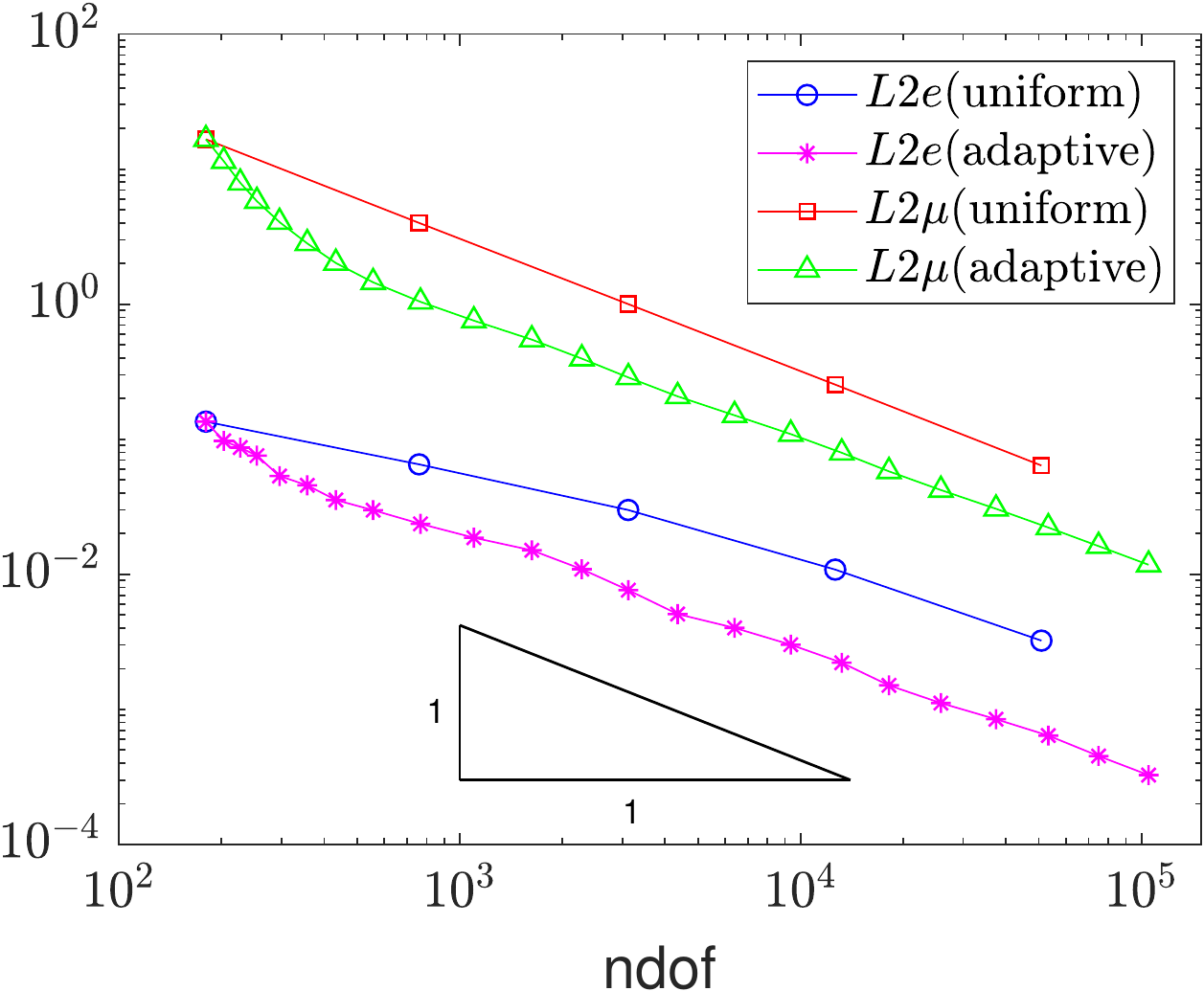}%{P1_err_est_L2}
			\caption{}
			\vspace{0.5cm}
		\end{subfigure}
		\caption{Convergence history plot of estimator $\mu$ and error $e:=u-\Pi_1u_h$  in the (a) piecewise $H^1$ norm  (b) $L^2$ norm vs number ndof of degrees of freedom  for both uniform and adaptive refinement}
		\label{fig5.2}
		\end{figure}
	
\subsection{L-shaped domain (non-smooth solution)}
 This subsection shows an advantage of using  adaptive mesh-refinement over uniform meshing for  the problem (\ref{1})  with the coefficients as $
	\bk=I, \bb=(x,y)\h\text{and}\h \gamma=-4$
	on a L-shaped domain $\Omega=(-1,1)^2\backslash [0 , 1)\times (-1 , 0]$ and the  exact solution 
	\begin{align*}
	u=r^{2/3}\sin\left(\frac{2\theta}{3}\right)
	\end{align*}
	with $f:={\cal L}u$. Since the exact solution is not zero along the  boundary $\partial\Omega$, the error estimators are modified according to Subsection~\ref{nonhombd}. Since $\gamma-\frac{1}{2}\dv(\bb)=-5<0$, the problem is non-coercive. Observe that with increase in number of iterations, refinement is more at the singularity as highlighted in Figure \ref{fig5.3}. Since the exact solution $u$ is in $H^{(5/3)-\epsilon}(\Omega)$ for all $\epsilon>0$, from \textit{a priori} error estimates the expected order of convergence in $H^1$ norm is $1/3$ and in $L^2$ norm is at least $2/3$ with respect to number of degrees of freedom for uniform refinement. Figure \ref{fig5.4} shows that uniform refinement gives the sub-optimal convergence rate, whereas adaptive refinement lead to optimal convergence rates ($1/2$ for $H^1$ norm and $5/6$ in $L^2$ norm).% The dashed line in Figure  \ref{5.4}(b) shows the decrease in $L^2$-error with uniform refinement
	\begin{figure}[H]
		\centering
		\begin{subfigure}{.33\textwidth}
			\centering
			\includegraphics[width=0.8\linewidth]{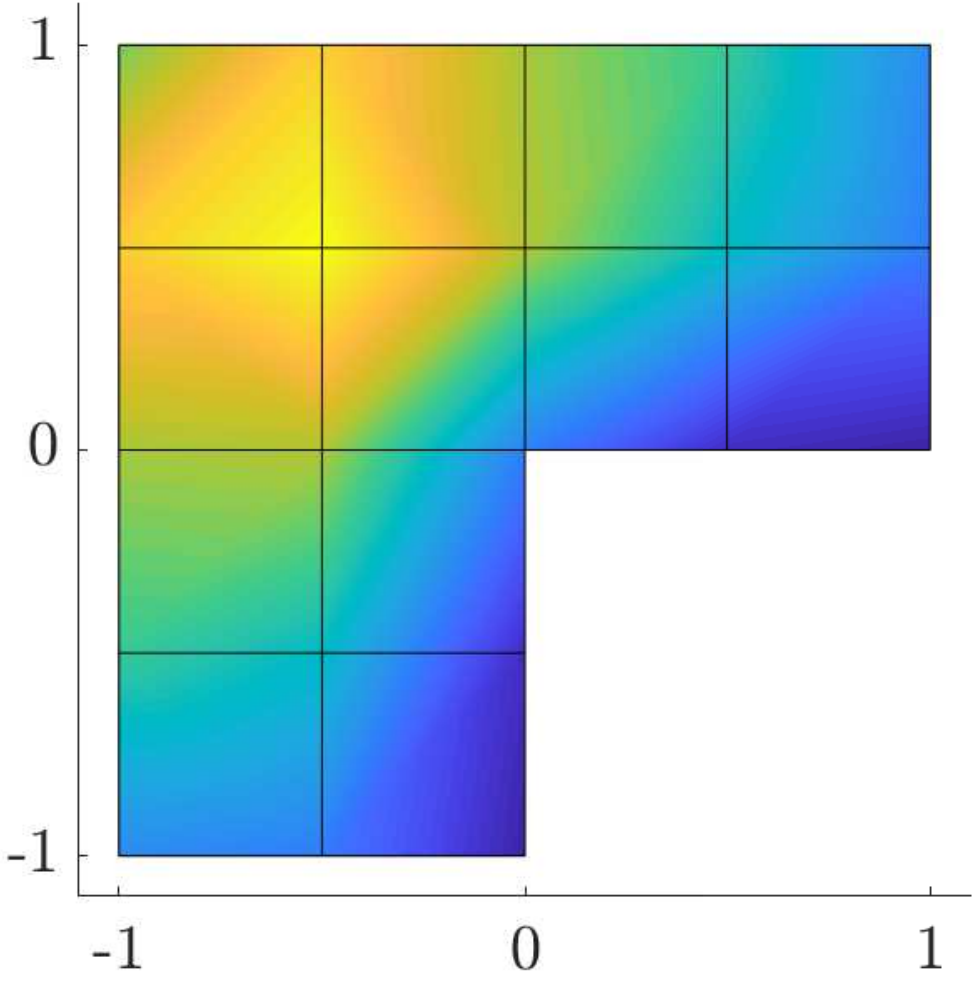}
		\end{subfigure}%
		\begin{subfigure}{.33\textwidth}
			\centering
			\includegraphics[width=0.8\linewidth]{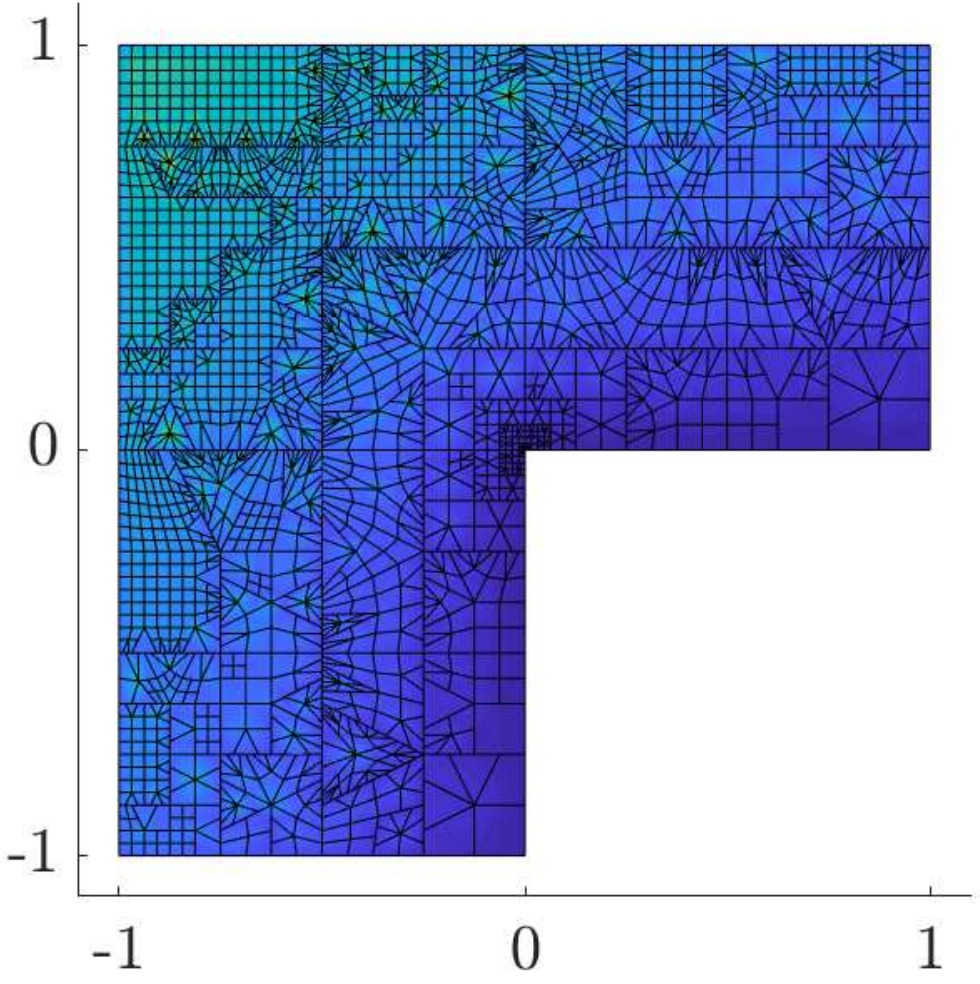}
		\end{subfigure}%
		\begin{subfigure}{.33\textwidth}
			\centering
			\includegraphics[width=0.8\linewidth]{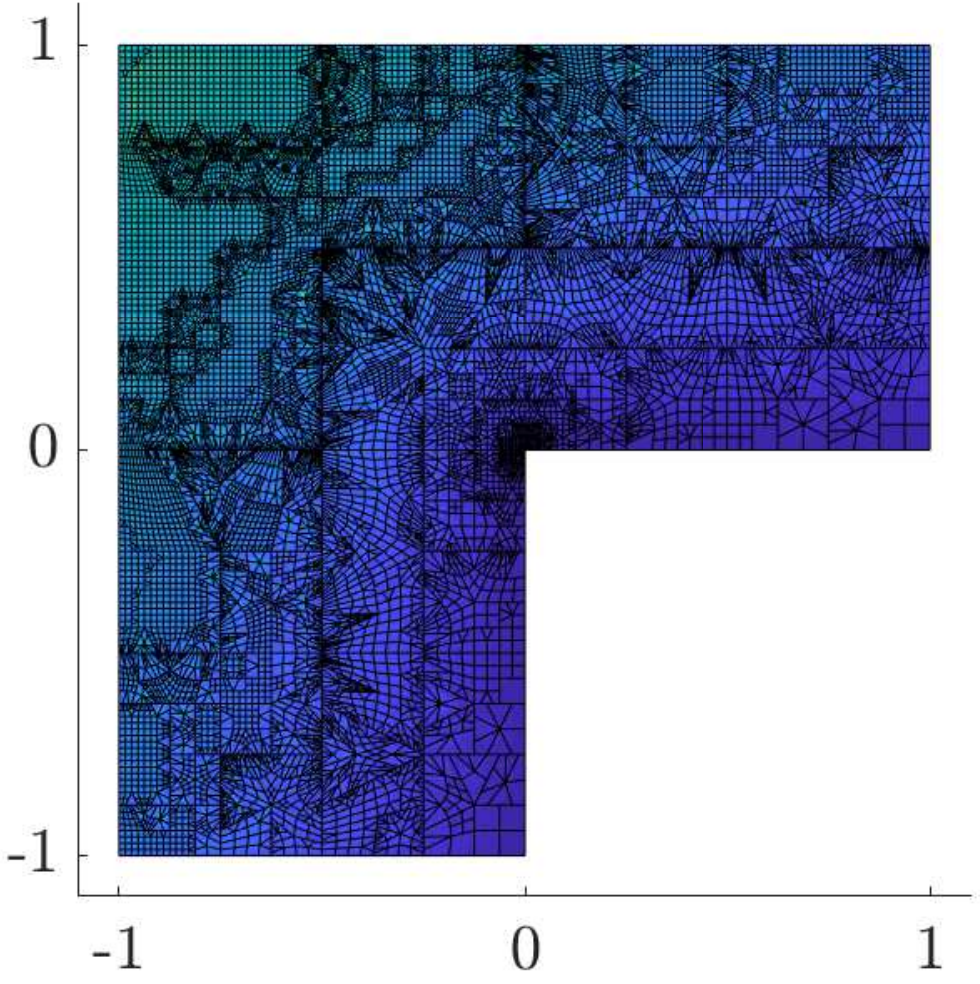}
		\end{subfigure}
		\caption{Output $\T_1, \T_{10}, \T_{15}$ of the adaptive refinement}
		\label{fig5.3}
	\end{figure}
	\begin{figure}[H]
		\centering
		\begin{subfigure}{.5\textwidth}
			\centering
			\includegraphics[width=0.8\linewidth]{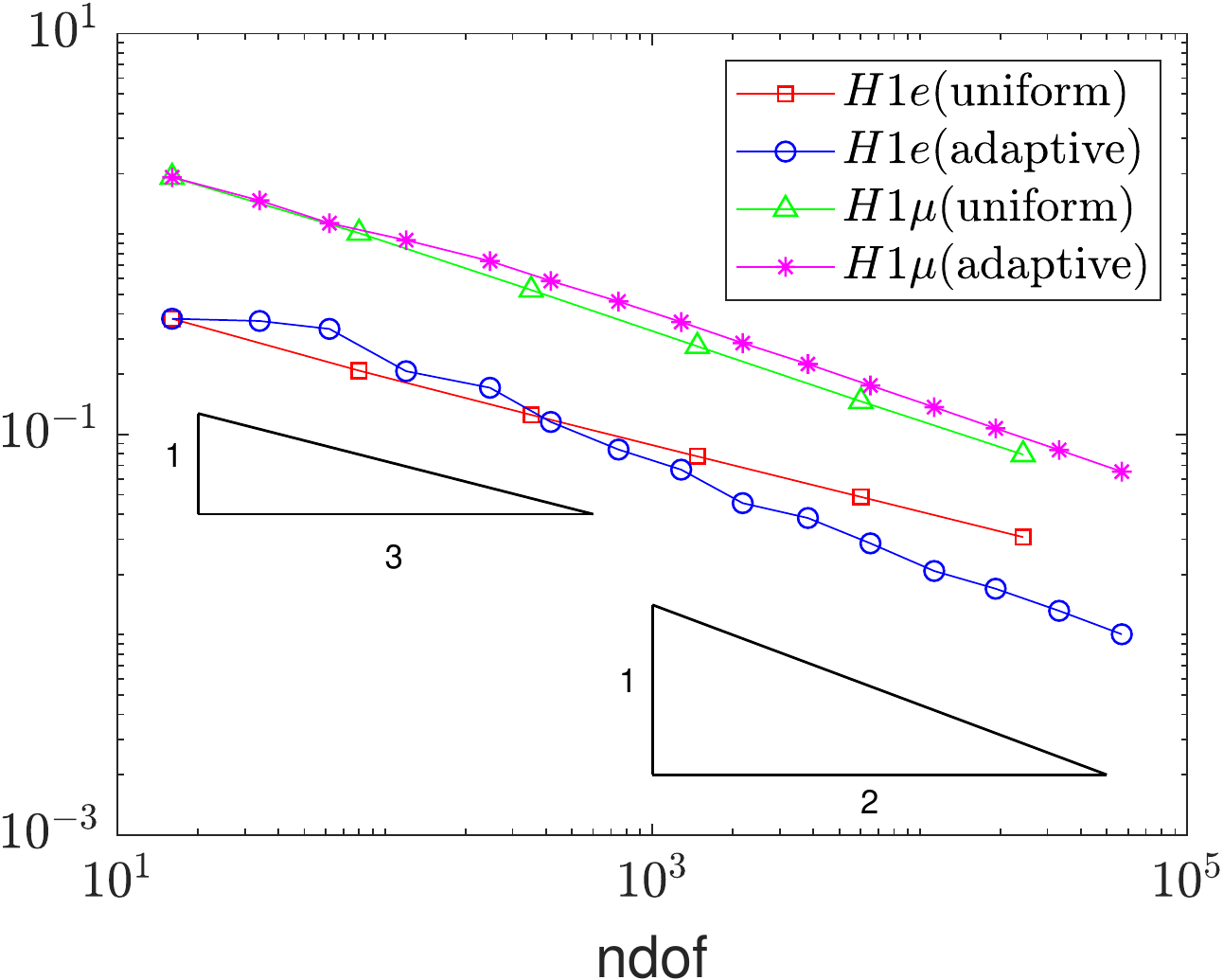}%{P2_err_est_no_osc}   %{P2_result1_final}
			\caption{}
			\vspace{0.5cm}
		\end{subfigure}%
		\begin{subfigure}{.5\textwidth}
			\centering
			\includegraphics[width=0.8\linewidth]{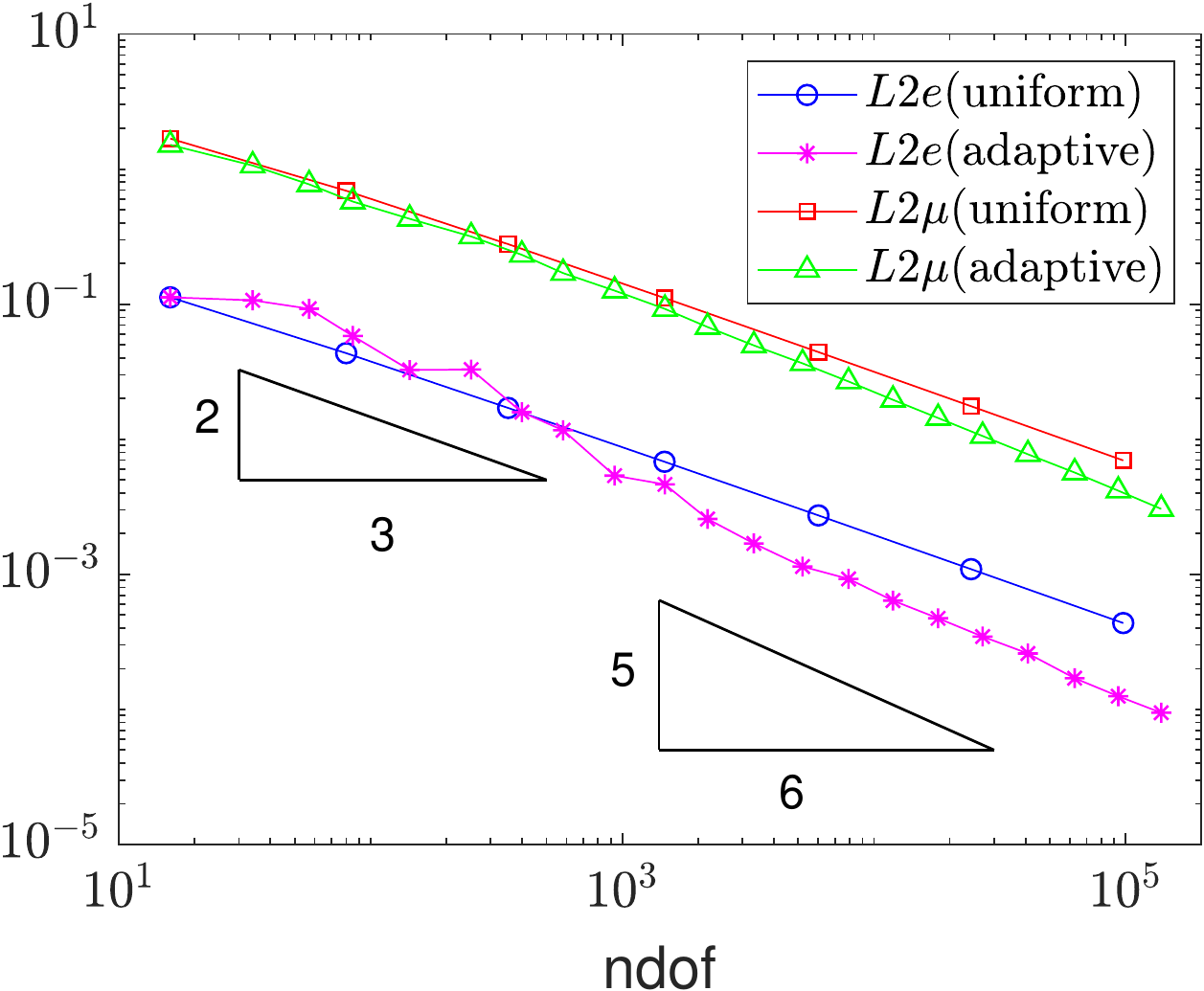}%{P2_err_est_L2}
			\caption{}
			\vspace{0.5cm}
		\end{subfigure}
		\caption{Convergence history plot of estimator $\mu$ and error $e:=u-\Pi_1u_h$  in the (a) piecewise $H^1$ norm  (b) $L^2$ norm vs number ndof of degrees of freedom  for both uniform and adaptive refinement}
		\label{fig5.4}
	\end{figure}

\subsection{Helmholtz equation }
	This subsection considers the exact solution $u=1+\tanh(-9(x^2+y^2-0.25))$ to the problem
	\begin{align*}
	-\Delta u-9 u=f\quad\quad \text{in}\quad\Omega=(-1,1)^2.
	\end{align*}
	 There is an internal layer around the circle centered at $(0,0)$ and of radius $0.25$ where the second derivatives of $u$ are large because of steep increase in the solution resulting in the large error at the beginning, and this gets resolved with refinement as displayed in Figure \ref{fig5.5}.  %The dashed line in the Figure \ref{fig5.6} (b) shows the decrease in $L^2$-error with uniform refinement 
	\begin{figure}[H]
		\centering
		\begin{subfigure}{.33\textwidth}
			\centering
			\includegraphics[width=0.8\linewidth]{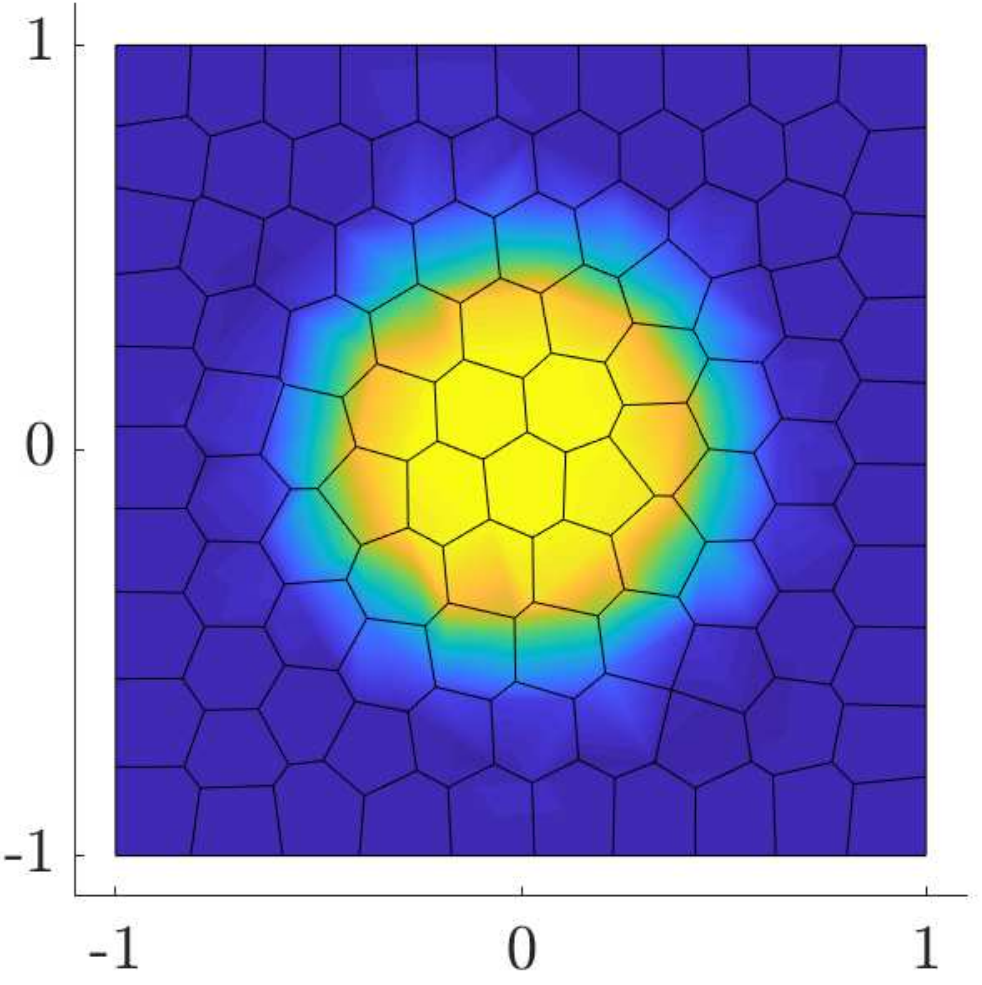}
		\end{subfigure}%
		\begin{subfigure}{.33\textwidth}
			\centering
			\includegraphics[width=0.8\linewidth]{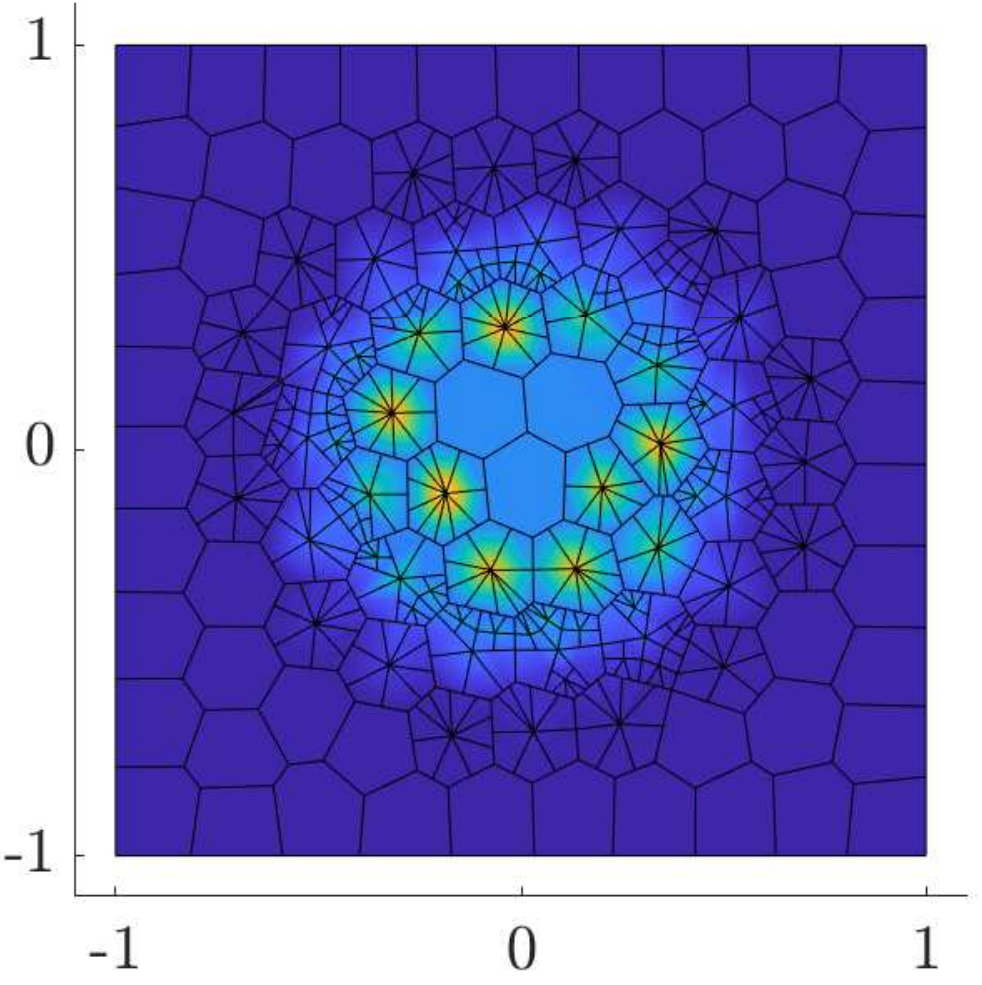}
		\end{subfigure}%
		\begin{subfigure}{.33\textwidth}
			\centering
			\includegraphics[width=0.8\linewidth]{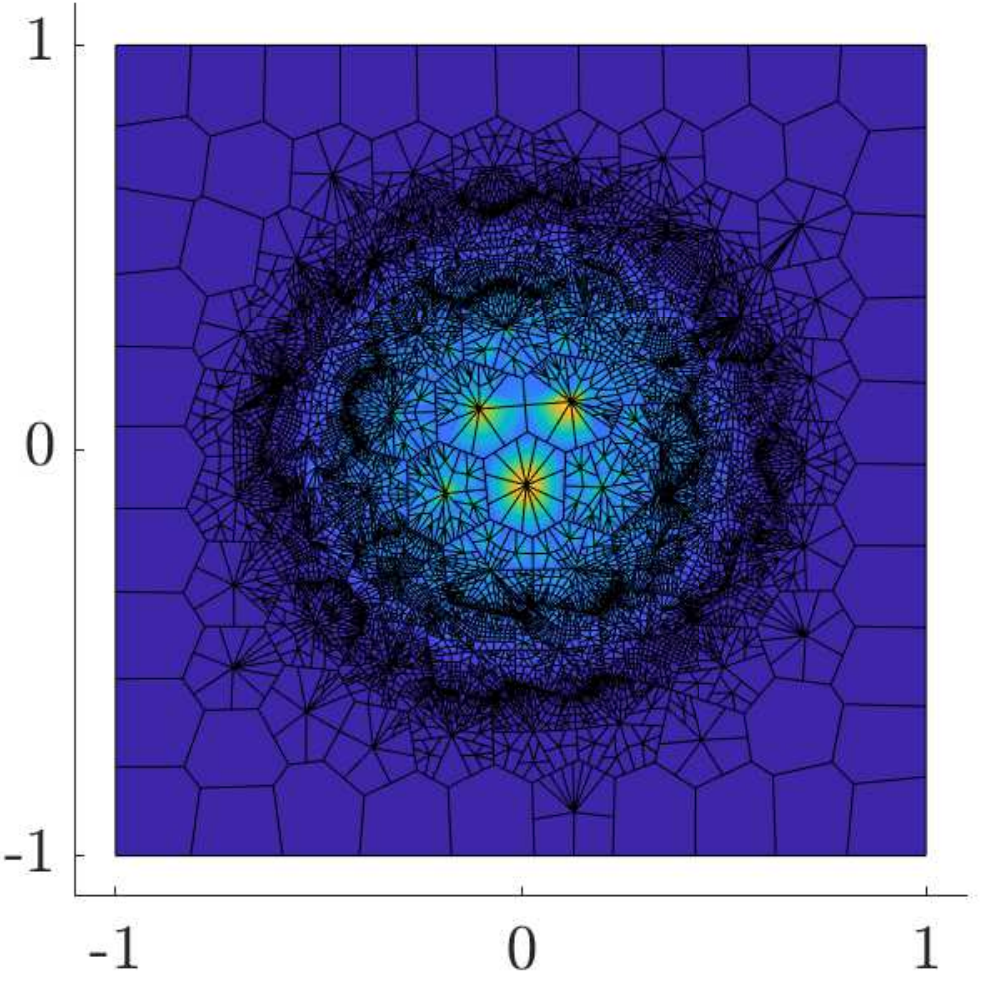}
		\end{subfigure}
		\caption{Output $\T_1, \T_5, \T_{11}$ of the adaptive refinement}
		\label{fig5.5}
	\end{figure}
	\begin{figure}[H]
		\centering
		\begin{subfigure}{.5\textwidth}
			\centering
			\includegraphics[width=0.8\linewidth]{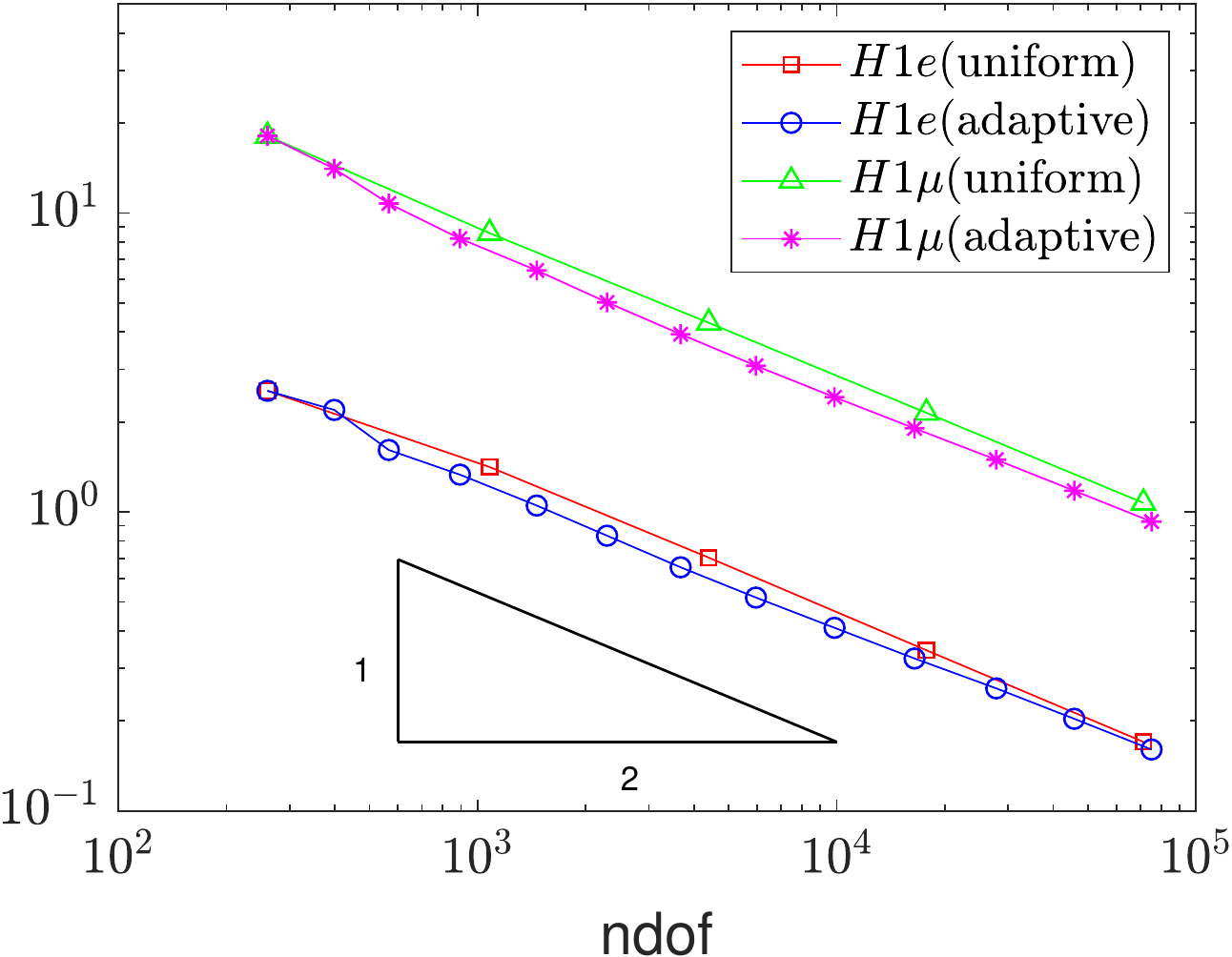}%{P3_err_est_no_osc}
			\caption{}
			\vspace{0.5cm}
		\end{subfigure}%
		\begin{subfigure}{.5\textwidth}
			\centering
			\includegraphics[width=0.8\linewidth]{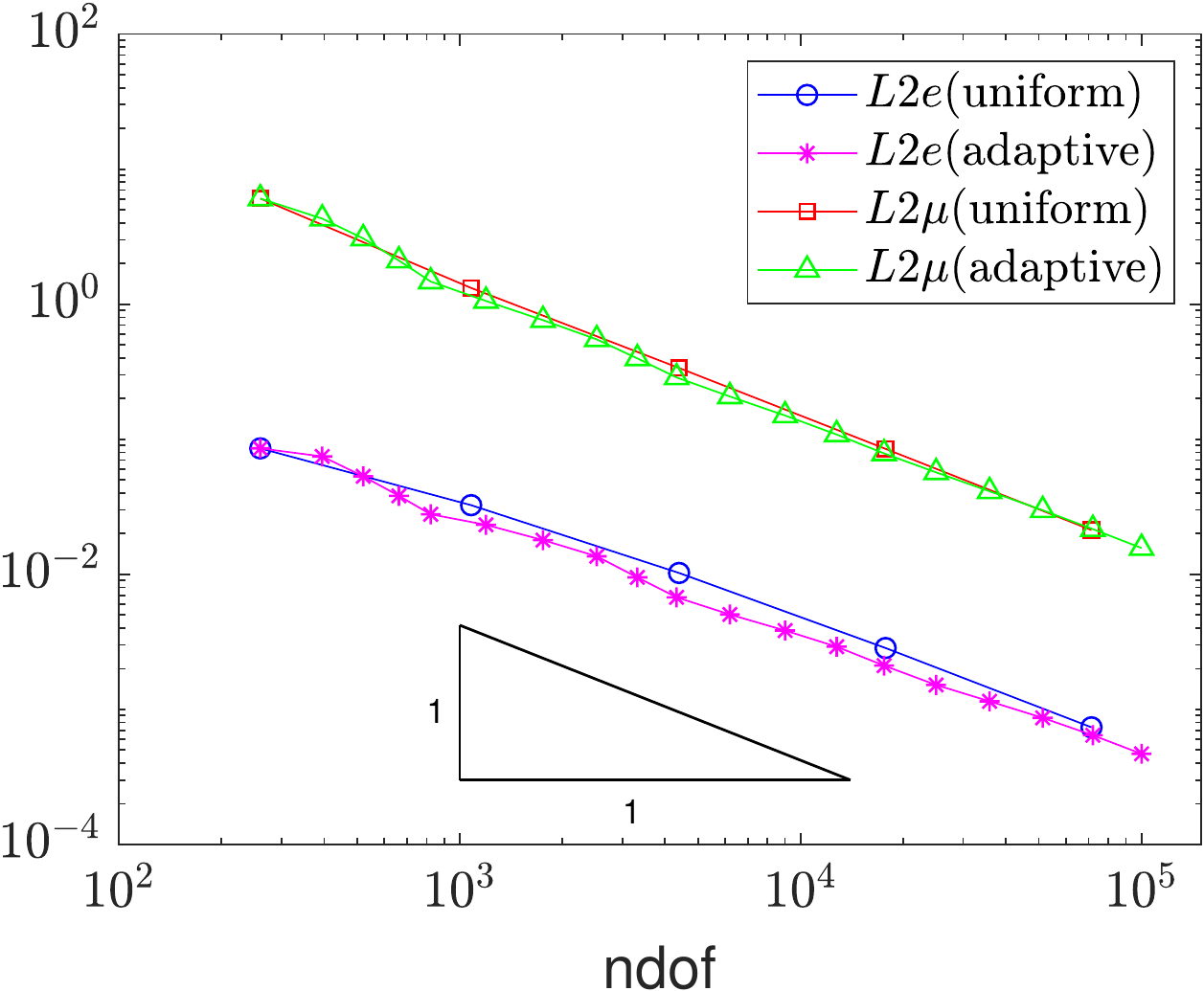}%{P3_err_est_L2}
			\caption{}
			%\label{P3-result2}
			\vspace{0.5cm}
		\end{subfigure}
		\caption{Convergence history plot of estimator $\mu$ and error $e:=u-\Pi_1u_h$  in the (a) piecewise $H^1$ norm  (b) $L^2$ norm vs number ndof of degrees of freedom  for both uniform and adaptive refinement}
		\label{fig5.6}
	\end{figure}
 
\subsection{Conclusion}
The three computational benchmarks provide empirical evidence for the sharpness of the mathematical a \textit{priori} and a \textit{posteriori} error analysis in this paper and illustrate the superiority of adaptive over uniform mesh-refining. The empirical convergence rates in all examples for the  $H^1$ and $L^2$ errors coincide with the predicted convergence rates in Theorem~\ref{h1}, in particular,  for the non-convex domain and reduced elliptic regularity. 
The a \textit{posteriori} error bounds from  Theorem~\ref{5.2} confirm these 
convergence rates as well. The ratio of the error estimator $\mu_\ell$ by the $H^1$ error $e_\ell$, sometimes called efficiency index, remains bounded up to  a typical value 6; we regard this as a typical overestimation factor for the residual-based a~posteriori error estimate. Recall that the constant $C_{\text{reg}}$ has not been displayed so the error estimator  $\mu_\ell$ does not provide a guaranteed error bound. Figure~ \ref{figl21} and \ref{figl22} display the four different contributions volume residual $(\sum_P\eta_P^2)^{1/2}$, stabilization term $(\sum_P\zeta_P^2)^{1/2}$, inconsistency term $(\sum_P\Lambda_P^2)^{1/2}$ and the nonconformity term $(\sum_P\Xi_P^2)^{1/2}$ that add up to the error estimator  $\mu_\ell$. We clearly see that all four terms converge with the overall rates that proves that none of them is a higher-order term and makes it doubtful that some of those terms can be neglected. The  volume residual clearly dominates the a \textit{posteriori} error estimates, while  the stabilisation term remains significantly smaller for the natural stabilisation (with undisplayed parameter one). The proposed adaptive mesh-refining algorithm leads to superior convergence properties and recovers the optimal convergence rates. This holds for the first example with optimal convergence rates in the large pre-asymptotic computational range as well as in the second with suboptimal convergence rates under uniform mesh-refining according to the typical corner singularity and optimal convergence rates for the adaptive mesh-refining. The third example with the Helmholtz equation and a moderate wave number shows certain moderate local mesh-refining in Figure 6.6 but no large improvement over the optimal convergence rates for uniform mesh-refining. The adaptive refinement generates hanging nodes because of the way refinement strategy is defined, but this is not troublesome in VEM setting as hanging node can be treated as a just another vertex in the decompostion of domain. However, an increasing number of hanging nodes with further mesh refinements   may violate the mesh assumption \ref{M2}, but numerically the method seems robust without putting any restriction on the number of hanging nodes. 
The future work on the theoretical investigation of the performance of adaptive mesh-refining algorithm is clearly motivated by the successful numerical experiments.  The aforementioned 
empirical observation  that the stabilisation terms do not dominate the 
a~posteriori error estimates raises  the hope for a possible  convergence analysis of the adaptive mesh-refining strategy with the axioms of adaptivity
\cite{CFPP} towards a proof of optimal convergence rates: The numerical results in this section support this conjecture at least
for the lowest-order VEM  in 2D for  indefinite non-symmetric second-order elliptic PDEs. 
\begin{figure}[H]
	\centering
	\begin{subfigure}{.33\textwidth}
		\centering
		\includegraphics[width=0.8\linewidth]{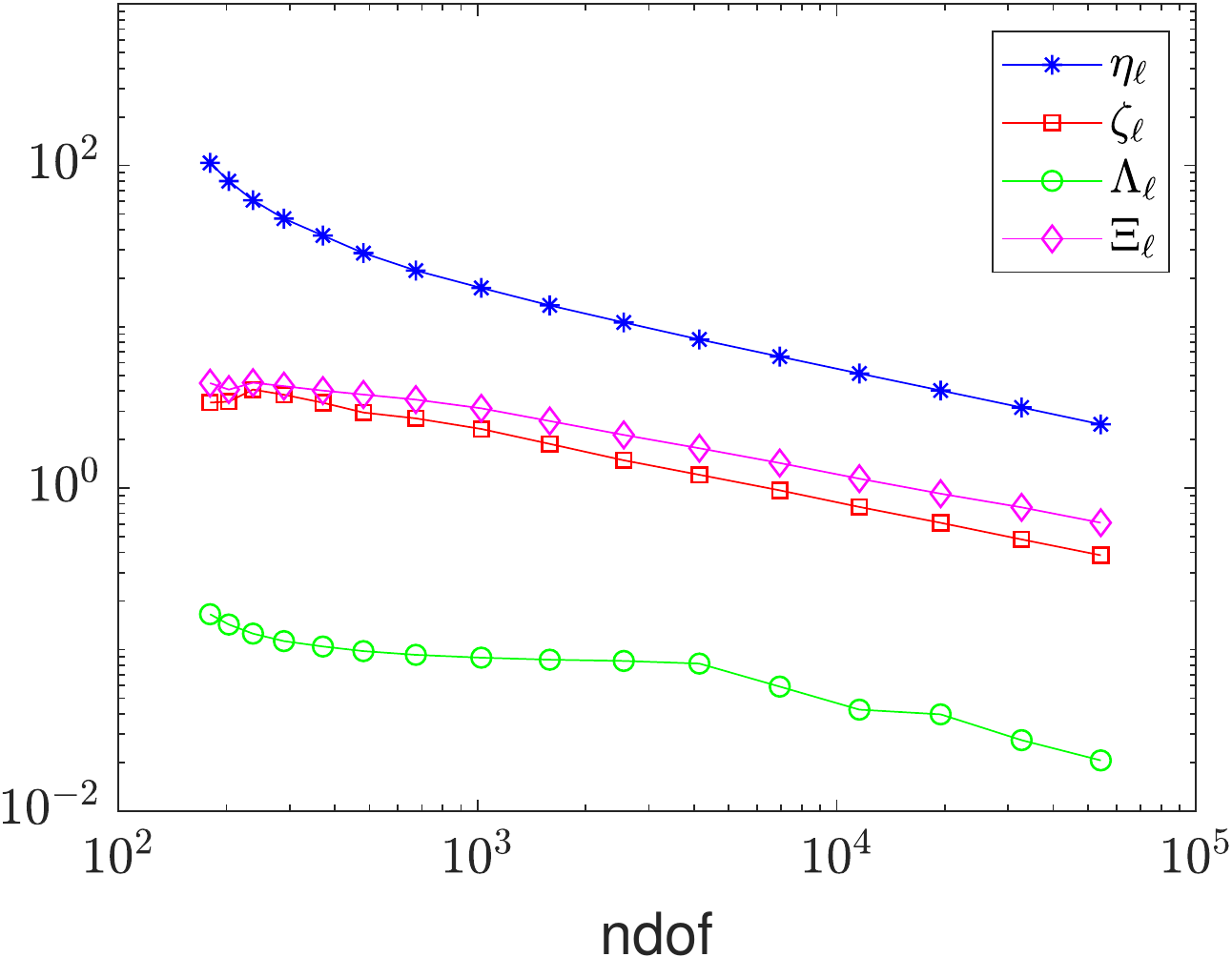}
	\end{subfigure}%
	\begin{subfigure}{.33\textwidth}
		\centering
		\includegraphics[width=0.8\linewidth]{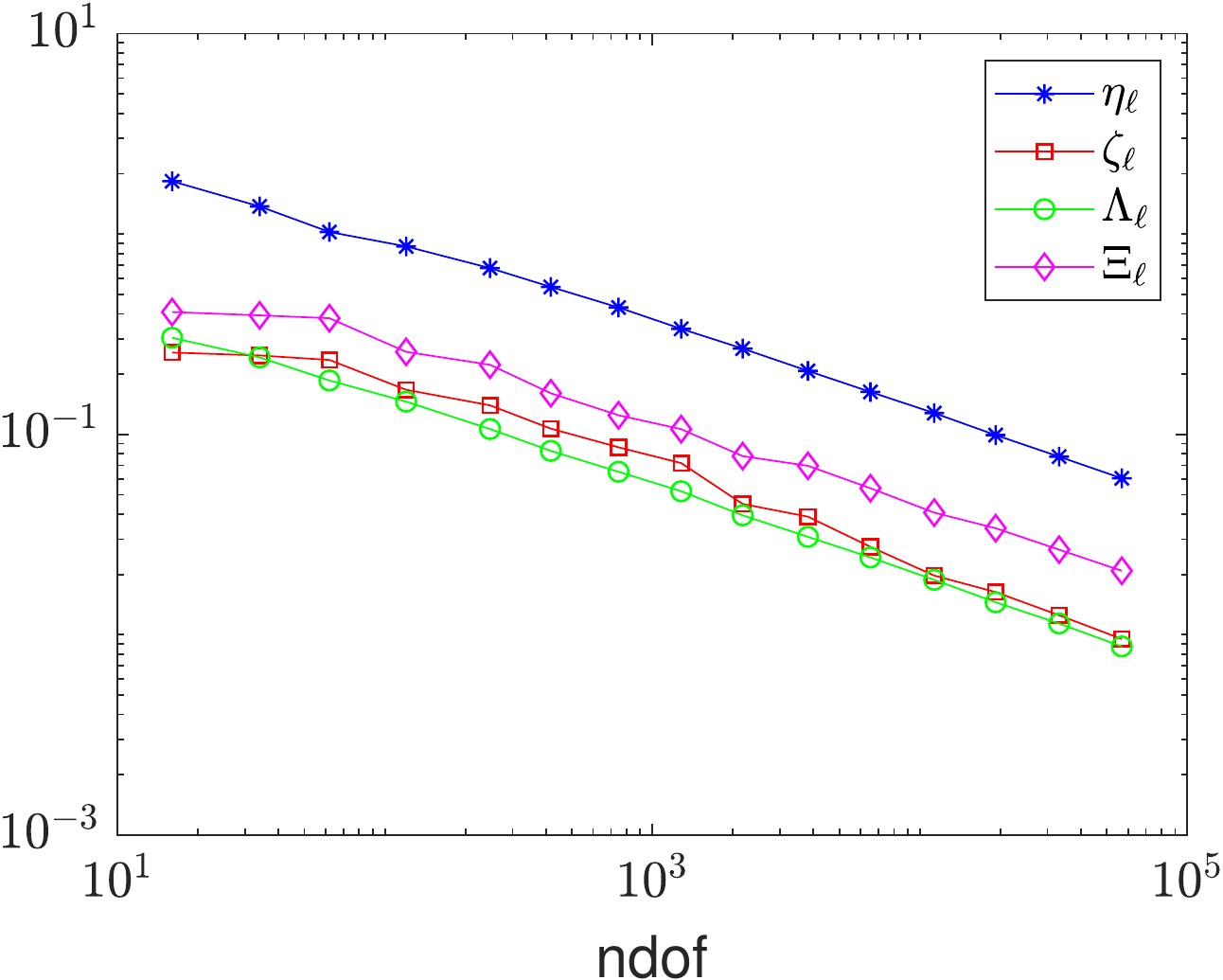}
	\end{subfigure}%
	\begin{subfigure}{.33\textwidth}
		\centering
		\includegraphics[width=0.8\linewidth]{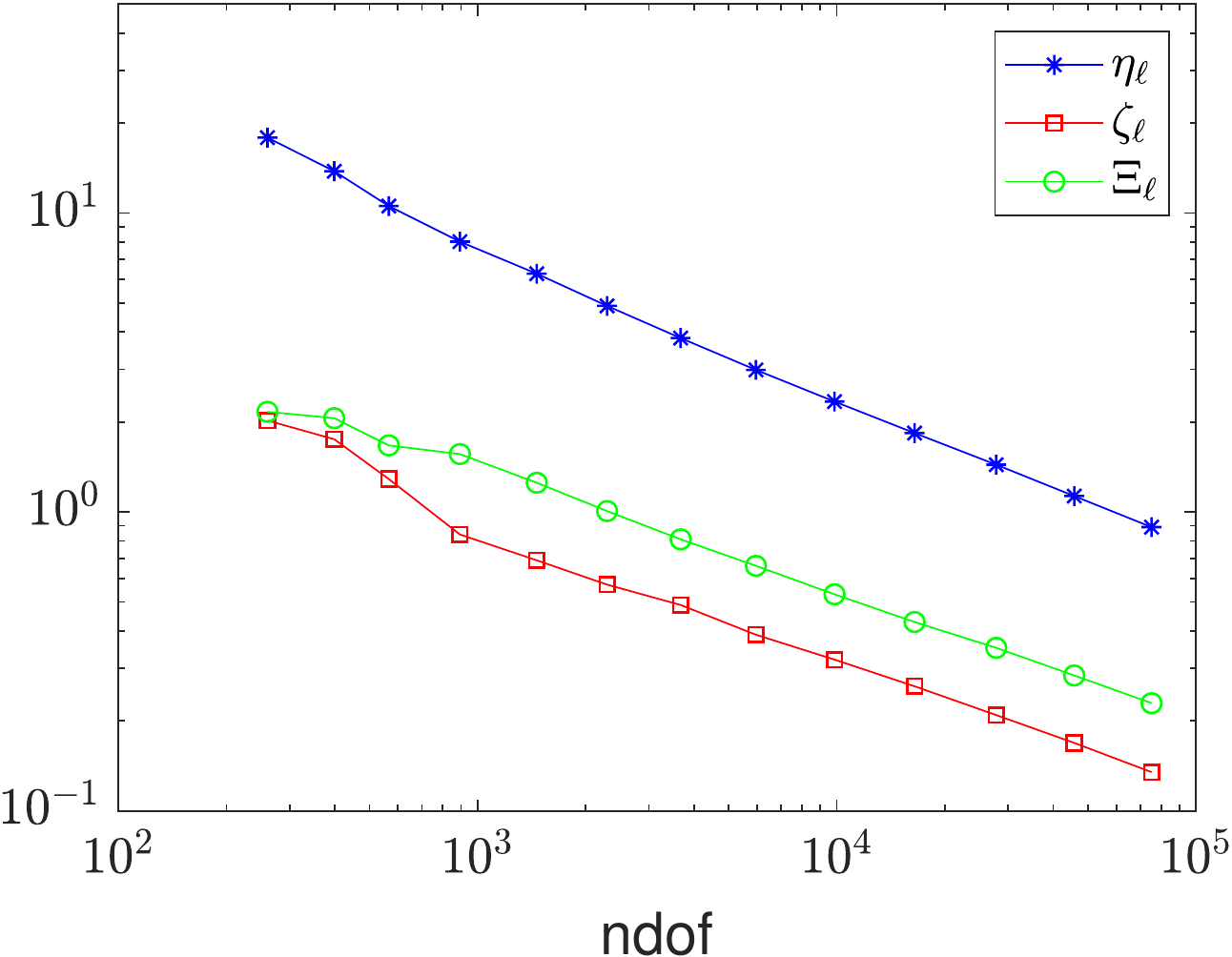}
	\end{subfigure}
	\caption{Estimator components corresponding to the error $H1e=|u-\Pi_1u_h|_{1,\pw}$ of the adaptive refinement presented in Subsection 6.2-6.4}
	\label{figl21}
\end{figure}
\begin{figure}[H]
	\centering
	\begin{subfigure}{.33\textwidth}
		\centering
		\includegraphics[width=0.8\linewidth]{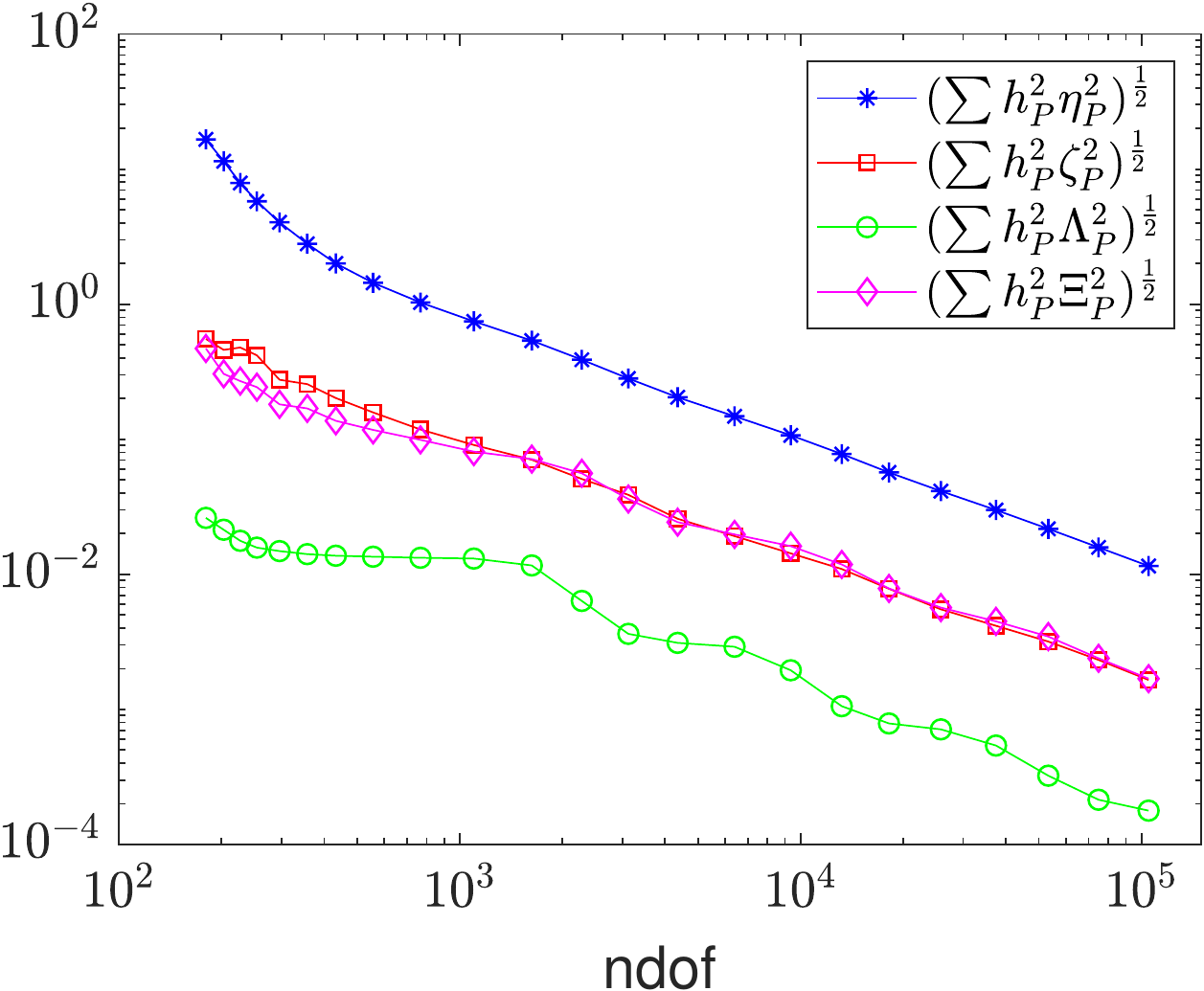}
	\end{subfigure}%
	\begin{subfigure}{.33\textwidth}
		\centering
		\includegraphics[width=0.8\linewidth]{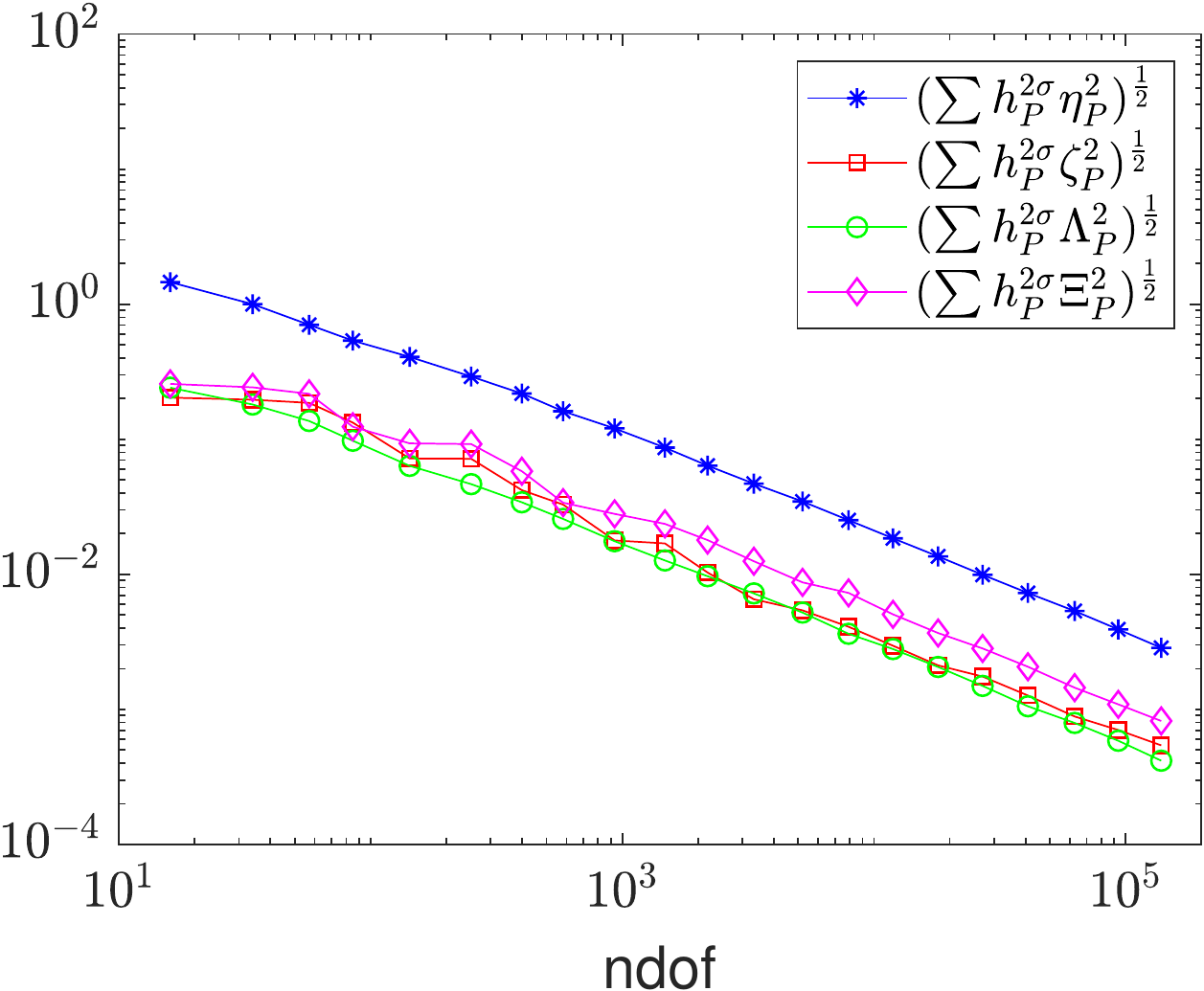}
	\end{subfigure}%
	\begin{subfigure}{.33\textwidth}
		\centering
		\includegraphics[width=0.8\linewidth]{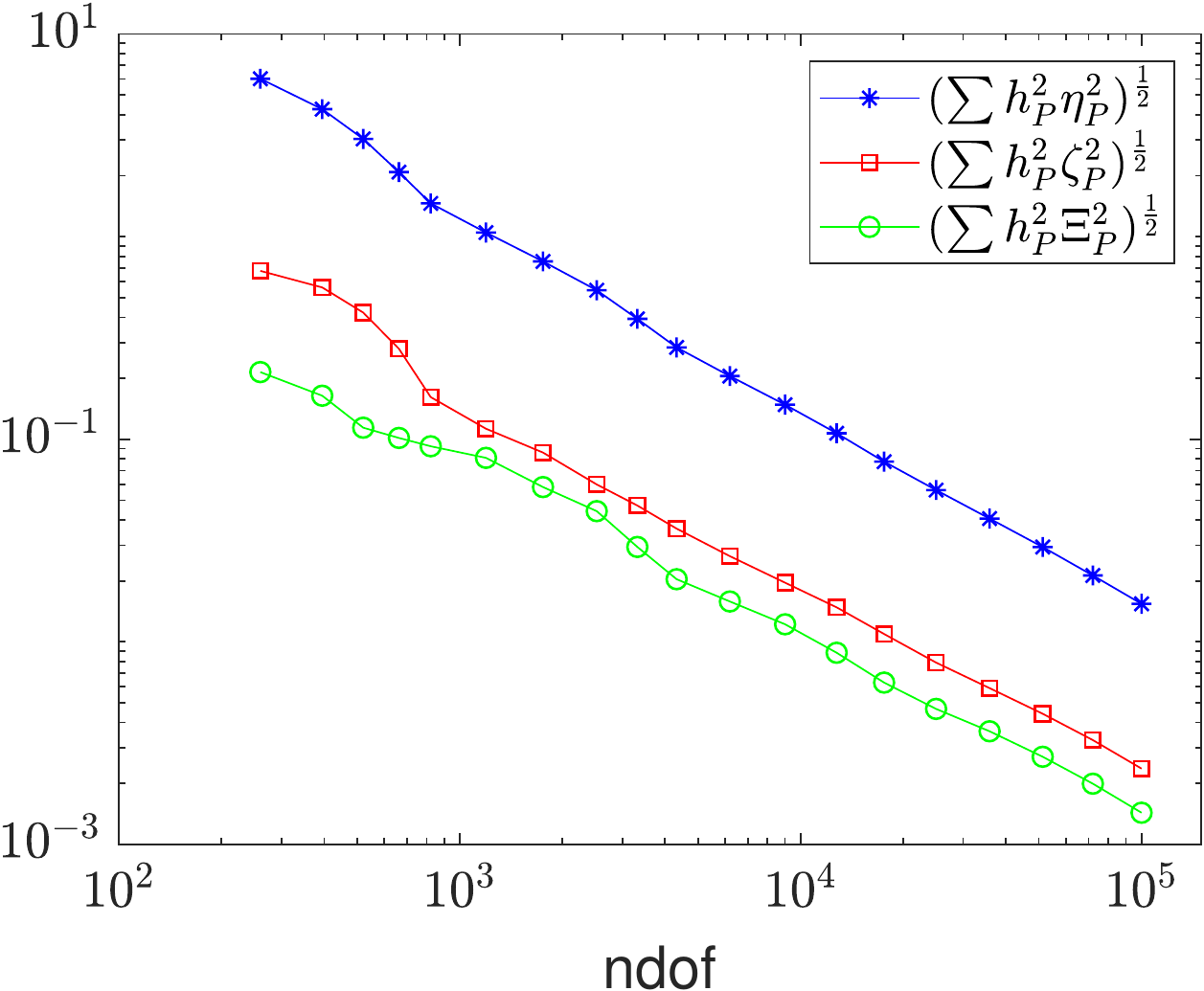}
	\end{subfigure}
	\caption{Estimator components corresponding to the error $L2e=\|u-\Pi_1u_h\|_{L^2(\Omega)}$ of the adaptive refinement presented in Subsection 6.2-6.4}
	\label{figl22}
\end{figure}
\textbf{Acknowledgements}  The authors sincerely thank one anonymous referee for  suggestions that led to Remark~5. The authors thankfully acknowlege the support from the MHRD SPARC project (ID 235) titled "The Mathematics and Computation of Plates" and the third author also thanks the hospitality of the Humboldt-Universit\"{a}t zu Berlin for the corresponding periods 1st July 2019-31st July 2019. The second author acknowledges the financial support of the University Grants Commission (UGC), Government of India.
\bibliographystyle{plainurl}
\bibliography{references}

\begin{thebibliography}{10}

\bibitem{9}
B.~Ahmad, A.~Alsaedi, F.~Brezzi, L.~D. Marini, and A.~Russo.
\newblock Equivalent projectors for virtual element methods.
\newblock {\em Comput. Math. Appl.}, 66(3):376--391, 2013.

\bibitem{15}
M.~Ainsworth and J.~T. Oden.
\newblock {\em A posteriori error estimation in finite element analysis},
  volume~37.
\newblock John Wiley \& Sons, 2011.

\bibitem{2}
B.~Ayuso~de Dios, K.~Lipnikov, and G.~Manzini.
\newblock The nonconforming virtual element method.
\newblock {\em ESAIM: M2AN}, 50(3):879--904, 2016.

\bibitem{1}
L.~Beir{\~a}o~da Veiga, F.~Brezzi, A.~Cangiani, G.~Manzini, L.~D. Marini, and
  A.~Russo.
\newblock Basic principles of virtual element methods.
\newblock {\em Math. Models Methods Appl. Sci.}, 23(01):199--214, 2013.

\bibitem{3}
L.~Beir{\~a}o~da Veiga, F.~Brezzi, L.~D. Marini, and A.~Russo.
\newblock The hitchhiker's guide to the virtual element method.
\newblock {\em Math. Models Methods Appl. Sci.}, 24(08):1541--1573, 2014.

\bibitem{4}
L.~Beir{\~a}o~da Veiga, F.~Brezzi, L.D. Marini, and A.~Russo.
\newblock Virtual element method for general second-order elliptic problems on
  polygonal meshes.
\newblock {\em Math. Models Methods Appl. Sci.}, 26(04):729--750, 2016.

\bibitem{da2014mimetic}
L.~Beir{\~a}o~da Veiga, K.~Lipnikov, and G.~Manzini.
\newblock {\em The mimetic finite difference method for elliptic problems},
  volume~11.
\newblock Springer, 2014.

\bibitem{beirao2017stability}
L.~Beir{\~a}o~da Veiga, C.~Lovadina, and A.~Russo.
\newblock Stability analysis for the virtual element method.
\newblock {\em Math. Models Methods Appl. Sci.}, 27(13):2557--2594, 2017.

\bibitem{da2015residual}
L.~Beir{\~a}o~da Veiga and G.~Manzini.
\newblock Residual a posteriori error estimation for the virtual element method
  for elliptic problems.
\newblock {\em ESAIM: M2AN}, 49(2):577--599, 2015.

\bibitem{binev2004adaptive}
P.~Binev, W.~Dahmen, and R.~DeVore.
\newblock Adaptive finite element methods with convergence rates.
\newblock {\em Numer. Math.}, 97(2):219--268, 2004.

\bibitem{braess2007finite}
D.~Braess.
\newblock {\em Finite elements: Theory, fast solvers, and applications in solid
  mechanics}.
\newblock Cambridge University Press, 2007.

\bibitem{brenner2015forty}
S.~Brenner.
\newblock Forty years of the {C}rouzeix-{R}aviart element.
\newblock {\em Numer. Methods Partial Differ. Equ.}, 31(2):367--396, 2015.

\bibitem{14}
S.~Brenner, Q.~Guan, and L.-Y. Sung.
\newblock Some estimates for virtual element methods.
\newblock {\em Comput. Methods Appl. Math.}, 17(4):553--574, 2017.

\bibitem{7}
S.~Brenner and R.~Scott.
\newblock {\em The mathematical theory of finite element methods}, volume~15.
\newblock Springer Science \& Business Media, New York, 2007.

\bibitem{brenner2018virtual}
S.~Brenner and L.~Sung.
\newblock Virtual element methods on meshes with small edges or faces.
\newblock {\em Math. Models Methods Appl. Sci.}, 28(07):1291--1336, 2018.

\bibitem{6}
A.~Cangiani, E.~H. Georgoulis, T.~Pryer, and O.~J. Sutton.
\newblock A posteriori error estimates for the virtual element method.
\newblock {\em Numer. Math.}, 137(4):857--893, 2017.

\bibitem{5}
A.~Cangiani, G.~Manzini, and O.~J. Sutton.
\newblock Conforming and nonconforming virtual element methods for elliptic
  problems.
\newblock {\em IMA J. Numer. Anal.}, 37(3):1317--1354, 2016.

\bibitem{cao2019anisotropic}
S.~Cao and L.~Chen.
\newblock Anisotropic error estimates of the linear nonconforming virtual
  element methods.
\newblock {\em SIAM J. Numer. Anal.}, 57(3):1058--1081, 2019.

\bibitem{11}
C.~Carstensen, A.~K. Dond, N.~Nataraj, and A.~K. Pani.
\newblock Error analysis of nonconforming and mixed fems for second-order
  linear non-selfadjoint and indefinite elliptic problems.
\newblock {\em Numer. Math.}, 133(3):557--597, 2016.

\bibitem{CFPP}
C.~Carstensen, M.~Feischl, M.~Page, and D.~Praetorius.
\newblock Axioms of adaptivity.
\newblock {\em Comput. Math. Appl.}, 67(6):1195--1253, 2014.

\bibitem{cc2}
C.~Carstensen and D.~Gallistl.
\newblock Guaranteed lower eigenvalue bounds for the biharmonic equation.
\newblock {\em Numer. Math.}, 126(1):33--51, 2014.

\bibitem{12}
C.~Carstensen, D.~Gallistl, and M.~Schedensack.
\newblock Adaptive nonconforming {C}rouzeix-{R}aviart {FEM} for eigenvalue
  problems.
\newblock {\em Math. Comp.}, 84(293):1061--1087, 2015.

\bibitem{cc1}
C.~Carstensen and J.~Gedicke.
\newblock Guaranteed lower bounds for eigenvalues.
\newblock {\em Math. Comp.}, 83(290):2605--2629, 2014.

\bibitem{carstensen2012explicit}
C.~Carstensen, J.~Gedicke, and D.~Rim.
\newblock Explicit error estimates for {Courant, Crouzeix-Raviart and
  Raviart-Thomas} finite element methods.
\newblock {\em J. Comput. Math.}, 30(4):337--353, 2012.

\bibitem{8}
C.~Carstensen and F.~Hellwig.
\newblock Constants in discrete {P}oincar{\'e} and {F}riedrichs inequalities
  and discrete quasi-interpolation.
\newblock {\em Comput. Methods Appl. Math.}, 18(3):433--450, 2018.

\bibitem{carstensen2018prove}
C.~Carstensen and S.~Puttkammer.
\newblock How to prove the discrete reliability for nonconforming finite
  element methods.
\newblock {\em arXiv preprint arXiv:1808.03535}, 2018.

\bibitem{cascon2008quasi}
J.~M. Cascon, C.~Kreuzer, R.~H. Nochetto, and K.~G. Siebert.
\newblock Quasi-optimal convergence rate for an adaptive finite element method.
\newblock {\em SIAM J. Numer. Anal.}, 46(5):2524--2550, 2008.

\bibitem{ciarlet1978finite}
P.~G. Ciarlet.
\newblock {\em The finite element method for elliptic problems}.
\newblock North-Holland, 1978.

\bibitem{10}
T.~Dupont and R.~Scott.
\newblock Polynomial approximation of functions in {S}obolev spaces.
\newblock {\em Math. Comp.}, 34(150):441--463, 1980.

\bibitem{Evans}
L.~C. Evans.
\newblock {\em Partial differential equations}, volume~19.
\newblock American Mathematical Society, Providence, RI, second edition, 2010.

\bibitem{HUANG2021113229}
J.~Huang and Y.~Yu.
\newblock A medius error analysis for nonconforming virtual element methods for
  {P}oisson and biharmonic equations.
\newblock {\em J. Comput. Appl. Math.}, 386, 2021.
\newblock \href {http://dx.doi.org/https://doi.org/10.1016/j.cam.2020.11322}
  {\path{doi:https://doi.org/10.1016/j.cam.2020.11322}}.

\bibitem{dG}
O.~A. Karakashian and F.~Pascal.
\newblock A posteriori error estimates for a discontinuous {G}alerkin
  approximation of second-order elliptic problems.
\newblock {\em SIAM J. Numer. Anal.}, 41(6):2374--2399, 2003.

\bibitem{17}
K.~Kim.
\newblock A posteriori error analysis for locally conservative mixed methods.
\newblock {\em Math. Comp.}, 76(257):43--66, 2007.

\bibitem{mora2015virtual}
D.~Mora, G.~Rivera, and R.~Rodr{\'\i}guez.
\newblock A virtual element method for the {S}teklov eigenvalue problem.
\newblock {\em Math. Models Methods Appl. Sci.}, 25(08):1421--1445, 2015.

\bibitem{product}
A.~Sommariva and M.~Vianello.
\newblock Product {G}auss cubature over polygons based on {G}reen's integration
  formula.
\newblock {\em BIT Numer. Math.}, 47(2):441--453, 2007.

\bibitem{sutton2017virtual}
O.~J. Sutton.
\newblock {\em Virtual element methods}.
\newblock PhD thesis, University of Leicester, 2017.

\bibitem{16}
R.~Verf{\"u}rth.
\newblock {\em A review of a posteriori error estimation and Adaptive
  Mesh-Refinement Techniques}.
\newblock {W}iley-{T}eubner, New York, 1996.

\end{thebibliography}
%\nocite{*}
\end{document}